\newcommand{\reals}{\ensuremath{\mathbb{R}}}
\newcommand{\norm}[1]{\ensuremath{\left\vert\left\vert#1\right\vert\right\vert}}
\newcommand{\flow}{\ensuremath{\mathcal{F}}}
\newcommand{\uT}[1]{\ensuremath{T_1 #1}}
\newcommand{\genflow}{\ensuremath{\mathfrak F}}
\newcommand{\pr}{\ensuremath{\textrm{pr}}}
\newcommand{\z}[2]{\ensuremath{\mathcal{Z}_{#1}^{(#2)}}}
\newcommand{\zn}{\ensuremath{\z{n}{N}}}
\newcommand{\zN}[1]{\ensuremath{\z{#1}{N}}}
\newcommand{\sT}{\ensuremath{\mathcal{T}}}
\newcommand{\TS}{\ensuremath{T^+\partial S}}
\newcommand{\TTS}{\ensuremath{\TS\backslash (\cup_i\Xi_i)}}
\newcommand{\TTTS}{\ensuremath{\TS\backslash (\cup_i\Xi_i\cup\Trap{\partial S}^{(K)})}}
\newcommand{\Trap}[1]{\ensuremath{\textrm{Trap}(#1)}}
\newcommand{\mindist}{\ensuremath{d_{\min}}}
\newcommand{\minextra}{\ensuremath{d^*}}
\newcommand{\maxdist}{\ensuremath{D}}
\newcommand{\maxsec}{\ensuremath{sec_{\max}}}
\newcommand{\mincurv}{\ensuremath{\kappa_{\min}}}
\newcommand{\minref}{\ensuremath{\xi}}
\newcommand{\globalcurv}{\ensuremath{\Theta}}
\newcommand{\frontmincurv}{\ensuremath{k_{\min}}}
\newcommand{\comment}[1]{}
\newtheorem{theorem}{Theorem}
\newtheorem{proposition}{Proposition}
\newtheorem{lemma}[proposition]{Lemma}
\newtheorem{corollary}{Corollary}[proposition]
\theoremstyle{definition}
\crefname{condition}{condition}{conditions}
\title{Rigidity of Travelling Times for Strictly Convex Obstacles in Riemannian Manifolds}
\author{Tal Gurfinkel\footnote{This research is supported by an Australian Government Research Training Program (RTP) Scholarship.}\ \footnote{tal.gurfinkel@research.uwa.edu.au}\ , Lyle Noakes\footnote{lyle.noakes@uwa.edu.au}\ , Luchezar Stoyanov\footnote{luchezar.stoyanov@uwa.edu.au}\\\\ Department of Mathematics and Statistics, \\University of Western Australia, Crawley, 6009, WA, Australia}
\begin{document}

\maketitle

\begin{abstract}
\noindent
	Let $K$ and $L$ be two disjoint unions of strictly convex obstacles contained within a Riemannian manifold with boundary $S$ of dimension $m\geq 2$. The sets of travelling times $\sT_K$ and $\sT_L$ of $K$ and $L$, respectively, are composed of triples $(x,y,t)\in\partial S\times\partial S\times\reals^+$ where $t$ is the length of a billiard trajectory with endpoints $x$ and $y$ that reflects elastically on $K$ (or $L$ for $(x,y,t)\in\sT_L$). In \cite{GNS2023Preprint1} it was shown that (under some natural curvature bounds on $S$) if $\sT_K=\sT_L$ and $K$ and $L$ were equivalent up to tangency then $K = L$. In this paper we remove this requirement for $K$ and $L$, and show that if $\sT_K = \sT_L$ then $K = L$ whenever $m\geq 3$.
\end{abstract}

\begin{quote}
	{\bf Keywords:} Travelling Times, Inverse Scattering, Convex Obstacles, Riemannian Manifolds, Scattering Rigidity.
\end{quote}

\begin{quote}
	{\bf MSC Classification (2020):} 37D40, 37C83, 53C21.
\end{quote}

\noindent
Classically, mathematical billiards are dynamical systems defined by the motion of a point particle travelling along piecewise paths within a submanifold $\Omega\subseteq\reals^m$ with boundary of Euclidean space. The particle's motion is a straight line within the interior of $\Omega$ and upon contact with $\partial\Omega$ it satisfies the classical law of reflection ``the angle of incidence is equal to the angle of reflection". In this paper we will deal with dispersing billiards (i.e. billiards with strictly convex boundary), which were originally studied in the plane by Sinai (\cite{MR0274721}). Dispersing billiards are one of many billiard systems which exhibit chaotic behaviour. Chaotic billiard systems have been studied extensively since the introduction of the Sinai billiard (cf. \cite{CHERNOVCHAOTICBILLIARDS}). One can define billiards on a Riemannian manifold with boundary in an analogous way, by requiring the motion of the system to follow along geodesics and satisfy the same law of elastic reflection (see e.g. \cite{MR807598}). We shall study an inverse problem in billiards on a Riemannian manifold with boundary $S$, containing strictly convex obstacles in its interior. We consider the boundaries of the obstacles as the boundary on which the billiard rays reflect, while the rays do not reflect on the boundary of the exterior manifold $S$. This ensures a finite horizon, i.e. a maximal length for any free ray. Other similar inverse problems include rigidity problems concerning the marked length spectrum, the broken X-ray transform and more \cite{MR3990606, MR3514564, MR4276284, GEOMETRICINVERSEPROBLEMS}.
The set $\sT_K$ of travelling times of an obstacle $K$ is composed of the lengths and endpoints of billiard trajectories reflecting elastically on $K$ which begin and end on the boundary of a manifold $S$ containing $K$. In \cite{GNS2023Preprint1} it was shown that for pairs $(K,L)$ of unions of disjoint strictly convex obstacles which are equivalent up to tangency\footnote{$K$ and $L$ are said to be \emph{equivalent up to tangency} if for any inward vector $\sigma$ of $\partial S$ the billiard trajectories generated by $\sigma$ will coincide for $K$ and $L$ up until a first point of tangency with $\partial K$ or $\partial L$ respectively.}, if $\sT_K=\sT_L$ then $K=L$. Note that the class of pairs of obstacles which are equivalent up to tangency is non-trivial, in fact it contains those pairs which satisfy Ikawa's non-eclipse condition \cite{MR949013} (or equivalently the general position condition) and have the same set of travelling times. Nevertheless, there are many configurations of strictly convex obstacles which are not known to satisfy these conditions. 
In this paper we generalise the uniqueness result of \cite{GNS2023Preprint1}, removing the so-called equivalence up to tangency condition for unions of strictly convex disjoint obstacles on Riemannian manifolds of dimension greater than 2. To do so we make direct use of the results in \cite{GNS2023Preprint1}, as well as adapting some of the arguments made in \cite{MR3359579} in $\reals^m$ $(m>2)$ to our more general Riemannian setting. 

Let $M$ be a complete $m$-dimensional Riemannian manifold ($m\geq 2$). We denote the metric on $M$ by $\langle\ \cdot,\cdot \ \rangle$, and the sectional curvature of $M$ by $sec_M(X,Y)$ for any $X,Y\in TM$. 
We will say that an $m$-dimensional submanifold $M'\subseteq M$ with boundary is strictly convex if at every point of $\partial M'$ the principal curvatures of $\partial M$ are all positive (with respect to the outward unit normal).
Let $S$ be a strictly convex $m$-dimensional submanifold of $M$ with boundary. We shall assume that every pair of points in $S$ can be connected by a unique minimal geodesic in $S$. Denote the unit tangent bundle of $S$ by $\uT{S}$, and the inward unit normal field of $\partial S$ as $N_S$. We define the following subset of inward pointing vectors along the boundary of $S$, 
\[
	\TS = \{\sigma\in \uT{S} : \pr_1(\sigma)\in\partial S \textrm{ and } \langle \sigma, N_S \rangle > 0\}.
\]
Suppose that $K$ is a union\footnote{We ought to note when $d=1$, (i.e. $K$ is a single obstacle) the travelling times of any billiard ray in $S_K$ is finite, and showing the uniqueness of $K$ from $\sT_K$ becomes trivial.} $K = K_1 \cup \dots\cup K_d$ of $d\geq 1$ disjoint $m$-dimensional, strictly convex submanifolds $K_i$ of $S$ with smooth boundary. Let $S_K = \overline{S\backslash K}$, we denote the (smooth) geodesic flow induced by $M$ as $\flow$ and the billiard flow induced by $S_K$ as $\genflow^K$. Denote the unit tangent bundle of $S_K$ by $\uT{S_K}$, and for any $\sigma\in\uT{S_K}$ let $\gamma^+_K(\sigma) = \{\pr_1\circ\genflow_t^K(\sigma):t\geq 0\}$.
We also let $\gamma^+_K(\sigma)(t) = \pr_1\circ\genflow_t^K(\sigma)$. If $\sigma\in\TS$ and there is some $t^*>0$ such that $\gamma_K^+(\sigma)(t^*)\in\partial S$ (i.e. $\gamma_K^+(\sigma)$ intersects the boundary $\partial S$ twice) then we say that $\gamma_K^+(\sigma)$ is \emph{non-trapped}. If no such $t^*$ exists we say that $\gamma_K^+(\sigma)$ is trapped. Denote the set of trapped points $\sigma\in\TS$ which generate a trapped ray $\gamma_K^+(\sigma)$ by $\Trap{\partial S}^{(K)}$.
The set $\sT_K$ of \emph{travelling times} of $K$ is defined to be the following,
\begin{align*}
	\sT_K = \{&(\gamma_K^+(\sigma)(0),\gamma_K^+(\sigma)(t^*),t^*)\in\partial S\times\partial S\times\reals^+:
	\textrm{ for some }\sigma\in\TS\setminus\Trap{\partial S}^{(K)},\\
	&\textrm{ and } t^* \textrm{ is the minimum number } t \textrm{ such that } \gamma_K^+(\sigma)(t)\in\partial S \}.
\end{align*}
Let $L$ also be a union $L = L_1 \cup \dots\cup L_{d'}$ of $d'\geq 1$ disjoint $m$-dimensional, strictly convex submanifolds $L_i$ of $S$ with smooth boundary. Suppose that $K$ and $L$ have the same travelling times, i.e $\sT_K = \sT_L$. Denote the minimum distance between any two distinct components in $K$ by $\mindist^K$ and in $L$ by $\mindist^L$. Set $\mindist = \min\{\mindist^K,\mindist^L\}$, and let $D$ be the diameter of $S$. 

\begin{proposition}[\cite{GNS2023Preprint1}]\label{proposition:convex_front_from_obstacle}
	Fix a point $x_0\in \partial K$ and tangent direction $V\in T_{x_0}\partial K$ such that $\norm{V} = 1$. There is a neighbourhood $U\subseteq \partial K$ of $x_0$ and a strictly convex front $Y$ such that for every $y\in Y$, the ray in the inward normal direction from $y$ will meet $U$ tangentially. Hence, the front $Y$ is diffeomorphic to $U$. Furthermore, there is a global lower bound $\globalcurv_0>0$ such that the minimum principal curvature of $Y$ is greater than $\globalcurv_0$ for all $x_0\in \partial K$ and $V\in T_{x_0}\partial K$. 
\end{proposition}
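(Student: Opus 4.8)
The plan is to build $Y$ as the wave-front ``behind'' an $(m-1)$-parameter family of geodesics that graze $\partial K$ near $x_0$, and then to get its convexity and the uniform lower curvature bound from strict convexity of $\partial K$ by a Riccati/Jacobi estimate. Since $K\subseteq\mathrm{int}\,S$, near $x_0$ a billiard trajectory is just an $M$-geodesic; write $\exp$ for the exponential map of $M$, and note that $t\mapsto\exp_{x_0}(tV)$ is tangent to $\partial K$ at $x_0$ (as $V\in T_{x_0}\partial K$) and stays in $S_K$ for small $|t|$ by strict convexity.

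\textbf{Step 1 (constructing $Y$).} I would fix, on a small $U\subseteq\partial K$ around $x_0$, a smooth eikonal function $f$ on $\partial K$ (unit intrinsic gradient) with $f(x_0)=0$ and $\nabla_{\partial K}f(x_0)=-V$ — e.g.\ the signed intrinsic distance to a hypersurface of $\partial K$ through $x_0$ orthogonal to $V$ — and set $V_x=-\nabla_{\partial K}f(x)$ (unit, tangent to $\partial K$, equal to $V$ at $x_0$) and $\gamma_x(t)=\exp_x(tV_x)$, which grazes $\partial K$ at $x$. For a small fixed $\ell>0$ I would then define
\[
 Y:=\{\,y_x:=\gamma_x(f(x)-\ell):x\in U\,\},\qquad n(y_x):=\dot\gamma_x(f(x)-\ell),
\]
and check that this is a front with unit normal $n$, diffeomorphic to $U$: for the canonical $1$-form $\alpha$ of $\uT{S}$ (for which $\flow_t^*\alpha=\alpha$), the map $x\mapsto(y_x,n(y_x))=\flow_{f(x)-\ell}(x,V_x)$ pulls $\alpha$ back to $X\mapsto df(X)+\langle V_x,X\rangle=df(X)-df(X)=0$ on $TU$, so $TY$ is $\alpha$-isotropic, i.e.\ $n\perp TY$ and $|n|\equiv1$; for $\ell$ and $U$ small, $x\mapsto y_x$ is an embedding (its differential at $x_0$ is a small perturbation of the inclusion $T_{x_0}\partial K\hookrightarrow T_{y_{x_0}}M$), so $Y$ is an embedded hypersurface and $y_x\mapsto x$ is a diffeomorphism $Y\to U$; and since $\flow_t(y_x,n(y_x))=(\gamma_x(t+f(x)-\ell),\dot\gamma_x(t+f(x)-\ell))$, the forward normal geodesic from $y_x\in Y$ reaches $x\in U$ tangentially to $\partial K$ at time $\ell-f(x)>0$.

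\textbf{Step 2 (convexity and the uniform bound).} The principal curvatures of $Y$ with respect to $n$ I would compute from Jacobi fields along the $\gamma_x$ between $y_x$ and the grazing point $\gamma_x(f(x))$; equivalently, the shape operator of the equidistant fronts through $Y$ solves a Riccati-type equation along $\gamma_x$ whose curvature term is bounded by the curvature bounds assumed for $S$, with data at the grazing end given by the second fundamental form of the strictly convex $\partial K$ and $\mathrm{Hess}_{\partial K}f$, plus a term blowing up in the grazing direction (there the equidistant front genuinely acquires a singular point). I would take $\ell$ below the first focal time permitted by the curvature bounds and small enough that each $\gamma_x([f(x)-\ell,f(x)])$ lies in $S_K$; then transporting the grazing data back the fixed distance $\ell$ yields a positive definite shape operator on $Y$ — in the two-dimensional model $Y$ is simply an involute of the boundary curve, of curvature $\approx1/\ell$, and in higher dimensions strict convexity of $\partial K$ is precisely what preserves positivity under the transport. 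Finally, compactness of $\partial K$ confines its principal curvatures to a fixed interval $[\kappa_0,\kappa_1]\subset(0,\infty)$, so the resulting lower bound on the principal curvatures of $Y$ depends only on $\ell$, on $\kappa_0,\kappa_1$, and on the curvature bounds of $S$ — hence on nothing tied to $(x_0,V)$ — and this constant is the desired $\globalcurv_0>0$.

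\textbf{Main obstacle.} Step 1 is essentially bookkeeping with the contact form. The hard part will be Step 2: identifying the correct ``grazing'' boundary data for the Riccati/Jacobi comparison at the tangency point — where the equidistant front degenerates — and showing that a fixed length of backward free flight converts it into a uniformly strictly convex front. Strict convexity of $\partial K$ is what supplies the positivity, while compactness of $\partial K$ together with the curvature bounds on $S$ are what make $\globalcurv_0$ independent of $x_0$ and $V$.
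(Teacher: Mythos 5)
You should note at the outset that this paper does not prove the proposition at all — it is quoted from \cite{GNS2023Preprint1} — so there is no in-paper proof to compare with; your sketch has to stand on its own, and it has a genuine gap at exactly the point you flag as the hard one. Step 1 is fine: the contact-form computation does produce a hypersurface $Y$ orthogonal to the grazing congruence, diffeomorphic to $U$, whose forward normal rays meet $U$ tangentially. The problem is Step 2, and it is a sign problem, not a bookkeeping one. With $Y$ placed \emph{before} the tangency and $n$ pointing toward it, the principal curvature of $Y$ in the grazing direction is forced to be negative, roughly $-1/\ell$ with respect to $n$ in this paper's convention: the variation of your congruence obtained by sliding the foot point $x$ along the integral curve of $V_x$ gives a Jacobi field whose component orthogonal to $\dot\gamma_x$ equals $X-\langle X,V_x\rangle V_x$ at the tangency, hence vanishes exactly there (and its derivative is nonzero there by strict convexity of $\partial K$), so every ray of the congruence carries a focal point of $Y$ precisely at its tangency point. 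In your own two-dimensional model this is the classical fact that the evolute of the involute is the original curve: the centre of curvature of $Y$ at $y_x$ \emph{is} the tangency point, so relative to the normal pointing at $U$ the curvature is $-1/\ell$, not $+1/\ell$ as you assert. Indeed, under the paper's curvature conditions a front that is strictly convex with respect to the normal along which it propagates has $\vert J^\perp\vert$ increasing along its rays, so it cannot have all of its forward normal rays tangent to $\partial K$; ``transporting the grazing data back the distance $\ell$'' therefore cannot yield a positive definite shape operator for your $(Y,n)$.

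Moreover, for $m\geq 3$ the remaining $m-2$ principal curvatures are not governed by the convexity of $\partial K$ alone: at the tangency the relevant initial data in those directions is essentially $\mathrm{Hess}_{\partial K}f$ (plus mixed second-fundamental-form terms), and the transverse Hessian of an eikonal at a point is free, so ``strict convexity of $\partial K$ is precisely what preserves positivity under the transport'' is an assertion, not an argument — a bad choice of $f$ produces the wrong sign in those directions. A correct proof must (i) settle the orientation: the strictly convex front attached to the grazing family sits on the far side of the tangencies, i.e.\ the rays leave $U$ tangentially and then meet $Y$ orthogonally, so that the focal direction contributes curvature $\approx +1/\ell$ rather than $-1/\ell$; (ii) choose the direction field $V_x$ (equivalently the eikonal data in the transverse directions) so that the front is uniformly diverging already at the grazing moment, and then propagate this for the fixed time $\ell$ by a Riccati estimate as in \Cref{lemma:always_convex}; and (iii) extract $\globalcurv_0$ independent of $(x_0,V)$ by compactness of the unit tangent bundle of $\partial K$ together with the curvature bounds on $S$, with $\ell$ chosen uniformly small so the grazing segments stay in $S_K$. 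Your outline names the right tools (Jacobi/Riccati comparison, compactness), but it skips — and on the decisive grazing direction contradicts — the sign analysis that constitutes the actual content of the proposition.
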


\begin{lemma}[\cite{GNS2023Preprint1}]\label{lemma:minimal_reflection_angle}
		There exist constants $\minref\in\mathbb{Z}^+$ and $\varphi_0\in (0,\frac{\pi}{2})$ such that any geodesic reflecting transversally on $\partial K$ at least $\minref$ times will hit $\partial K$ at an angle of at most $\varphi_0$ to the outward normal at least once.
\end{lemma}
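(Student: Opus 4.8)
\emph{Plan.} I would argue by contradiction through a compactness (limiting) argument: were no such pair $(\minref,\varphi_0)$ to exist, one could extract from a sequence of ever‑more‑glancing trajectories a single geodesic that grazes $\partial K$ tangentially infinitely often and therefore never escapes $S$, contradicting the non‑trapping of geodesics in $S$. (When $d=1$ the statement is trivial, so assume $d\ge 2$.)

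Suppose the conclusion fails. Applying its negation with $\minref=n$ and $\varphi_0=\tfrac{\pi}{2}-\tfrac1n$, for each $n\in\mathbb{Z}^+$ we obtain a billiard trajectory $\beta_n$ with transversal reflection points $q_1^n,q_2^n,\dots$, at least $n$ of them, each meeting $\partial K$ at an angle exceeding $\tfrac{\pi}{2}-\tfrac1n$ to the outward normal. First I would record two uniform facts: (i) consecutive reflection points lie on distinct components of $K$ — a ray reflected off the outside of a strictly convex component moves away from it and, by strict convexity together with the curvature bounds on $S$, cannot return before travelling a definite distance — so the free path between consecutive reflections has length at least some fixed $\rho_0>0$; and (ii) each free segment of $\beta_n$, a geodesic segment of $S$ and therefore minimizing, has length at most $\maxdist$ (the diameter of $S$), so the $k$-th reflection occurs at time at most $k\maxdist$. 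Parametrising $\beta_n$ by arc length from its first reflection and passing to a subsequence, the vectors $\big(q_1^n,\dot\beta_n(0^+)\big)\in\uT{S_K}$ converge to some $\sigma_\infty=(q_\infty,v_\infty)$; since the incidence angles at $q_1^n$ tend to $\tfrac{\pi}{2}$, the limit direction $v_\infty$ is tangent to $\partial K$ at $q_\infty$.

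Let $\gamma_\infty$ be the geodesic of $M$ with $\dot\gamma_\infty(0)=v_\infty$; by strict convexity of $\partial K$ it stays outside $K$ near $t=0$, touching $\partial K$ only at $q_\infty$. Using (i), (ii), and the standard fact that as reflection angles degenerate to $\tfrac\pi2$ the corresponding trajectories converge in $C^0$, on compact time intervals, to the grazing geodesic, I would show that along a further subsequence $\beta_n\to\gamma_\infty$ on compact $t$-intervals, that for each fixed $k$ the reflection time of $q_k^n$ converges to some $t_k^\infty$ with $t_{k+1}^\infty-t_k^\infty\ge\rho_0$, and that $\gamma_\infty(t_k^\infty)\in\partial K$ with $\gamma_\infty$ tangent to $\partial K$ there and meeting the interior of $K$ nowhere in between. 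Because $\beta_n$ has at least $n$ reflections and $n\to\infty$, this produces such a tangency time $t_k^\infty\ge(k-1)\rho_0$ for every $k\ge1$; hence $\gamma_\infty$ is defined and contained in $S_K$ for all $t\ge0$, and in particular never meets $\partial S$. But $S$ is strictly convex with unique minimal geodesics, so every maximal geodesic segment of $S$ has both endpoints on $\partial S$ and length at most $\maxdist$; a geodesic that remains in $S$ for all time cannot exist. This contradiction establishes the lemma, with $\varphi_0$ and $\minref$ determined (non‑explicitly) by the above.

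I expect the main difficulty to be making the limiting step rigorous. The billiard flow is genuinely discontinuous at grazing trajectories, so one must argue with care that the limit of the reflecting $\beta_n$ — whose reflection angles all tend to $\tfrac\pi2$ — is exactly the non‑reflecting geodesic $\gamma_\infty$, rather than a trajectory that reflects or that terminates on $\partial S$ early, and that the limit retains infinitely many (not merely finitely many) grazing points; this is precisely where the uniform free‑path bound $\rho_0$ in (i) and a diagonal argument over both $n$ and $k$ are essential. I would also isolate (i) and (ii) as self‑contained geometric lemmas about strictly convex obstacles under the standing curvature hypotheses on $S$. A more quantitative alternative is possible — bounding the number of consecutive near‑tangent reflections directly via the small turning angle $\pi-2\varphi_0$ at each reflection, the lower bound $\rho_0$ on free paths, and a comparison‑geometry estimate showing such a broken geodesic must leave a set of diameter $\maxdist$ — but the limiting argument above is cleaner to state.
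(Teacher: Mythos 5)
First, a caveat on comparison: this paper does not prove \Cref{lemma:minimal_reflection_angle} at all --- it is imported from \cite{GNS2023Preprint1} --- so your argument can only be judged on its own terms, not against a proof in the text. Your overall strategy (negate, extract a sequence $\beta_n$ of trajectories with at least $n$ reflections all at angles $>\tfrac{\pi}{2}-\tfrac1n$, pass to a limiting grazing geodesic that stays in $\overline{S_K}$ for all time, and contradict the finite horizon of the strictly convex $S$) is a legitimate soft alternative to the quantitative estimate one would expect in the source (essentially your closing remark: small turning per glancing reflection, free paths bounded below, total length bounded above, giving an explicit $\minref\sim\maxdist/\mindist$). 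The diagonal/compactness skeleton is fine, and you correctly identify that the whole argument hinges on a uniform lower bound $\rho_0>0$ for the free path between consecutive glancing reflections: it is what bounds the number of reflections per unit time (so the cumulative turning on $[0,T]$ is $\lesssim (T/\rho_0)\cdot\tfrac2n\to0$ and $\beta_n\to\gamma_\infty$ on compacta) and what guarantees $\beta_n$ survives inside $S_K$ up to any fixed time $T$ once $n$ is large.

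The genuine soft spot is how you obtain $\rho_0$. Your claim (i), that consecutive reflection points lie on distinct components of $K$, is a Euclidean fact (a ray leaving a convex body outward never returns to it) and does not transfer verbatim to the Riemannian setting: with positive ambient curvature a geodesic leaving $\partial K_i$ can in principle be bent back onto the same component, and you cannot invoke conditions (\ref{condition:neg_curve})/(\ref{condition:pos_curve}) here without circularity, since those conditions are formulated in terms of the constants $\minref,\varphi_0$ that this lemma is supposed to produce. What you actually need (and what your parenthetical justification really asserts) is weaker and is true: a uniform lower bound on the return distance to $K_i$, obtainable from a Hessian comparison for the distance function to a strictly convex obstacle --- using $\mincurv>0$ against $\maxsec$, the distance to $K_i$ is convex along the geodesic while it stays within some definite $d_1>0$ of $K_i$, so a return costs length at least $2d_1$, and one takes $\rho_0=\min\{\mindist,2d_1\}$. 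This lemma must be stated and proved, not asserted; as written, step (i) is both stronger than needed and unsupported. Separately, your claim (ii) that free segments are minimizing and hence of length at most $\maxdist$ does not follow from the standing hypothesis (uniqueness of \emph{minimal} geodesics does not make every geodesic segment in $S$ minimal); what you may use is the finite horizon, which the paper takes as part of the setup and which is also exactly what your final contradiction requires, so the endgame is fine once that is invoked rather than re-derived via minimality.
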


Denote the maximal sectional curvature of $S$ by $\maxsec$, i.e. $sec_M(X,Y)\leq\maxsec$ for all $X,Y\in\uT{S}$. Let $\mincurv>0$ be a lower bound for the principal curvatures of $\partial K$ and $\partial L$.
We ask that $S$ satisfies one of the following conditions where $\minref$, $\varphi_0$ and $\globalcurv_0$ are defined as in \Cref{proposition:convex_front_from_obstacle} and \Cref{lemma:minimal_reflection_angle} above:
\begin{enumerate}[(A)]
	\item $\maxsec \leq 0$ or,\label{condition:neg_curve}
	\item $\maxsec > 0,$ while $\maxdist\minref\sqrt{\maxsec}<\frac{\pi}{2}$ and $\tan(\maxsec\maxdist\minref)\sqrt{\maxsec}<\globalcurv,$\label{condition:pos_curve}\vspace{0.2cm}\\
	where $\globalcurv = \min\{2\mincurv\cos\varphi_0,\globalcurv_0\}$.
\end{enumerate}
We can now state the main result.

\begin{restatable}{theorem}{mainthm}\label{thm:dim3}
	Suppose that $K$ and $L$ have the same set of travelling times. If $m\geq 3$ and conditions (\ref{condition:neg_curve}) or (\ref{condition:pos_curve}) are satisfied then $K = L$.
\end{restatable}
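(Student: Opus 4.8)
The plan is to reduce the full rigidity statement to the already-established result of \cite{GNS2023Preprint1} by showing that $\sT_K=\sT_L$ forces $K$ and $L$ to be equivalent up to tangency; once that is done, \Cref{proposition:convex_front_from_obstacle} together with the hypothesis (\ref{condition:neg_curve}) or (\ref{condition:pos_curve}) applies verbatim and yields $K=L$. So the core of the argument is: \emph{if $m\geq 3$ and $\sT_K=\sT_L$, then for every inward vector $\sigma\in\TS$ the trajectories $\gamma_K^+(\sigma)$ and $\gamma_L^+(\sigma)$ agree up to the first point of tangency with $\partial K$ (resp. $\partial L$).} The strategy for this is to exploit that in dimension $\geq 3$ the travelling time data, viewed as a function on $\TS$, is generically smooth, and that the non-smoothness locus / the way the first reflection point depends on $\sigma$ encodes $\partial K$ near the "visible" points. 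This is exactly the place where the arguments of \cite{MR3359579} in $\reals^m$ ($m>2$) are used, and the bulk of the work is transplanting those arguments to the Riemannian setting using the curvature-controlled comparison estimates available under conditions (\ref{condition:neg_curve})--(\ref{condition:pos_curve}).

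In more detail, I would proceed as follows. First, fix a point $x_0$ on $\partial K$ that is reachable by a free (non-reflected) geodesic from $\partial S$ and approached tangentially; such points form an open dense subset of $\partial K$ by a standard transversality/convexity argument, and by \Cref{proposition:convex_front_from_obstacle} the associated convex fronts have principal curvatures bounded below by $\globalcurv_0$, which under condition (\ref{condition:pos_curve}) guarantees the fronts stay convex (no focal points) all the way back to $\partial S$ within distance $\maxdist$. Second, I would show that the set of $\sigma\in\TS$ whose forward billiard trajectory for $K$ meets $\partial K$ tangentially (or not at all) is detectable from $\sT_K$ alone: along a short transversal family of initial directions, the travelling time is smooth precisely on the complement of the "grazing set," and the one-sided derivatives of $t^*$ as a function of $\sigma$ at a grazing direction recover the first contact point and the tangent plane $T_{x_0}\partial K$ — this is the Riemannian analogue of the Euclidean computation in \cite{MR3359579}, and one must replace straight-line reflection geometry with Jacobi-field estimates and the second variation formula, all of whose error terms are controlled by $\maxsec$, $\mindist$, and $\maxdist$. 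Third, with $T_{x_0}\partial K$ recovered for a dense set of $x_0$, and using strict convexity (so $\partial K$ is locally the graph of a strictly convex function over its tangent plane), I would reconstruct $\partial K$ near $x_0$ from the family of its tangent planes — equivalently, $K$ is the intersection of the geodesic half-spaces on the inner side of these tangent hyperplanes, an argument that genuinely needs $m\geq 3$ because in the plane the "tangent line from a boundary point" data is too thin. Applying the identical reconstruction to $L$ from $\sT_L=\sT_K$ gives that $\partial K$ and $\partial L$ have the same tangent hyperplanes at corresponding visible points, hence agree near all such points; together with the fact that the visible boundary points are dense and determine each component, this yields equivalence up to tangency.

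I expect the main obstacle to be the second step: making rigorous that the first-reflection point and its tangent plane are recoverable from $\sT_K$ in the Riemannian setting. In $\reals^m$ one uses explicit reflection formulas and the elementary geometry of lines and spheres; here one must instead argue via the regularity of the billiard flow $\genflow^K$ near grazing trajectories, control the dependence of the exit point $\gamma_K^+(\sigma)(t^*)$ and of $t^*$ on $\sigma$ through Jacobi fields along geodesics in $S_K$, and rule out that the geometry of $S$ (focal points, conjugate points) destroys the monotonicity/convexity used in the Euclidean proof — this is precisely what conditions (\ref{condition:neg_curve}) and (\ref{condition:pos_curve}) are designed to prevent, and verifying that they suffice is the technical heart of the paper. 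A secondary difficulty is handling trapped directions: one must check that $\Trap{\partial S}^{(K)}$ and $\Trap{\partial S}^{(L)}$ do not interfere with the density of visible/grazing data, which follows from \Cref{lemma:minimal_reflection_angle} and the resulting uniform control on trajectories that reflect many times, ensuring the trapped set is "small" (nowhere dense, measure zero) in $\TS$. Once these points are secured, the passage back to $K=L$ is a direct invocation of \cite{GNS2023Preprint1} via \Cref{proposition:convex_front_from_obstacle}.
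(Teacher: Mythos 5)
Your overall plan --- show that $\sT_K=\sT_L$ forces $K$ and $L$ to be equivalent up to tangency and then invoke the uniqueness theorem of \cite{GNS2023Preprint1} --- is not the route the paper takes, and as outlined it has a gap that is essentially the whole difficulty of the problem. Your reconstruction of $\partial K$ rests on the claim that the points of $\partial K$ reachable tangentially by a \emph{free} (non-reflected) geodesic from $\partial S$ form an open dense subset of $\partial K$. Without a non-eclipse/general-position hypothesis --- which is precisely the hypothesis this paper is removing --- that claim is false: components of $K$ can shadow one another, and entire regions of $\partial K$ are reachable only by trajectories with many reflections, or only by trapped trajectories carrying no travelling-time data at all. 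Recovering tangency data at such shadowed points is the hard part, and your second and third steps (one-sided derivatives of $t^*$ at grazing directions, then realizing $K$ as an intersection of ``geodesic half-spaces'' bounded by tangent hyperplanes) only address directly visible points; the half-space intersection moreover has no intrinsic meaning in a general Riemannian manifold, since totally geodesic hypersurfaces with prescribed tangent planes need not exist.

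You have also misidentified where $m\geq 3$ enters. It is not that tangent-line data is ``too thin'' in the plane; it is that the trapping set $\Trap{\partial S}^{(K)}$ can disconnect $\TS$ when $m=2$ (e.g.\ two symmetric disks), which blocks the argument the paper actually runs: a continuation argument along paths in $\TS$. The paper defines \emph{irregular} points (points of $\partial K$ near which $\partial K$ and $\partial L$ fail to coincide), shows via \Cref{lemma:N-non_tangent,lemma:N-tangent_regular,lemma:N-tangent_irregular} and the induction of \Cref{prop:zn} that an $N$-admissible path can always be modified so that its terminal ray carries at most one irregular point --- at which stage equality of the travelling times forces the remaining reflection data to agree for $K$ and $L$ --- and then uses the path-connectivity of $\TTTS$ for $m\geq 3$ (\Cref{prop:ttts-path-connected}, proved by a topological-dimension argument) to propagate regularity from a single free ray to every point of $\partial K$, giving $K\subseteq L$ (\Cref{thm:one-sided}) and $K=L$ by symmetry. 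This propagation scheme is also what the cited arguments of \cite{MR3359579} actually do in $\reals^m$; they are not a tangent-plane reconstruction from first-contact data, so the Riemannian transplantation you describe is not the missing technical heart --- the missing piece is the treatment of shadowed and trapped portions of the boundary.
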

\noindent
Note that for the case where $m=2$ it is currently not known whether the same holds, even when $M = \reals^2$. \Cref{thm:one-sided} gives a set of sufficient conditions for the result to hold in the two dimensional case, however it is not known whether all obstacles $K$ and $L$ satisfy them. Currently only those pairs of obstacles which satisfy the equivalent up to tangency condition are known to be uniquely determined by their sets of travelling times. The obstruction to the current approach is the trapping set $\Trap{\partial S}$, which may disconnect $\TS$. In fact it is known to do so even in the simple case of two symmetric disks in the plane.

\section{Preliminary Results}
For a hypersurface $\Sigma$ in $S$, given a unit normal vector field $N^*$ we define the shape operator $s_\Sigma$ of $\Sigma$ as $s_\Sigma Y = \nabla_Y N^*$ where $Y$ is any vector tangent to $\Sigma$. We call the eigenvalues of $s_\Sigma$ the principal curvatures of $\Sigma$ with respect to $N^*$.
We say that a hypersurface $\Sigma$ in $S$ is strictly convex if all principal curvatures of $\Sigma$ have the same sign. Let $N_\Sigma(x)$ be the unit normal of $\Sigma$ at $x$ such that the principal curvatures of $\Sigma$ with respect to $N_\Sigma$ are positive. We adopt the convention of calling $N_\Sigma(x)$ the \emph{inward} unit normal to $\Sigma$ at $x$. Similarly we will call $-N_\Sigma(x)$ the \emph{outward} unit normal. Unless otherwise specified, we will always take the principal curvatures of a strictly convex hypersurface in $S$ with respect to $N_\Sigma$.
 
\begin{proposition}[\cite{GNS2023Preprint1}]\label{lemma:always_convex}
	For some $\frontmincurv>0$, the following holds. Suppose that $\Sigma$ is a strictly convex hypersurface in $S$, with principal curvatures bounded below by $\globalcurv>0$. Let 
	\[
		\Sigma_t = \{\pr_1\circ\flow_t(x,N_\Sigma(x)):x\in \Sigma\}.
	\]
	Then the principal curvatures of $\Sigma_t$ are bounded below by $\frontmincurv$, provided\footnote{Note that $\Sigma_t$ may cease to be an embedded submanifold if both $\Sigma$ and $D$ are sufficiently large. One can avoid this issue when working locally by shrinking $\Sigma$ initially.} that $t < \maxdist\minref$.
	
\end{proposition}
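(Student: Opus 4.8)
The plan is to recognise $\{\Sigma_t\}$ as the family of equidistant hypersurfaces of $\Sigma$ under the normal geodesic flow, to track the evolution of their shape operators by the matrix Riccati equation, and then to compare against a scalar Riccati equation built from the curvature bound $\maxsec$. Concretely, I would fix $x\in\Sigma$, let $\gamma$ be the unit-speed geodesic with $\gamma(0)=x$ and $\dot\gamma(0)=N_\Sigma(x)$ (so $\gamma(t)=\pr_1\circ\flow_t(x,N_\Sigma(x))$), and let $U(t)$ be the shape operator of $\Sigma_t$ at $\gamma(t)$ taken with respect to the transported normal $\dot\gamma(t)$, so that $U(0)=s_\Sigma$. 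Using the Jacobi fields of the variation through normal geodesics (equivalently, differentiating the second fundamental form of $\Sigma_t$ along $\gamma$) gives
\[
  \cdiff{t}U + U^2 + \mathcal R(t) = 0, \qquad U(0)=s_\Sigma ,
\]
where $\mathcal R(t)Y = R(Y,\dot\gamma(t))\dot\gamma(t)$ is the symmetric curvature operator on $\dot\gamma(t)^{\perp}$, whose eigenvalues are the sectional curvatures $sec_M(Y,\dot\gamma(t))$ with $Y\perp\dot\gamma(t)$; hence $\mathcal R(t)\le\maxsec\,\mathrm{Id}$ as quadratic forms.

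Next I would invoke the standard Riccati comparison. Let $u$ solve the scalar equation $u'+u^2+\maxsec=0$ with $u(0)=\globalcurv$. Since $s_\Sigma\ge\globalcurv\,\mathrm{Id}$ and $\mathcal R(t)\le\maxsec\,\mathrm{Id}$, the comparison principle for symmetric operator solutions of the Riccati equation yields $U(t)\ge u(t)\,\mathrm{Id}$ on the whole interval of existence of $u$; because $u$ is associated with the pointwise-larger curvature, $U$ remains defined (and positive, once we have bounded $u$ below) there, so $\Sigma$ has no focal point along $\gamma$, the map $x\mapsto\gamma_x(t)$ stays an immersion, and $\Sigma_t$ is genuinely a hypersurface. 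Global injectivity of $\Sigma_t$ is the only thing this does not give, and (as in the footnote) I would sidestep it by establishing the curvature bound on a small piece of $\Sigma$ and then using its uniformity.

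It then remains to bound $u$ from below on $[0,\maxdist\minref)$. Under condition (\ref{condition:neg_curve}) one has $u'=-u^2-\maxsec\ge -u^2$, hence $u(t)\ge\globalcurv/(1+\globalcurv t)\ge\globalcurv/(1+\globalcurv\maxdist\minref)$ for all $t<\maxdist\minref$. Under condition (\ref{condition:pos_curve}), writing $\maxsec=b^2$ one gets $u(t)=b\cot\!\bigl(bt+\mathrm{arccot}(\globalcurv/b)\bigr)$, which is positive and decreasing for as long as $bt+\mathrm{arccot}(\globalcurv/b)<\frac{\pi}{2}$; the two inequalities in (\ref{condition:pos_curve}) are exactly what is needed to force $b\maxdist\minref+\mathrm{arccot}(\globalcurv/b)<\frac{\pi}{2}$, so $u$ stays positive throughout. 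In either case put $\frontmincurv=\inf_{0\le t<\maxdist\minref}u(t)>0$; this depends only on $\maxsec$, $\globalcurv$, $\maxdist$ and $\minref$, and $U(t)\ge\frontmincurv\,\mathrm{Id}$ for $t<\maxdist\minref$, i.e. the principal curvatures of $\Sigma_t$ are bounded below by $\frontmincurv$, as claimed.

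I expect the only genuinely delicate step to be the operator-valued Riccati comparison: keeping the sign conventions consistent between the shape operator, the Jacobi equation and the curvature operator, and invoking the comparison theorem in a form that also transfers existence, so that ``$u$ defined and positive'' automatically upgrades to ``$U$ defined and $\ge u\,\mathrm{Id}$'' without a separate focal-point argument. Everything else — solving the two scalar ODEs and checking that conditions (\ref{condition:neg_curve})/(\ref{condition:pos_curve}) are precisely the thresholds keeping the comparison solution positive up to time $\maxdist\minref$ — is routine once that is in hand.
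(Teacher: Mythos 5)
The paper does not prove this proposition here; it is imported verbatim from \cite{GNS2023Preprint1}, where the argument is exactly the one you give: propagate the shape operator along the normal geodesics via the matrix Riccati equation $U'+U^2+\mathcal R=0$, compare with the scalar solution of $u'+u^2+\maxsec=0$, $u(0)=\globalcurv$, and check that conditions (\ref{condition:neg_curve}) and (\ref{condition:pos_curve}) keep $u$ positive up to time $\maxdist\minref$, taking $\frontmincurv=\inf u$. Your sign conventions, the direction of the Eschenburg--Heintze comparison (smaller curvature operator and larger initial data give the larger solution, with existence transferring because $U$ can only blow down in forward time), and the two scalar computations are all correct; the only discrepancy is that condition (\ref{condition:pos_curve}) as printed reads $\tan(\maxsec\maxdist\minref)\sqrt{\maxsec}<\globalcurv$, whereas your derivation (and the first inequality $\maxdist\minref\sqrt{\maxsec}<\tfrac{\pi}{2}$, which exists precisely to keep that tangent finite) shows the intended expression is $\tan(\sqrt{\maxsec}\,\maxdist\minref)\sqrt{\maxsec}<\globalcurv$ --- a typo in the statement, not a gap in your proof.
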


\begin{proposition}[\cite{GNS2023Preprint1}]\label{proposition:reflection_sff}
	Suppose that $\Sigma$ is a strictly convex hypersurface in $S$ and $\gamma_{(x,N_\Sigma(x))}$ reflects off an obstacle $\partial K$ at time $t_r$. Let $s^-$ and $s^+$ be the shape operators of $\Sigma$ before and after reflection respectively, at the point of reflection $\gamma_{(x,N_\Sigma(x))}(t_r)\in \partial K$. Let $s_K$ be the shape operator of $\partial K$. Then the shape operators satisfy the following equation,
	\begin{equation}\label{eq:reflection_sff}
		s^+(Y_+) - s^-(Y_-) = -2\langle N_-, N_K\rangle s_K(Y),
	\end{equation}
	for all $Y$ tangent to $K$. Here we denote by $Y_\pm = Y - \langle Y, N_\pm \rangle N_\pm$, where $N_-$ is the normal to $\Sigma$ prior to reflection, $N_+$ is the outward unit normal to $\Sigma$ after reflection and $N_K$ is the normal to $\partial K$.
\end{proposition}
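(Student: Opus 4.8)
The plan is to work entirely at the reflection point $p = \gamma_{(x,N_\Sigma(x))}(t_r)\in\partial K$ and to realise all three shape operators as tangential covariant derivatives of suitable unit normal fields defined along $\partial K$. Concretely, I would first introduce the \emph{incident direction field} $N_-$ and the \emph{reflected direction field} $v$: for $q\in\partial K$ near $p$ let $N_-(q)$ be the direction of the incident ray of the front passing through $q$ (this is the inward unit normal of the pre-reflection front, and equals the velocity of the generating geodesic), and let $v(q)$ be the direction of the corresponding reflected ray (the inward unit normal of the post-reflection front $\Sigma^+$, so that $N_+ = -v$). Transversality of the reflection guarantees these are smooth fields on a neighbourhood of $p$ in $\partial K$. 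Because billiard trajectories are geodesics between reflections, the extensions of $N_-$ and $v$ as ray-velocity fields satisfy $\nabla_{N_-}N_- = 0$ and $\nabla_v v = 0$, while the law of reflection gives the pointwise identity
\[
	v = N_- - 2\langle N_-, N_K\rangle N_K,
\]
which is independent of the orientation chosen for $N_K$.

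The key reduction is the following elementary observation, which I would isolate as a short lemma: for any $Y$ tangent to $\partial K$ one has $\nabla_Y N_- = s^-(Y_-)$ and $\nabla_Y v = s^+(Y_+)$. Indeed, decomposing $Y = Y_- + \langle Y, N_-\rangle N_-$ with $Y_-$ tangent to the incident front and using $\nabla_{N_-}N_- = 0$ gives $\nabla_Y N_- = \nabla_{Y_-}N_- = s^-(Y_-)$, since $s^-$ is by definition the covariant derivative of the inward normal restricted to the front; the identical argument with $v$ and $\nabla_v v = 0$ yields $\nabla_Y v = s^+(Y_+)$, after checking that $Y - \langle Y, v\rangle v = Y - \langle Y, N_+\rangle N_+ = Y_+$. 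This is exactly what lets one differentiate the fronts in directions tangent to $\partial K$ rather than tangent to the fronts themselves, and it is why the projections $Y_\pm$ appear in the statement.

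With these tools in hand I would simply apply $\nabla_Y$ to the reflection identity. Writing $s_K(Y) = \nabla_Y N_K$ and expanding $\nabla_Y\big(\langle N_-, N_K\rangle N_K\big)$ by the Leibniz rule gives
\[
	s^+(Y_+) - s^-(Y_-) = -2\langle N_-, N_K\rangle\, s_K(Y) - 2\big(Y\langle N_-, N_K\rangle\big)N_K .
\]
This is the desired formula together with a single leftover term proportional to $N_K$. The final — and most delicate — step is to dispose of this normal term. It does not vanish as a vector (a flat incident front reflecting off a curved obstacle already produces $Y\langle N_-, N_K\rangle\neq 0$), so the identity of the statement is to be read as an identity of second fundamental forms on $T_p\partial K$: pairing both sides with an arbitrary $Z$ tangent to $\partial K$ annihilates the $N_K$-term because $\langle N_K, Z\rangle = 0$, leaving $\langle s^+(Y_+), Z\rangle - \langle s^-(Y_-), Z\rangle = -2\langle N_-, N_K\rangle\langle s_K(Y), Z\rangle$, equivalently the equation after orthogonal projection onto $T_p\partial K$.

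I expect the sign and orientation bookkeeping to be the main place to be careful. One must fix once and for all that $s^\pm$ are taken with respect to the inward front normals $N_-$ and $v$ (not $N_+$) — taking $s^+$ with respect to the outward normal $N_+=-v$ would turn the left-hand side into a sum with the wrong sign — and one must confirm that the problem's $Y_+$, built from $N_+=-v$, coincides with the projection onto the reflected front. The quadratic appearance of $N_K$ in the reflection law makes each of these steps orientation-independent, which I would verify explicitly. A secondary technical point worth recording is the smoothness and well-definedness of the fields $N_-,v$ near $p$, which rests on the reflection being transversal; restricting to a sufficiently small neighbourhood of $p$, exactly as in \Cref{lemma:always_convex}, suffices.
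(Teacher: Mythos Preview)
The paper does not actually prove this proposition: it is quoted verbatim from \cite{GNS2023Preprint1} and no argument is given here, so there is nothing in the present paper to compare your proposal against.

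On its own merits your argument is the standard one and is correct. Extending the incident and reflected ray-direction fields $N_-$ and $v$ to a neighbourhood via the geodesic flow, using $\nabla_{N_-}N_-=\nabla_v v=0$ to reduce $\nabla_Y N_-$ and $\nabla_Y v$ to the front shape operators evaluated at $Y_\pm$, and then differentiating the reflection law $v = N_- - 2\langle N_-,N_K\rangle N_K$ is exactly how this identity is derived in the billiards literature. Your observation about the residual $-2\big(Y\langle N_-,N_K\rangle\big)N_K$ term is also correct and important: that term does not vanish as a vector in $T_pS$, so \eqref{eq:reflection_sff} should indeed be read either as an identity after orthogonal projection onto $T_p\partial K$, or equivalently as an identity of the associated second fundamental forms $\langle s^\pm(\cdot),\cdot\rangle$ restricted to $T_p\partial K\times T_p\partial K$. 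Your bookkeeping on the normals ($N_+=-v$, $s^\pm$ taken with respect to the inward front normals, and the orientation-independence coming from the quadratic appearance of $N_K$) is consistent with the conventions fixed just before \Cref{lemma:always_convex}.
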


\begin{corollary}[\cite{GNS2023Preprint1}]\label{proposition:theta_convex}
	Suppose that $\Sigma$ is a strictly convex hypersurface in $S$, with principal curvatures bounded below by $\globalcurv>0$. Let $N_\Sigma$ be the outward unit normal field of $X$. Then for any $t>0$, the hypersurface $X_t$ given by
	\[
		\Sigma_t = \{\pr_1\circ\genflow_t(x,N_\Sigma(x)):x\in \Sigma\},
	\]
	is also strictly convex.
\end{corollary}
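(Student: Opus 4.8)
The plan is to write the billiard flow $\genflow_t$ as an alternating sequence of free geodesic arcs, along which $\genflow_t$ agrees with the geodesic flow $\flow$, and reflections off $\partial K$, and to follow the shape operator of the evolving front through both. A single arc is controlled by \Cref{lemma:always_convex} and a single reflection by \Cref{proposition:reflection_sff}; the substance of the argument is to chain these together so that the curvature lower bound never erodes to zero, no matter how many (or how few) reflections occur.

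First I would pin down the effect of one reflection. With $s^-,s^+$ the shape operators of the front just before and just after reflection off $\partial K$ as in \Cref{proposition:reflection_sff}, the strict convexity of $\partial K$ gives $s_K\geq\mincurv\,\mathrm{Id}$, while the scalar $-2\langle N_-,N_K\rangle$ is nonnegative (the ray meets the scatterer from its exterior, which is the dispersing effect of a convex obstacle). Hence \eqref{eq:reflection_sff} adds to the shape operator a positive-semidefinite term, so after a change of basis absorbing the linear maps $Y\mapsto Y_\pm$ one gets $s^+\geq s^-$ as quadratic forms: a reflection never destroys convexity, it only enhances it. Moreover, if the reflection is $\varphi_0$-transversal, meaning the angle to the normal is at most $\varphi_0$ and hence $|\langle N_-,N_K\rangle|\geq\cos\varphi_0$, the added term is $\geq 2\mincurv\cos\varphi_0\geq\globalcurv$ in the quadratic-form order, so the front emerges from such a reflection with all principal curvatures $\geq\globalcurv$ regardless of their size beforehand.

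The induction then runs on the invariant that \emph{immediately after every $\varphi_0$-transversal reflection, and at time $0$, the front has all principal curvatures $\geq\globalcurv$}. By \Cref{lemma:minimal_reflection_angle}, between two consecutive $\varphi_0$-transversal reflections there are at most $\minref$ free arcs, each of length at most $\maxdist$, so the cumulative geodesic-flow time elapsed is at most $\maxdist\minref$, while the non-transversal reflections in between only increase the shape operator. Since the shape operator satisfies a Riccati equation on each arc and jumps upward at each reflection, a comparison with the pure-geodesic solution keeps its eigenvalues above the lower bound supplied by \Cref{lemma:always_convex} for a geodesic arc of length $\maxdist\minref$ under condition (\ref{condition:neg_curve}) or (\ref{condition:pos_curve}); in particular they remain $\geq\frontmincurv>0$. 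At the next $\varphi_0$-transversal reflection they are boosted back up to $\geq\globalcurv$, which closes the induction. The same estimate covers the arc(s) before the first such reflection, the tail after the last one, and --- when the trajectory is trapped --- the entire infinite sequence of arcs, since \Cref{lemma:minimal_reflection_angle} guarantees a $\varphi_0$-transversal reflection within every block of $\minref$ reflections. Hence the front is strictly convex along the whole trajectory; as $x\in\Sigma$ was arbitrary, $\Sigma_t$ is strictly convex for every $t>0$.

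The main obstacle is precisely this chaining: a single free arc can drop the curvature lower bound from $\globalcurv$ down to only $\frontmincurv$, so one cannot iterate \Cref{lemma:always_convex} arc by arc. It is the combinatorial input of \Cref{lemma:minimal_reflection_angle} --- transversal reflections recur frequently enough to re-boost the curvature before \Cref{lemma:always_convex}'s time budget $\maxdist\minref$ is exhausted --- together with the calibration built into conditions (\ref{condition:neg_curve})--(\ref{condition:pos_curve}), namely $\globalcurv\leq 2\mincurv\cos\varphi_0$ and the requirement that the Riccati bound persist for time $\maxdist\minref$, that makes the whole thing fit together. The steps needing genuine care are checking the reflection sign against the chosen orientation of the normals (so that $-2\langle N_-,N_K\rangle\,s_K$ is genuinely a nonnegative contribution rather than a small one of ambiguous sign) and making the Riccati comparison rigorous across the jump discontinuities at the reflection times.
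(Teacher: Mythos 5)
Your argument is correct and is essentially the intended one: the paper states this corollary without proof, citing [GNS23], and the machinery assembled around it --- \Cref{lemma:always_convex} for the free arcs, \Cref{proposition:reflection_sff} with $s_K\ge\mincurv$ for the reflection jumps, \Cref{lemma:minimal_reflection_angle} to guarantee a reflection at angle at most $\varphi_0$ within every block of $\minref$ reflections, and the calibration $\globalcurv\le 2\mincurv\cos\varphi_0$ built into conditions (\ref{condition:neg_curve})--(\ref{condition:pos_curve}) --- is exactly what your chaining uses, including the key observation that one cannot iterate \Cref{lemma:always_convex} arc by arc because it only returns the bound $\frontmincurv$. The one step you flag as delicate, the sign of $-2\langle N_-,N_K\rangle s_K$ relative to the chosen normal orientations, is indeed the only place requiring care, and your resolution is the standard one.
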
 

\begin{proposition}[\cite{GNS2023Preprint1}]\label{proposition:convex_front_collision}
	Suppose $\Sigma$ and $\Pi$ are two strictly convex fronts such that for some $t_0>0$ and $x\in \Sigma$ we have $x_{t_0}:=\gamma_{(x,N_\Sigma(x))}(t_0)\in \Pi$. Moreover, suppose that $N_{t_0}(x) := \dot\gamma_{(x,N_\Sigma(x))}(t_0)$ is an inward normal to $\Pi$, and that for any principal curvatures $k_\Sigma$ and $k_\Pi$ of $\Sigma$ and $\Pi$ respectively with respect to $N_{t_0}(x)$ at $x_{t_0}$, we have $k_\Pi<-k_{\min}<0<k_{\min}<k_\Sigma.$ Let $\mathcal{G}$ be the set of points $x^*\in \Sigma$ such that $x^*_{t(x^*)}\in \Pi$ and $N_{t(x^*)}(x^*)$ is normal to $\Pi$. Then $\mathcal{G}$ is a submanifold of $\Sigma$ of dimension 0. That is, $\mathcal{G}$ is at most countable.
\end{proposition}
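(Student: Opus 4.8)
The plan is to show that every point of $\mathcal{G}$ is isolated; since $\Sigma$ is a second countable manifold this forces $\mathcal{G}$ to be a discrete, hence $0$-dimensional, submanifold of $\Sigma$, and in particular at most countable. So fix $x^*\in\mathcal{G}$, put $\tau_* = t(x^*)$, $q = x^*_{\tau_*}\in\Pi$ and $\nu = N_{\tau_*}(x^*) = \dot\gamma_{(x^*,N_\Sigma(x^*))}(\tau_*)$, which by definition of $\mathcal{G}$ is a unit normal to $\Pi$ at $q$. Because $\nu$ is normal to $\Pi$ the geodesic $\gamma_{(x^*,N_\Sigma(x^*))}$ meets $\Pi$ transversally at $q$, so there are a neighbourhood $U$ of $x^*$ in $\Sigma$, a smooth travel time $\tau\colon U\to\reals^+$ with $\tau(x^*) = \tau_*$, and smooth maps $q(y) = \gamma_{(y,N_\Sigma(y))}(\tau(y))\in\Pi$ and $w(y) = \dot\gamma_{(y,N_\Sigma(y))}(\tau(y))$; shrinking $U$ we may also assume $\Sigma_t$ stays embedded near $q(y)$ for the relevant $t$. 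Let $N_\Pi$ be the continuous unit normal field of $\Pi$ near $q$ with $N_\Pi(q) = \nu$. I then define $\mathbf{G}\colon U\to T\Pi$ by letting $\mathbf{G}(y)$ be the component of $w(y)$ tangent to $\Pi$ at $q(y)$, so that $\mathbf{G}(x^*) = 0$ and $\mathcal{G}\cap U = \mathbf{G}^{-1}(\text{zero section})$ (near $x^*$ the unit vector $w(y)$ is normal to $\Pi$ exactly when its $\Pi$-tangential part vanishes). Since $\dim U = \dim\Pi = m-1$, it suffices to prove that the vertical part of $D\mathbf{G}_{x^*}$ is injective: then $\mathbf{G}$ is transverse to the zero section at $x^*$, $\mathcal{G}\cap U$ is a $0$-dimensional submanifold, and $x^*$ is isolated in $\mathcal{G}$.

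To compute $D\mathbf{G}_{x^*}$ I pick $v\in T_{x^*}\Sigma$ and a variation $y_s$ of $x^*$ with $\tfrac{d}{ds}y_s\big|_0 = v$, and let $J$ be the induced Jacobi field along $\gamma_0 = \gamma_{(x^*,N_\Sigma(x^*))}$, so $J(0) = v$ and $J'(0) = s_\Sigma(v)$. Since $J(0),J'(0)\perp\dot\gamma_0(0)$ one gets $\langle J,\dot\gamma_0\rangle\equiv 0$, hence $J(\tau_*)\in T_q\Pi$; moreover $\tfrac{d}{ds}q(y_s)\big|_0 = J(\tau_*) + \tau'(x^*)(v)\,\nu$ lies in $T_q\Pi$ and is therefore orthogonal to $\nu$, which together with $J(\tau_*)\perp\nu$ gives $\tau'(x^*)(v) = 0$ and $\tfrac{d}{ds}q(y_s)\big|_0 = J(\tau_*)$. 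Differentiating $\mathbf{G}$ and using that the unit-vector pairing $\langle w(y_s),N_\Pi(q(y_s))\rangle$ has an interior maximum at $s = 0$ (so its derivative there vanishes), I obtain $D\mathbf{G}_{x^*}(v) = J'(\tau_*) - \nabla_{J(\tau_*)}N_\Pi$. Now $\nabla_{J(\tau_*)}N_\Pi = s_\Pi(J(\tau_*))$ is the shape operator of $\Pi$ at $q$ applied to $J(\tau_*)$, and $J'(\tau_*) = s_{\Sigma_{\tau_*}}(J(\tau_*))$ by the standard link between front Jacobi fields and the Riccati evolution of the shape operator (cf.\ \Cref{lemma:always_convex}), where $s_{\Sigma_{\tau_*}}$ is the shape operator of the evolved front $\Sigma_{\tau_*}$ at $q$; crucially, both operators are here taken with respect to the \emph{same} normal $\nu = \dot\gamma_0(\tau_*)$, and both act on $T_q\Pi = \nu^{\perp} = T_q\Sigma_{\tau_*}$. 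Thus
\[
	D\mathbf{G}_{x^*}(v) = \big(s_{\Sigma_{\tau_*}} - s_\Pi\big)\big(J(\tau_*)\big).
\]

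The curvature hypothesis $k_\Pi < -\frontmincurv < 0 < \frontmincurv < k_\Sigma$ says exactly that, with respect to $\nu$, every eigenvalue of $s_{\Sigma_{\tau_*}}$ is $\geq\frontmincurv$ while every eigenvalue of $s_\Pi$ is $\leq -\frontmincurv$; combining the global curvature lower bounds on $\Sigma$ and $\Pi$ from \Cref{proposition:convex_front_from_obstacle} with the uniform convexity of the evolved fronts from \Cref{lemma:always_convex}, the same sign pattern persists at \emph{every} point of $\mathcal{G}$, after possibly shrinking $\frontmincurv > 0$. Since shape operators are self-adjoint, it follows that $s_{\Sigma_{\tau_*}} - s_\Pi \succeq 2\frontmincurv\,\mathrm{Id}$ is positive definite, hence invertible. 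Therefore $D\mathbf{G}_{x^*}(v) = 0$ forces $J(\tau_*) = 0$, whence $J'(\tau_*) = s_{\Sigma_{\tau_*}}(0) = 0$, so $J\equiv 0$ and $v = J(0) = 0$. Thus $D\mathbf{G}_{x^*}$ is injective, $x^*$ is isolated in $\mathcal{G}$, and the proof is complete.

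The main obstacle is the variation computation identifying $D\mathbf{G}_{x^*}$ with $(s_{\Sigma_{\tau_*}} - s_\Pi)\circ(v\mapsto J(\tau_*))$, together with the bookkeeping that forces both shape operators to be read off against the common normal $\nu = \dot\gamma_0(\tau_*)$: the hypothesis $k_\Pi < -\frontmincurv < \frontmincurv < k_\Sigma$ is tuned precisely so that the \emph{difference} of the two shape operators (not their sum) appears and inherits the positive lower bound $2\frontmincurv$, and a sign slip here would destroy the positivity. A secondary point, easily overlooked, is justifying that this curvature inequality is available at all of $\mathcal{G}$ and not merely at the base point $x$ of the statement — this is exactly where the global bounds of \Cref{proposition:convex_front_from_obstacle} and \Cref{lemma:always_convex} are needed.
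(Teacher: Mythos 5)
This proposition is quoted from \cite{GNS2023Preprint1} and the present paper gives no proof of it, so there is nothing internal to compare your argument against; judged on its own terms, your proof is sound and is the natural one. The key computation is correct: with $\tau'(x^*)=0$ established from $J(\tau_*)\perp\nu$, the derivative of the arrival direction is $J'(\tau_*)$, the derivative of $N_\Pi\circ q$ is $s_\Pi(J(\tau_*))$, the normal coefficient contributes nothing (your interior-maximum observation for $\langle w,N_\Pi\rangle$), and the Riccati relation $J'(\tau_*)=s_{\Sigma_{\tau_*}}(J(\tau_*))$ turns $D\mathbf{G}_{x^*}$ into $(s_{\Sigma_{\tau_*}}-s_\Pi)\circ(v\mapsto J(\tau_*))$, which is injective under the hypothesis since the difference of the two self-adjoint operators dominates $2\frontmincurv\,\mathrm{Id}$ and $v\mapsto J(\tau_*)$ is injective when $\Sigma_{\tau_*}$ is nonsingular (which the finiteness of its principal curvatures at $x_{t_0}$ guarantees). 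Two small points deserve flagging. First, the statement only hypothesises the curvature separation at the single point $x_{t_0}$, so strictly speaking your transversality argument proves isolation of $x$ in $\mathcal{G}$; extending to every point of $\mathcal{G}$ requires, as you say, that the same separation hold at every normal-incidence point, and your appeal to \Cref{proposition:convex_front_from_obstacle} and \Cref{lemma:always_convex} for this is an appeal to the ambient context rather than to the literal hypotheses --- it would be cleaner to state that the hypothesis is assumed at each point of $\mathcal{G}$ (as the application intends). Second, a fixed $x^*$ may have several incidence times $t(x^*)$; your local analysis isolates $x^*$ within each branch, and one should add a word (local finiteness of the intersection times, or a countable union over branches) to get the global countability conclusion. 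Neither issue affects the substance of the argument.
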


 Given $\sigma\in\uT{S_K}$, let $0=t_0(\sigma)<t_1(\sigma)<t_2(\sigma)<\dots$ be the reflection times of $\gamma^+_K(\sigma)$. That is, $\gamma^+_K(\sigma)(t_i(\sigma))\in\partial K$ is the i-th reflection of $\gamma^+_K(\sigma)$ on $\partial K$. Note that $t_i(\sigma)$ does not necessarily exist for an arbitrary $\sigma\in\uT{S_K}$ and arbitrary integer $i>0$. Given an arbitrary integer $N>0$, fix some $0<\minextra\ll\mindist$, and define $t^N(\sigma) = t_N(\sigma) + \minextra$ if $t_N(\sigma)$ exists. If $t_N(\sigma)$ does not exist let $i^*\geq 0$ be the largest integer such that $t_{i^*}(\sigma)$ exists, and define $t^N(\sigma) = t_{i^*}(\sigma)+\minextra$. Let $\gamma_K^N(\sigma) = \{\pr_1\circ\genflow_t(\sigma):0\leq t \leq t^N(\sigma)\}$.
 We define the following submanifold of $TS$:
\[
		T_{\partial K} S_K = \{(x,\omega)\in\uT{S_K}:x\in\partial K\}.
\]
\begin{proposition}\label{proposition:p_i_general}
	There exists a locally finite, countable family $\{ P_i^{(K)}\}$ of codimension $1$ submanifolds of $\TS\setminus\Trap{\partial S}^{(K)}$ such that $\gamma_K^+(\sigma)$ is not tangent to $\partial K$ for any $\sigma \in \TS\setminus (\cup_i P_i^{(K)}\cup\Trap{\partial S}^{(K)})$.
\end{proposition}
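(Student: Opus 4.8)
The plan is to exhibit the set of ``bad'' directions --- those $\sigma\in\TS\setminus\Trap{\partial S}^{(K)}$ for which $\gamma_K^+(\sigma)$ grazes some component of $\partial K$ --- as a countable, locally finite union of codimension-$1$ submanifolds, and take these to be the $P_i^{(K)}$; the stated conclusion is then immediate. The first step is a stratification. Writing $\partial K=\bigcup_j\partial K_j$, for each $j$ and each integer $n\geq 0$ let $Z_{n,j}$ be the set of $\sigma$ for which the \emph{first} tangential contact of $\gamma_K^+(\sigma)$ with $\partial K$ is a contact with $\partial K_j$ occurring immediately after exactly $n$ transversal reflections. Every bad $\sigma$ lies in exactly one $Z_{n,j}$, so it suffices to cover each $Z_{n,j}$ by countably many codimension-$1$ submanifolds and then verify that the resulting family is locally finite.

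To parametrise $Z_{n,j}$, I would run trajectories backwards from the grazing configuration. Let $\mathcal{W}_j\subseteq\uT{S}$ be the unit tangent bundle of $\partial K_j$ --- the unit vectors based at points of $\partial K_j$ and tangent to $\partial K_j$ --- which is a smoothly embedded submanifold of $\uT{S}$ of dimension $2m-3$. Using the convention that the billiard flow passes through a grazing contact without reflecting, the trajectory through $(y,w)\in\mathcal{W}_j$ traced in negative time leaves $\partial K_j$ immediately by strict convexity, and on the open subset $\Omega_{n,j}\subseteq\mathcal{W}_j$ where this backward trajectory reaches $\partial S$ after exactly $n$ transversal reflections and no grazings in between, it defines a smooth map $R_n\colon\Omega_{n,j}\to\TS$ sending $(y,w)$ to the inward unit vector at the exit point. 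By construction $Z_{n,j}\subseteq R_n(\Omega_{n,j})$, and $R_n$ is injective since a point of $Z_{n,j}$ determines its first grazing configuration.

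The crux is that $R_n$ is an \emph{immersion}, and this is exactly where strict convexity of the obstacles is used. Writing $R_n$ (up to an orientation identification) as $\genflow^K$ evaluated at a configuration-dependent negative time applied to the inclusion $\mathcal{W}_j\hookrightarrow\uT{S_K}$, the standard computation for such reparametrised flow maps shows that $dR_n$ fails to be injective at $(y,w)$ only if the billiard vector field there --- which at a grazing state is the geodesic vector field --- is tangent to $\mathcal{W}_j=\uT{\partial K_j}$; equivalently, only if the geodesic of $S$ through $(y,w)$ is tangent to $\partial K_j$ to second order at $y$, i.e.\ $\langle s_{K_j}(w),w\rangle=0$. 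Strict convexity of $\partial K_j$ forbids this, since $\langle s_{K_j}(w),w\rangle\geq\mincurv>0$. Hence $R_n$ is an injective immersion of the $(2m-3)$-manifold $\Omega_{n,j}$ into $\TS$; covering $\Omega_{n,j}$ by countably many charts on which $R_n$ is an embedding expresses $R_n(\Omega_{n,j})$, and therefore $Z_{n,j}$, as contained in a countable union of embedded submanifolds of $\TS$ of dimension $2m-3$, i.e.\ of codimension $1$. Ranging over the finitely many $K_j$ and countably many $n$ yields the family $\{P_i^{(K)}\}$.

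What remains is local finiteness on $\TS\setminus\Trap{\partial S}^{(K)}$, and I expect this to be the main obstacle. Fix $\sigma_0$ there. If $\gamma_K^+(\sigma_0)$ has no grazing then it lies in the region where $\genflow^K$ is continuous and the reflection count is locally constant, so a neighbourhood of $\sigma_0$ meets none of the $Z_{n,j}$. If it does graze, then since $\sigma_0$ is non-trapped, $\gamma_K^+(\sigma_0)$ --- with its grazings passed through --- reaches $\partial S$ after at most $t^*/\mindist$ obstacle contacts, where $t^*$ is the exit time, using that consecutive contacts are separated by distance at least $\mindist$ under (\ref{condition:neg_curve})/(\ref{condition:pos_curve}); one then argues inductively that a nearby $\sigma$ either grazes close to one of these finitely many grazing points, whence $\sigma\in Z_{n',j'}$ for $n'$ in a bounded range, or reflects transversally there at a shallow angle and thereafter shadows the continuation of $\gamma_K^+(\sigma_0)$, so that again its first grazing is preceded by boundedly many transversal reflections. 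Were local finiteness to fail we could extract $\sigma_k\to\sigma_0$ whose trajectories accumulate arbitrarily many reflections, forcing $\sigma_0\in\Trap{\partial S}^{(K)}$, a contradiction. Making this shadowing argument precise --- controlling reflection counts near grazing trajectories and ruling out accumulation of the family at non-trapped points --- is where the curvature hypotheses and \Cref{lemma:minimal_reflection_angle} are needed.
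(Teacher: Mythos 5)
Your proposal follows essentially the same route as the paper: parametrise the first grazing configurations by the $(2m-3)$-dimensional unit tangent bundles $\uT{\partial K_j}$, stratify by the number of transversal reflections separating the grazing point from $\partial S$, push forward to $\TS$ along the (reversed) billiard flow to obtain codimension-$1$ submanifolds, and get local finiteness from the fact that the reflection count is locally bounded off the trapping set. Your explicit verification that the flow-to-boundary map is an immersion (via $\langle s_{K_j}(w),w\rangle\geq\mincurv>0$) is a detail the paper leaves implicit, while your local-finiteness discussion is more elaborate than needed --- the paper simply takes a neighbourhood of a non-trapped $\sigma_0$ on which the number of reflections is bounded by some finite $r'$, which immediately excludes all but finitely many strata.
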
 

\begin{proof}
	Consider any component $K_i$ of $K$ and the unit tangent bundle of its boundary $\uT{\partial K_i}$.
	Denote by $\Gamma_i$ the set of vectors $\sigma\in\uT{\partial K_i}\setminus\Trap{S_K}$ such that $\gamma_K^+(\sigma)(t)$ only has transversal reflections with $\partial K$ for all $t>0$. Consider the subset $\Gamma_i^j\subset\Gamma_i$ of vectors $\sigma\in\Gamma_i$ such that $\gamma_K^+(\sigma)$ has exactly $j$ transversal reflections prior to intersecting $\partial S$. Given $\sigma\in\Gamma_i^j$ there is an open neighbourhood $U(\sigma)\subset\Gamma_i^j$ of $\sigma$ and a smooth function $\tau_{\sigma}:U(\sigma)\to\reals$ such that $\gamma_K^+(u)(\tau_{\sigma}(u))\in\partial S$ for all $u\in U(\sigma)$. Let $U(\sigma)$ be the maximal subset for which $\tau_\sigma$ is well-defined and smooth. Note that $\Gamma_i^j$ is disconnected and, in fact, each $U(\sigma)$ will form a component of $\Gamma_i^j$. Since there are finitely many obstacles, and each $\sigma'\in\Gamma_i^j$ generates a trajectory with exactly $j$ reflections, there are finitely many components $U(\sigma_1),\dots,U(\sigma_{n(i,j)})$ in $\Gamma_i^j$. Now for each $1\leq l\leq n(i,j)$ let $\Phi_l:U(\sigma_l)\to\uT{S}$ be the smooth map given by $u\mapsto\genflow_{\tau_{\sigma_l}(u)}(u)$. Note that $\pr_1\circ\Phi_l(u)\in\partial S$ for all $u\in U(\sigma_l)$ by construction. Let $\Psi(x,\omega) = (x,-\omega)$ for all $(x,\omega)\in\uT S$. Then $\Psi\circ\Phi_l:U(\sigma_l)\to\TS$ is a diffeomorphism onto its image. For each $\Gamma_i^j$ denote its components by $\Lambda^{j,l}_i$. As we have shown, $\Phi\circ\Psi_l(\Lambda^{j,l}_i)$ is a codimension 1 submanifold of $\TS\setminus\Trap{\partial S}$. Let \[\{ P_i^{(K)}\} = \bigcup_{i=1}^d\bigcup_{j=0}^\infty\bigcup_{l=1}^{n(i,j)}\{\Phi\circ\Psi_l(\Lambda^{j,l}_i)\}.\]
	
	Suppose that $\xi_0\in\TS\setminus\Trap{\partial S}$ generates a trajectory which is tangent to $\partial K$. Let $j$-th reflection be the first tangent point of $\gamma_K^+(\xi_0)$ for some $j\geq 1$. Let $\partial K_i$ be the component of $\partial K$ such that $\dot\gamma_K^+(\xi_0)(t_j(\xi_0))\in\uT{\partial K_i}$. It follows by our argument above that $\xi_0\in\Psi\circ\Phi_l(\Lambda^{j,l}_i)$ for some $1\leq l\leq n(i,j)$. Therefore every $\sigma \in \TS\setminus (\cup_i P_i^{(K)}\cup\Trap{\partial S})$ generates a ray which is not tangent to $\partial K$. Finally, given $\xi_0\in\TS\setminus\Trap{\partial S}$, the ray $\gamma_K^+(\xi_0)$ has finitely many reflections, denote this number by $r$. Let $V(\xi_0)\subset \TS\setminus\Trap{\partial S}$ be any open neighbourhood of $\xi_0$ such that $\gamma_K^+(\xi_0')$ has at most $r'$ reflections for all $\xi_0'\in V(\xi_0)$ and some $r<r'<\infty$. It follows that $V(\xi_0)$ may intersect at most finitely many submanifolds in $\{ P_i^{(K)}\}$, since 
	\[V(\xi_0)\bigcap\left(\bigcup_{i=1}^d\bigcup_{j=r'+1}^\infty\bigcup_{l=1}^{n(i,j)}\{\Phi\circ\Psi_l(\Lambda^{j,l}_i)\}\right)=\emptyset.\]
	Therefore $\{ P_i^{(K)}\}$ is indeed locally finite, and countable as desired.
\end{proof}

When restricting the billiard rays to at most finitely many reflections $N$, the proof of \Cref{proposition:p_i_general} has the following direct consequence.

\begin{corollary}\label{proposition:p_i}
	Given any integer $N > 0$, there exists a finite family $\{ P_i^{(K,N)}\}$ of codimension $1$ submanifolds of $\TS$ such that $\gamma_K^N(\sigma)$ is not tangent to $\partial K$ for any $\sigma \in \TS\setminus \cup_i P_i^{(K,N)}$.
\end{corollary}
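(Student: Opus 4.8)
The plan is to reuse, essentially verbatim, the machinery built in the proof of \Cref{proposition:p_i_general}, but to truncate every union at the $N$-th reflection, which turns the locally finite, countable family $\{P_i^{(K)}\}$ into an honestly finite one.

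First I would recall the three ingredients produced there: for each obstacle component $K_i$ and each integer $j\geq 0$, the set $\Gamma_i^j\subseteq\uT{\partial K_i}$ of unit vectors tangent to $\partial K_i$ whose billiard trajectory makes exactly $j$ transversal reflections, and no tangency, before reaching $\partial S$; the finitely many connected components $\Lambda_i^{j,l}$ ($1\leq l\leq n(i,j)$) of $\Gamma_i^j$; and the diffeomorphisms $\Psi\circ\Phi_l$, which carry each $\Lambda_i^{j,l}$ onto a codimension-$1$ submanifold $\Phi\circ\Psi_l(\Lambda_i^{j,l})$ of $\TS$ (flow the tangency vector forward to $\partial S$, then reverse it). The decisive fact established there is: if a trajectory issued from some $\xi_0\in\TS$ has its \emph{first} tangency with $\partial K$ at its $j$-th reflection, occurring on $K_i$, then $\xi_0$ lies on one of the submanifolds attached to the index pair $(i,j-1)$.

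Next I would observe that, by the definition of $t^N(\sigma)$, the trajectory $\gamma_K^N(\sigma)$ has at most $N$ reflections on $\partial K$. Hence any tangency of $\gamma_K^N(\sigma)$ with $\partial K$ has a first occurrence at the $j$-th reflection with $1\leq j\leq N$, and the arc of the trajectory preceding that tangency is a finite, transversally reflecting arc that runs back to $\pr_1(\sigma)\in\partial S$; so the reversed tangency vector belongs to some $\Gamma_i^{j-1}$ with $j-1\leq N-1$. In particular the trapped set plays no role here: the arc in question is always finite and returns to the outer boundary, so the construction is valid over all of $\TS$ and there is no need to excise $\Trap{\partial S}^{(K)}$. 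Thus it suffices to set
\[
\{P_i^{(K,N)}\}\;=\;\bigcup_{i=1}^{d}\ \bigcup_{j=0}^{N}\ \bigcup_{l=1}^{n(i,j)}\ \bigl\{\Phi\circ\Psi_l(\Lambda_i^{j,l})\bigr\},
\]
a finite family of codimension-$1$ submanifolds of $\TS$ (since $d$, $N$ and each $n(i,j)$ are finite), and the tangency fact recalled above shows that $\gamma_K^N(\sigma)$ is not tangent to $\partial K$ for every $\sigma\in\TS\setminus\bigcup_i P_i^{(K,N)}$. The final paragraph of the proof of \Cref{proposition:p_i_general}, which secured local finiteness by bounding the reflection count on small neighbourhoods, is no longer needed.

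I expect essentially no genuine obstacle here — the only care required is bookkeeping: keeping straight the off-by-one between ``first tangency at the $j$-th reflection'' and the index $j-1$ of the corresponding submanifold, and noting explicitly that trapped directions may be retained because truncated trajectories are finite. Both points follow immediately from the structure already in place in \Cref{proposition:p_i_general}.
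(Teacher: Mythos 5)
Your proposal is correct and is exactly the truncation argument the paper has in mind: the paper gives no separate proof, stating only that the corollary is a ``direct consequence'' of the proof of \Cref{proposition:p_i_general}, and your finite union over $j\leq N$ together with the observation that a tangency of $\gamma_K^N(\sigma)$ within the first $N$ reflections reverses to a vector in some $\Gamma_i^{j-1}$ (so that trapped directions need not be excised, the backward arc being finite) is precisely the intended fleshing-out. The off-by-one bookkeeping you flag is handled correctly.
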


%

\begin{proposition}[\cite{GNS2023Preprint1}]\label{proposition:tangent_twice}
	There exists a countable family $\{\Xi_i\}$ of codimension 2 smooth submanifolds of $\TS$ such that for any $\sigma\in\TS\backslash (\cup_i\Xi_i)$ the billiard ray generated by $\sigma$ is tangent to $\partial K$ at most once.
\end{proposition}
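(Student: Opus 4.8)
The plan is to mimic the structure of \Cref{proposition:p_i_general}, but now tracking pairs of reflections rather than single tangent reflections. The key observation is that a ray $\gamma_K^+(\sigma)$ being tangent to $\partial K$ at two distinct reflections imposes \emph{two} independent codimension-$1$ conditions, which should generically cut out a codimension-$2$ set. So first I would, for each ordered pair of indices $(j_1,j_2)$ with $1\le j_1<j_2$, and each pair of components $K_{i_1},K_{i_2}$ of $K$, consider the set of $\sigma\in\TS\setminus\Trap{\partial S}^{(K)}$ whose trajectory reflects transversally except possibly at the $j_1$-th and $j_2$-th reflections, with those reflections lying on $\partial K_{i_1}$ and $\partial K_{i_2}$ respectively. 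As in \Cref{proposition:p_i_general}, on each component $U(\sigma)$ of the relevant stratum $\Gamma_i^j$-type set the exit map to $\partial S$ is a diffeomorphism, so it suffices to work in the source, i.e.\ on an open piece of $\uT{\partial K_{i_1}}$ (or directly on $\TS$ after conjugating by $\Psi$).

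Next I would set up the two tangency conditions as the vanishing of two smooth functions. Fix a chart where $\sigma$ ranges over a $(2m-3)$-dimensional open set. Let $f_1(\sigma) = \langle \dot\gamma_K^+(\sigma)(t_{j_1}(\sigma)), N_K\rangle$ measure the cosine of the incidence angle at the $j_1$-th reflection relative to the normal of $\partial K_{i_1}$, defined and smooth on the open stratum where the trajectory is transversal up to and including the $(j_1-1)$-th reflection; tangency at the $j_1$-th reflection is exactly $f_1=0$. Having passed through a tangency the trajectory continues (it grazes and moves on), so we may continue to the $j_2$-th reflection and define $f_2(\sigma)=\langle\dot\gamma_K^+(\sigma)(t_{j_2}(\sigma)),N_K\rangle$ on $\partial K_{i_2}$ analogously. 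The claim is then that $0$ is a regular value of $(f_1,f_2)$, so that $\{f_1=f_2=0\}$ is a codimension-$2$ submanifold; taking the (countable) union over all $(j_1,j_2,i_1,i_2)$ and all the finitely-many components per stratum, and then pushing forward by the diffeomorphisms $\Psi\circ\Phi_l$ as in \Cref{proposition:p_i_general}, produces the family $\{\Xi_i\}$. Local finiteness (hence the stated countability with locally finitely many members meeting any neighbourhood) follows exactly as before: any $\xi_0\notin\Trap{\partial S}^{(K)}$ has finitely many reflections, so a small neighbourhood meets only the strata with $j_2$ below some bound.

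The main obstacle is establishing that $0$ is a regular value of $(f_1,f_2)$ — equivalently, that $df_1$ and $df_2$ are linearly independent along $\{f_1=f_2=0\}$. That $df_1\ne0$ on $\{f_1=0\}$ is the transversality already used in \Cref{proposition:p_i_general}: varying $\sigma$ in a direction that rotates the incoming ray tilts it off tangency at the $j_1$-th reflection, and $\partial K_{i_1}$ is strictly convex so grazing rays are non-degenerate (this is where the curvature bound and \Cref{proposition:convex_front_from_obstacle}/\Cref{proposition:reflection_sff} enter, controlling how the convex front carrying $\gamma_K^+(\sigma)$ meets $\partial K_{i_1}$). The real work is to produce, at any $\sigma$ with $f_1=f_2=0$, a variation lying in $\ker df_1$ that moves $f_2$. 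For this I would use that $m\ge3$: the fibre $\{f_1=0\}$ has dimension $2m-4\ge2$, and among the directions preserving tangency at the first grazing there is still enough freedom to perturb the trajectory between the $j_1$-th and $j_2$-th reflections. Concretely, I would propagate a Jacobi-field / convex-front computation from the first tangency point forward: a variation tangent to $\partial K_{i_1}$ at the grazing point and tangent to the grazing direction keeps $f_1=0$ to first order, yet changes the point and direction at which the trajectory arrives at $\partial K_{i_2}$, and by strict convexity of $\partial K_{i_2}$ (again via \Cref{proposition:reflection_sff} and \Cref{lemma:always_convex} to guarantee the front stays convex with curvature bounded below, using that the total time is $<\maxdist\minref$ under condition (\ref{condition:neg_curve}) or (\ref{condition:pos_curve})) this generically changes the incidence angle, i.e.\ $df_2$ does not annihilate it. Handling the degenerate loci where this fails — e.g.\ symmetric configurations forcing $f_2$ to be constant on a whole component of $\{f_1=0\}$ — is the delicate point; there one argues that such a component is itself contained in a lower-dimensional set, or absorbs it into the family $\{\Xi_i\}$ directly, so the conclusion "tangent at most once off $\cup_i\Xi_i$" still holds.
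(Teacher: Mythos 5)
First, a point of comparison: the paper does not actually prove this proposition --- it is imported verbatim from \cite{GNS2023Preprint1} --- so there is no in-paper proof to measure your argument against. The closest in-paper material is the codimension-one analogue \Cref{proposition:p_i_general}, whose stratification by reflection index and component you correctly reuse, and the convex-front toolkit (\Cref{proposition:convex_front_from_obstacle}, which realises tangencies to $\partial K$ as \emph{normal} incidences on auxiliary strictly convex fronts, and \Cref{proposition:convex_front_collision}, which counts normal incidences between two convex fronts). Those two results are plainly the intended engine for the codimension-two count, and your sketch instead attempts a direct regular-value argument for the pair $(f_1,f_2)$.

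The gap is at the one step you yourself flag as delicate, and your proposed escape from it is circular. Showing $df_1\neq 0$ is indeed just the codimension-one case again; the substance of the proposition is producing, at \emph{every} doubly tangent $\sigma$, a variation in $\ker df_1$ on which $df_2\neq 0$, and you give no mechanism for this beyond ``generically this holds.'' Your fallback for the degenerate loci --- ``such a component is itself contained in a lower-dimensional set, or absorbs it into the family $\{\Xi_i\}$ directly'' --- does not work: a component of $\{f_1=0\}$ along which $f_2$ vanishes identically is a codimension-\emph{one} subset of $\TS$, and it cannot be added to $\{\Xi_i\}$, since the $\Xi_i$ are required to have codimension two. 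That requirement is not cosmetic; the only use made of the proposition is that $\TS\setminus(\cup_i\Xi_i)$ remains path-connected (in \Cref{proposition:N_admissible}) and that $N$-admissible rays carry at most one tangency, both of which fail if a codimension-one piece is smuggled in. So the proof stands or falls on \emph{excluding} such degenerate components, which is exactly where the strict convexity of $\partial K$ and the curvature hypotheses must enter quantitatively --- e.g.\ via the sign-separated curvature comparison in \Cref{proposition:convex_front_collision} applied to the tangency fronts of \Cref{proposition:convex_front_from_obstacle} --- rather than generically. As written, your argument only shows that the doubly tangent set lies in a countable union of codimension-one submanifolds, which is already a consequence of \Cref{proposition:p_i_general}.
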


\section{Modifying $N$-admissible Curves}
The purpose of this section is to address special curves which generate billiard rays that contain problematic points, namely points where the boundaries of $K$ and $L$ do not agree on any neighbourhood of said points. We therefore make the following definitions. A point $x\in\partial K$ will be called \emph{regular} if there exists an open neighbourhood $U\subseteq\partial K$ containing $x$ such that $U\cap \partial L = U$. We will call such a neighbourhood $U$ a \emph{regular neighbourhood}. If no regular neighbourhood of $x$ exists we call $x$ an \emph{irregular} point.

A smooth path $\sigma(s)$, $0\leq s\leq a$ (for some $a > 0$), in $\TS\setminus(\cup_i\Xi_i)$ will be called  $N$-{\it admissible} if it has the following properties:

\begin{enumerate}
\item[(a)] $\gamma_K^+(\sigma(0))=\gamma_L^+(\sigma(0))$ is a non-trapped ray with no irregular points.

\item[(b)]  If  $\sigma(s)\in P$ for some $P\in\{P_i^{(K,N)}\}$ and $s\in (0,a]$, then $\sigma$ is transversal to $P$ at $\sigma(s)$ and $\sigma(s) \notin P'$ for any submanifold $P'\in\{P_i^{(K,N)}\}$ such that $P'\neq P$ .
\end{enumerate}

We shall rely on these $N$-admissible paths throughout, hence it is important to first show their existence. It turns out in fact, that every for every $x\in\partial K$ there is some $N$-admissible path $\sigma:[0,a]\to\TTS$ such that the ray generated by the endpoint $\sigma(a)$ of the path will contain $x$. This will follow by \Cref{lemma:always_reachable} and the following \namecref{proposition:N_admissible}.

\begin{proposition}\label{proposition:N_admissible}
	For every $\sigma^*\in\TS\setminus(\cup_i\Xi_i)$ there exists an $N$-admissible path $\sigma:[0,a]\to\TS\setminus(\cup_i\Xi_i)$ such that $\sigma(a)=\sigma^*$.
\end{proposition}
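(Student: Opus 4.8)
The goal is: given an arbitrary $\sigma^* \in \TS \setminus (\cup_i \Xi_i)$, construct an $N$-admissible path ending at $\sigma^*$. So I need a path $\sigma:[0,a] \to \TS\setminus(\cup_i\Xi_i)$ with $\sigma(a) = \sigma^*$ satisfying (a) the starting point generates a non-trapped ray with no irregular points and agreeing for $K$ and $L$, and (b) the path meets the family $\{P_i^{(K,N)}\}$ one submanifold at a time, transversally.

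The plan is to work backwards from $\sigma^*$. First I would find a "nice" target vector $\sigma_0$ near $\sigma^*$: since $\cup_i \Xi_i$ has codimension 2 and the $P_i^{(K,N)}$ form a finite family of codimension-1 submanifolds (Corollary \ref{proposition:p_i}), a generic small perturbation of $\sigma^*$ lies off all of these, and moreover I can ask that it generates a ray with no irregular points (the set of irregular points, being where $\partial K$ and $\partial L$ disagree, should be "small" — I will need a genericity/Baire-category or dimension-counting statement here; this is the kind of thing the companion lemma \Cref{lemma:always_reachable} presumably supplies or can be invoked alongside) and is non-trapped (the trapped set $\Trap{\partial S}^{(K)}$ is closed with empty interior on the relevant stratum, so avoidable). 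Call this good vector $\sigma_0$ and note $\gamma^+_K(\sigma_0) = \gamma^+_L(\sigma_0)$ because of absence of irregular points together with $\sT_K=\sT_L$.

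Next I would connect $\sigma_0$ to $\sigma^*$ by any smooth path inside $\TS\setminus(\cup_i\Xi_i)$ — possible since removing the countable codimension-2 family $\cup_i\Xi_i$ leaves a connected (indeed, after the perturbation, path-connected within a small ball, then extended) set, and in any case one first moves within a small coordinate ball from $\sigma^*$ to $\sigma_0$. Then I would apply a transversality/general-position argument (Thom transversality, or just the parametric transversality theorem) to perturb this path, rel endpoints, so that it becomes transversal to every $P \in \{P_i^{(K,N)}\}$; since the family is finite, a single generic perturbation works, and transversality of a curve to a codimension-1 submanifold forces the intersections to be isolated points, at which — after a further generic perturbation — the curve meets only one $P$ at a time. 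Reparametrising so the domain is $[0,a]$ and orienting the path so $\sigma_0 = \sigma(0)$, $\sigma^* = \sigma(a)$ gives property (b); property (a) holds by the choice of $\sigma_0$. One must also double-check that the perturbed path stays off $\cup_i\Xi_i$, which again is automatic generically since that family has codimension $2 \geq$ (dimension of the path $+1$) is false in general — rather, a generic curve in a manifold misses a fixed codimension-2 submanifold, so this is fine.

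The main obstacle I expect is controlling the irregular points along the \emph{initial} vector $\sigma_0$: I must guarantee that some vector arbitrarily close to $\sigma^*$ generates a billiard ray (with at most $N$ reflections, or its full forward ray) that entirely avoids irregular points, and this requires knowing the set of irregular points is genuinely negligible — e.g. nowhere dense in $\partial K$, or contained in a countable union of positive-codimension sets — so that the set of $\TS$-vectors whose rays hit it is itself small (a countable union of positive-codimension sets, hence avoidable by Baire category in the complement of $\cup_i \Xi_i \cup \Trap{\partial S}^{(K)}$). Establishing or citing that structural fact about irregular points, and then tracking it through finitely many reflections using the smooth dependence of $\genflow^K$ on initial conditions, is the delicate part; the transversality bookkeeping in step two is routine by comparison.
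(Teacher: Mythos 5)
There is a genuine gap at the very first step of your plan: you propose to obtain the starting vector $\sigma_0$ of the path by a small perturbation of $\sigma^*$, chosen so that its ray has no irregular points, and you justify this by saying the set of irregular points ``should be small'' and that some genericity or dimension-counting statement will make the set of rays hitting it avoidable. No such statement is available, and none can be expected at this stage: an irregular point is by definition a point of $\partial K$ with no neighbourhood contained in $\partial L$, and a priori (before the main theorem is proved) \emph{every} point of $\partial K$ could be irregular. The set of irregular points is merely closed in $\partial K$; it is not contained in any countable union of positive-codimension submanifolds, and showing it is empty is precisely the content of the whole paper. So the Baire-category argument you lean on for condition (a) has nothing to stand on, and the step ``find $\sigma_0$ near $\sigma^*$ whose ray avoids all irregular points'' can fail outright.

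The paper avoids this entirely by \emph{not} taking $\sigma_0$ near $\sigma^*$: it picks $\sigma_0$ so that $\gamma_K^+(\sigma_0)$ is a \emph{free} ray, i.e.\ a geodesic that never meets $\partial K$ (such vectors exist since the obstacles lie in the interior of $S$). A free ray is trivially non-trapped and vacuously contains no irregular points, so condition (a) holds with no structural input about where $\partial K$ and $\partial L$ disagree. One then connects $\sigma_0$ to $\sigma^*$ by an arbitrary smooth path in $\TS\setminus(\cup_i\Xi_i)$ --- path-connected because the $\Xi_i$ have codimension $2$ --- and applies the transversality theorem to the finite family $\{P_i^{(K,N)}\}$ exactly as in your second step, which is correct and matches the paper. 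To repair your proposal, replace the ``perturb $\sigma^*$'' step by the choice of a free-ray starting vector and keep the rest.
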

\begin{proof}
	By \Cref{proposition:tangent_twice} every $\sigma\in\TS\setminus(\cup_i\Xi_i)$ will generate a billiard ray that is tangent to $\partial K$ at most once. Since the submanifolds $\Xi_i$ have codimension 2 in $\TS$ it follows that $\TS\setminus(\cup_i\Xi_i)$ is path-connected. Pick $\sigma_0\in\TS\setminus(\cup_i\Xi_i)$ such that $\gamma_K^+(\sigma_0)$ is a free ray in $S_K$, i.e. a smooth geodesic that does not intersect $\partial K$. Now given $\sigma^*\in\TS\setminus(\cup_i\Xi_i)$ let $\widetilde\sigma:[0,a]\to\TS\setminus(\cup_i\Xi_i)$ be a smooth path such that $\widetilde\sigma(0) = \sigma_0$ and $\widetilde\sigma(a)=\sigma^*$. Denote the set of all such $\widetilde\sigma$ by $P(\sigma_0,\sigma^*)$ which we consider as a subspace of $C^\infty([0,a],\TS\setminus(\cup_i\Xi_i))$ with the Whitney Topology.
	By \Cref{proposition:p_i} there are finitely many submanifolds $P_i$ to which our desired curve $\sigma$ must be transversal. Thus by the transversality theorem 
	(\cite{MR1336822}) the subset $P_R(\sigma_0,\sigma^*)$ of paths in $P(\sigma_0,\sigma^*)$ which are transversal to $P_i$ for all $i$ are residual in $P(\sigma_0,\sigma^*)$. Hence $P_R(\sigma_0,\sigma^*)$ is dense in $P(\sigma_0,\sigma^*)$ by the Baire category theorem. We may now pick any $\sigma\in P_R(\sigma_0,\sigma^*)$ such that $\sigma$ intersects at most one submanifold $P_i$ for each $s\in[0,a]$. Thus we have an $N$-admissible path $\sigma:[0,a]\to\TS\setminus(\cup_i\Xi_i)$ such that $\sigma(a)=\sigma^*$.
\end{proof} 

Eliminating irregular points on a single billiard ray $\gamma_K^+(\sigma_*)$ (for some $\sigma_*\in\TS$) can be prohibitively difficult. Particularly if $\sigma_*\in\Trap{\partial S}^{(K)}$, i.e. $\gamma_K^+(\sigma_*)$ is trapped, where there can be infinitely many such points along the ray. However, when there is an $N$-admissible curve $\sigma$ passing through $\sigma_0$ the difficulties are addressed on two fronts. Firstly, in this section we need only consider the first $N$ reflections of $\gamma_K^+(\sigma_*)$, thus not requiring us to worry about infinitely many irregular points along one billiard ray. Secondly requiring that $\gamma_K^+(\sigma(0))$ has no irregular points will ensure that for $s$ sufficiently close to $0$, the rays $\gamma_K^N(\sigma(s))$ will have no irregular points either. Therefore it will be sufficient to consider $N$-admissible curves $\sigma:[0,a]\to\TTS$ which generate rays that have no irregular points, except for $\sigma(a)$ which will generate a ray $\gamma_K^N(\sigma(a))$ that contains some irregular points. Our aim is to, in effect,  reduce the number of irregular points on this final ray down to one, at which point we may use the travelling time functions of $K$ and $L$ to compare the rays $\gamma_K^+(\sigma(a))$ and $\gamma_L^+(\sigma(a))$. To do so we have to consider a few cases, each of which will be covered by one of \Cref{lemma:N-non_tangent,lemma:N-tangent_irregular,lemma:N-tangent_regular}.

The first case, covered by \Cref{lemma:N-non_tangent}, assumes that the troublesome ray $\gamma^N_K(\sigma(a))$ is not tangent to $\partial K$. Since $\gamma^N_K(\sigma(a))$ only has transversal reflections, it follows that the same holds for $\gamma^N_K(\sigma(s))$ with $s<a$ sufficiently close to $a$. Note that by assumption these rays, $\gamma^N_K(\sigma(s))$, will have no irregular points. Hence we shall modify $\sigma$ near $a$ by perturbing $\gamma^N_K(\sigma(a))$ from the first irregular point, such that the resulting ray has a regular point in place of the second irregular point. This is only possible owing to the rays $\gamma^N_K(\sigma(s))$ having to irregular points for all $s<a$.

\begin{lemma}\label{lemma:N-non_tangent}
	Let $\sigma:[0,a] \to \TTS$ be an $N$-admissible path such that:
	\begin{enumerate}[(i)]
		\item $\gamma_K^N(\sigma(a))$ is not tangent to $\partial K$.
		\item $\gamma_K^N(\sigma(s))$ contains no irregular points for all $s\in [0,a)$.
		\item $\gamma_K^N(\sigma(a))$ contains exactly $n$ irregular points (for some integer $1<n\leq N$).
	\end{enumerate}
	Then for some sufficiently small $\delta > 0$ there exists an $N$-admissible path $\sigma^*:[0,a]\to\TTS$ such that:
	\begin{enumerate}[(i)]
		\item $\gamma_K^N(\sigma^*(a))$ is not tangent to $\partial K$.
		\item $\gamma_K^N(\sigma^*(s))$ contains no irregular points for all $s\in [0,a-3\delta)$.
		\item $\gamma_K^N(\sigma^*(s))$ contains at most $n-1$ irregular points and has the same number of reflections as $\gamma_K^N(\sigma(a))$ for all $s\in [a-3\delta,a]$.
		\item $\gamma_K^N(\sigma^*(a))$ contains the first irregular point of $\gamma_K^N(\sigma(a))$.
	\end{enumerate}
\end{lemma}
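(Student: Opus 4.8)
The plan is to work in a small neighbourhood of $s=a$ and to perturb the path $\sigma$ starting from the parameter value where its ray first touches an irregular point, in such a way that the perturbed ray avoids the second irregular point. Fix the first irregular point $p_1$ of $\gamma_K^N(\sigma(a))$, say it occurs at the $j_1$-th reflection, and the second irregular point $p_2$, at the $j_2$-th reflection with $j_1 < j_2 \le N$. Since $\gamma_K^N(\sigma(a))$ is not tangent to $\partial K$, condition (i), all reflections up to and including the $j_2$-th are transversal; by continuity of the billiard flow there is $\delta>0$ and an open set of vectors near $\sigma(a)$ whose rays also reflect transversally the same number of times, with reflection points depending smoothly on the vector. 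First I would restrict attention to $\sigma|_{[a-3\delta,a]}$ and reparametrise so that the relevant reflection-point maps are all smooth there.

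The key step is the perturbation. Consider the smooth map $\Phi$ sending a vector near $\sigma(a)$ to the position of the $j_2$-th reflection point on $\partial K$; this has full rank in the directions transverse to the ray (one can see this from the fact that the reflected-flow is a local diffeomorphism and the exponential/front spreading is nondegenerate, exactly the mechanism behind Propositions \ref{lemma:always_convex} and \ref{proposition:reflection_sff}). Because $\partial L$ near $p_2$ is a proper relatively closed subset of $\partial K$ of measure zero in $\partial K$ wherever it fails to coincide (the irregular locus through $p_2$ has empty interior in $\partial K$ — this is where irregularity of $p_2$, as opposed to $p_1$, is used), the preimage under $\Phi$ of the ``still-irregular'' set is a positive-codimension subset near $\sigma(a)$, so a generic small displacement of the vector at parameter values just below $a$ moves the $j_2$-th reflection point to a point of $\partial K$ lying in a regular neighbourhood. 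Concretely, I would modify $\sigma$ on $(a-3\delta, a]$, keeping $\sigma$ fixed on $[0,a-3\delta]$ and interpolating smoothly, and push the new ray's $j_2$-th reflection point into the regular set; all earlier reflection points $1,\dots,j_1-1$ stay (by continuity) in the regular neighbourhoods they were already in, so no new irregular points are created, while $p_1$ itself is allowed to persist — hence the count drops from $n$ to at most $n-1$. Finally I would re-invoke \Cref{proposition:N_admissible}'s transversality argument (transversality theorem plus Baire) to arrange that the modified path is again $N$-admissible, i.e. transversal to the finitely many $P_i^{(K,N)}$ and meeting at most one at a time, and that it still avoids $\cup_i\Xi_i$ (codimension $2$, hence avoidable); the non-tangency conclusion (i) for $\sigma^*(a)$ follows because transversality of all $\le N$ reflections is an open condition preserved under a sufficiently small perturbation, and (iv) holds by construction since we never moved the $j_1$-th reflection point away from $p_1$.

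The main obstacle is making precise and quantitative the claim that the $j_2$-th reflection point can be steered into a regular neighbourhood by an arbitrarily small perturbation while \emph{simultaneously} (a) not disturbing the earlier reflections enough to create new irregular points, (b) keeping the number of reflections exactly equal to that of $\gamma_K^N(\sigma(a))$ on the whole interval $[a-3\delta,a]$ (which forces control near grazing/near-tangent configurations — handled by choosing $\delta$ small using non-tangency), and (c) retaining $N$-admissibility. The cleanest way I see to organise (a)–(c) is to first shrink $\delta$ so that on $[a-3\delta,a]$ the combinatorial type of the ray (number and indices of reflections, which component $K_i$ each lands on) is constant and all reflection-point maps are smooth submersions in the transverse directions, then do the perturbation as a compactly supported bump in the transverse-to-$\sigma$ directions whose size is controlled by the distance from the $\Phi$-image to the irregular locus, and only afterwards apply the genericity (transversality) cleanup, which — being a residual-set argument — costs an arbitrarily small further perturbation and hence does not undo the regularity gains.
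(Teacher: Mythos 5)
Your overall strategy --- freeze the path on $[0,a-3\delta]$, then perturb near $s=a$ so that the reflection at the second irregular point is displaced into a regular neighbourhood while the first irregular point is retained --- is the same as the paper's. But the step on which everything hinges is justified by a claim that is false in general: you assert that the irregular locus near $p_2$ has empty interior (indeed measure zero) in $\partial K$, so that a generic small displacement of the $j_2$-th reflection point lands in the regular set. Irregularity of $p_2$ means only that $p_2$ itself has no regular neighbourhood; it in no way prevents an entire open neighbourhood of $p_2$ in $\partial K$ from consisting of irregular points (for instance if $\partial K$ and $\partial L$ simply differ on an open set near $p_2$ --- ruling this out is essentially what the whole paper is trying to prove, so it cannot be assumed here). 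If the irregular set had nonempty interior around $p_2$, your generic perturbation would accomplish nothing. The correct source of a regular target is hypothesis (ii): for $s<a$ the rays $\gamma_K^N(\sigma(s))$ have no irregular points, so their reflection points $x_2(s)$ near $p_2$ are all regular and accumulate at $p_2$. The paper assembles their regular neighbourhoods into an open set $U_2\subseteq\partial K$ with $U_2\cap\partial L=U_2$ and $p_2\in\partial U_2$, and then steers the perturbed ray to hit the \emph{specific} point $x_2(a-\delta)\in U_2$, rather than a ``generic'' nearby point of $\partial K$.

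A second, lesser gap concerns conclusion (iv): the new terminal ray must pass exactly through $p_1$, but a ``compactly supported bump in the transverse-to-$\sigma$ directions'' applied to the initial vectors $\sigma(s)\in\TTS$ will generically move the $j_1$-th reflection point off $p_1$ at the same time as it moves the $j_2$-th. One must perturb only the outgoing direction at $p_1$, i.e.\ move within the fibre of $T_{\partial K}S_K$ over $x_1=p_1$; this is exactly what the paper's construction of the convex front $\Sigma=\{\exp_{x_1}(\varepsilon\omega)\}$, its propagated front $\Pi$, and the choice $v(a)=N_\Pi(p_1^*)$ (which forces $\pr_1\circ\Phi^{-1}(p^*(a),v(a))=x_1$) are designed to achieve, together with the reparametrisation $s^*(s)$ that keeps the target $x_2(s^*(s))$ inside $U_2$ on all of $[a-2\delta,a]$. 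Your appeal to full rank of the reflection map does not by itself reconcile moving the $j_2$-th reflection with fixing the $j_1$-th, so as written (iv) is not established.
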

\begin{proof}
	Let $x_1,\dots,x_n$ (in order of appearance) be the irregular points of $\gamma = \gamma_K^N(\sigma(a))$. Note that for each $x_i$ there is an open subset $U_i\subseteq\partial K$ such that $x_i\in\partial U_i$ and $\partial K \cap U_i = \partial L\cap U_i$. Indeed, suppose $x_i$ is the $j$-th reflection point of $\gamma$. Then for $s<a$ sufficiently close, since $x_i$ is a transversal point of reflection, the rays $\gamma_K^N(\sigma(s))$ will all have their $j$-th reflection $x_i(s)$ within a small neighbourhood $V_i\subset\partial K$ of $x_i$. Now since each $x_i(s)$ is a regular point, there is some $V_i(s)\subseteq V_i$ such that $V_i(s)\cap\partial L = V_i(s)$. Let
	\[
		U_i = \bigcup_{s\in [0,a):x_i(s)\in V_i}{V_i(s)}.
	\]
	Clearly $U_i$ is open, $U_i\cap\partial L = U_i$ and $x_i\in\partial U_i$ as claimed.

	\begin{figure}
		\center
		\begin{tikzpicture}
			\draw (-6,0) arc (90:-45:2);
			\coordinate (k1) at ($(-6,-2)+(-45:2)+(-0.5,0.5)$);
			\node at (k1) {$\partial K_{j_1}$};
			\coordinate (x1) at ($(15:2) + (-6,-2)$);
			\draw[blue,thick, -{Parenthesis}] (x1) arc (15:-40:2) node[midway,left] {$U_1$};
			\node[left] at (x1) {$x_1$};
			\fill (x1) circle(0.05cm);
			\draw[dashed] (x1) -- +(25:1) coordinate (X1);
			\draw ($(x1)+(30:1)$) -- +(25:-0.1) -- ($(x1)+(25:0.9)$);
			\draw[dashed] (x1) -- ($(x1)+(-2:1)$) coordinate (mx1);
			\draw ($(x1)+(3:1)$) -- +(-2:-0.1) -- ($(x1)+(-2:0.9)$);
			\draw[dashed] (x1) -- ($(x1)+(-30:1)$);
			\draw ($(x1)+(-25:1)$) -- +(-30:-0.1) -- ($(x1)+(-30:0.9)$);
			\draw[thick,red, <->] (x1)+(40:1) arc (40:-45:1);
			\node[red, right, above] at ($(X1) + (-0.1,0.3)$) {$\Sigma$};
			
			\draw (6,0) arc (90:245:2);
			\coordinate (k2) at ($(6,-2)+(245:2)+(0.5,0.5)$);
			\node at (k2) {$\partial K_{j_2}$};
			\coordinate (x2) at ($(165:2) + (6,-2)$);
			\draw[blue,thick, -{Parenthesis}] (x2) arc (165:230:2) node[midway,right] {$U_2$};
			\node[right] at (x2) {$x_2$};
			\fill (x2) circle(0.05cm);
			
			\coordinate (Y1) at ($(x2)+(-30:-2)$);
			\coordinate (y1) at ($(Y1) + (20:-1) + (-12:1)$);
			\draw[thick,red, <->] (Y1) arc (20:-65:1) node[red, right, below] {$\Pi$};
			\fill (y1) circle(0.05cm);
			\node at ($(y1)+(-0.2,0.2)$) {$p_1$};
			\draw[dashed] (x2) -- (y1);
			\draw ($(Y1) + (20:-1) + (-7:1)$) -- + (-12:0.1) -- ($(Y1) + (20:-1) + (-12:1.1)$);
			\coordinate (y2) at ($(Y1) + (20:-1) + (-20:1)$);
			\fill (y2) circle(0.05cm);
			\coordinate  (labely2) at ($(y2) + (0.2,-0.7)$);
			\draw[bend right, ->, shorten >= 0.1cm] (labely2) node[below] {$p^*_1$} to (y2);
			\draw[dashed] (y2) -- ($(175:2) + (6,-2)$) coordinate (xs2);
			\draw ($(Y1) + (20:-1) + (-15:1)$) -- + (-20:0.1) -- ($(Y1) + (20:-1) + (-20:1.1)$);
			\fill (xs2) circle(0.05cm) node[right] {$x_2(a-\delta)$};
			
			\draw [->, shorten <= 1cm, shorten >= 1cm, bend left] (mx1) to (y2) node[midway] {$\pr_1\circ\genflow_{\tau}$}; 
		\end{tikzpicture}
		\caption{Construction of $\Sigma$, $\Pi$ and $p^*_1$.}\label{fig1}
	\end{figure}
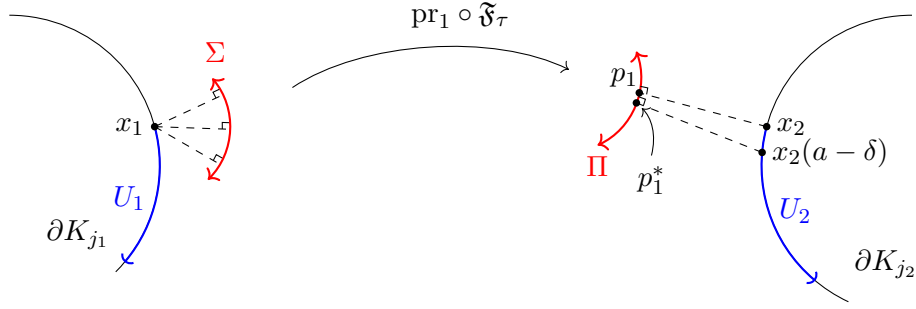

	Let $\omega_i\in\uT{S}$ be the reflected direction of $\gamma$ at $x_i$, for $i=1,\dots,n$. Then there are $\tau_i>0$ such that $\genflow_{\tau_i}^K(x_i,\omega_i) = (x_{i+1},\omega_{i+1})$ for all $i=1,\dots,n-1$. Pick some small $\delta,\varepsilon>0$ and let
	\[
		\Sigma = \{\exp_{x_1}(\varepsilon\omega):\omega\in\uT{S_{x_1}},\norm{\omega-\omega_1}<\delta\}.
	\]
	By \cite{MR618545}, the $m-1$-dimensional submanifold $\Sigma$ will have a minimum principal curvature greater than $\Theta$ provided that $\varepsilon>0$ is sufficiently small. We fix $\varepsilon$, and note that although we may need to shrink $\delta$ later, that will not affect the minimum principal curvature of $\Sigma$. Also set
	\[
		\widetilde\Sigma = \{(x,N_\Sigma(x)):x\in\Sigma\textrm{ and }N_\Sigma(x)\textrm{ is the outward unit normal to } \Sigma\textrm{ at }x\}.
	\]
	Now, similar to $x_i(s)$, let $\omega_i(s)$ be the reflected direction of $\gamma_K^N(\sigma(s))$ at $x_i(s)$ for $i=1,\dots,n$. Fix $\tau < \tau_1$ close to $\tau_1$ such that the hypersurface $\Pi = \pr_1(\widetilde\Pi)$, where $\widetilde\Pi = \genflow_\tau^K(\widetilde\Sigma)$, does not intersect $\partial K$. Note that we may also shrink $\delta$ to ensure $\Pi$ does not intersect $\partial K$ (\Cref{fig1}). For every $p\in\Pi$ let $N_\Pi(p)$ be the outward unit normal to $\Pi$ at $p$. Fix a small $\lambda>0$ and set
	\[
		\widehat\Pi = \{(p,\omega):p\in\Pi,\omega\in\uT{S}_p,\norm{\omega-N_\Pi(p)}<\lambda\}.
	\]
	Recall that we defined
	\[
		T_{\partial K} S_K = \{(x,\omega)\in\uT{S_K}:x\in\partial K\}.
	\]
	Provided $\delta$ is sufficiently small, there is an open neighbourhood $V\subseteq T_{\partial K}S_K$ of $(x_1,\omega_1)$ and there exists a smooth function $t:V\to\reals$ such that ${\genflow_{t(x,\omega)}^K(x,\omega)\in\widehat\Pi}$. Let $\Phi:V\to\widehat\Pi$ be the diffeomorphism defined by $\Phi(x,\omega) = \genflow_{t(x,\omega)}^K(x,\omega)$. We may shrink $\delta$ so that $(x_1(s),\omega_1(s))\in V$ for all $[a-3\delta,a]$. Now since $\Sigma$ is a strictly convex hypersurface with minimum principal curvature greater than $\Theta$, by \Cref{proposition:theta_convex} it follows that $\Pi$ is also strictly convex. Thus for any $s\in[0,a]$ sufficiently close to $a$, there exists a $p\in\Pi$ such that $\exp_p(\widetilde t(p)N_\Pi(p)) = x_2(s)$, for some $\widetilde t(p) > 0$ close to $\tau_1 - \tau$. Possibly shrinking $\delta$ once again, we may assume that $x_2(s)\in U_2$ for all $s\in[a-3\delta,a]$. Let $p_1^*\in\Pi$ be such that $\exp_{p_1^*}(\widetilde t(p_1^*)N_\Pi(p_1^*)) = x_2(a-\delta)$  as in \Cref{fig1}. Then take a smooth curve $p^*:[a-3\delta,a]\to\Pi$ such that $p^*(s) = \pr_1\circ\Phi(x_1(s),\omega_1(s))$ for all $s\in[a-3\delta,a-\delta]$ and $p^*(a)=p^*_1$. Let $\psi:[0,a]\to[0,1]$ be a smooth function such that $\psi(s) = 0$ for all $s\leq a - 3\delta$, and $\psi(s) = 1$ for all $s\geq a-2\delta$ (see e.g. \cite{LEESMOOTH}). Now define,
	\[
		s^*(s) = \psi(s) \left( \frac{a-s}{2} - \delta \right) + s.
	\]
	Then $s^*(s) = s$ for all $s\in [a-3\delta,a-2\delta]$, while $s^*(s)\in(a-2\delta,a-\delta]$ for all $s\in ( a-2\delta,a]$. In particular, note that $s^*(a) = a-\delta$.
	
	Now let $v:[a-3\delta,a]\to\uT{S_K}$ be the smooth function such that, for each $s\in[a-3\delta,a]$, we have $\exp_{p^*(s)}(v(s)\widehat t(s)) = x_2(s^*(s))$ for some $\widehat t(s)$ close to $\tau_1 - \tau$. It follows that $(p^*(s),v(s)) = \Phi(x_1(s),\omega_1(s))$ for all $s\in[a-3\delta,a-2\delta]$ since $p^*(s) = \pr_1\circ\Phi(x_1(s),\omega_1(s))$ and $s^*(s) = s$ whenever $s\in[a-3\delta,a-2\delta]$. Furthermore, when $s = a$ we have $p^*(a) = p_1^*$ and $s^*(a) = a-\delta$ by construction. Thus $\exp_{p^*(a)}(v(a)\widehat t(a)) = x_2(a-\delta)$, that is, $\widehat t(a) = \widetilde t(p^*_1)$ and more importantly $v(a) = N_\Pi(p_1^*)$. Therefore $(p^*(a),v(a))\in\widetilde\Pi$, i.e. \[\pr_1\circ\Phi^{-1}(p^*(a),v(a)) = x_1.\]
	It is important to note here that $\pr_1\circ\Phi^{-1}(\widetilde\Pi) = \{x_1\}$, while $\pr_1\circ\Phi^{-1}(\widehat\Pi) = \pr_1(V) \supset \{x_1\}$. Now set $(x^*_1(s),\omega^*_1(s)) = \Phi^{-1}(p^*(s),v(s))$ for all $s\in[a-3\delta,a]$. Then for $s\in[a-3\delta,a-2\delta]$ we get $(x^*_1(s),\omega^*_1(s)) = (x_1(s),\omega_1(s))$. Moreover, for $s\in [a-2\delta,a]$ we have $\Phi(x^*_1(s),\omega^*_1(s)) = (p^*(s),v(s))$, so that the ray issued from $x^*_1(s)$ in the direction $\omega^*_1(s)$ will hit $\partial K$ at $x_2(s^*(s))\in U_2$ after sufficiently many reflections.
	
		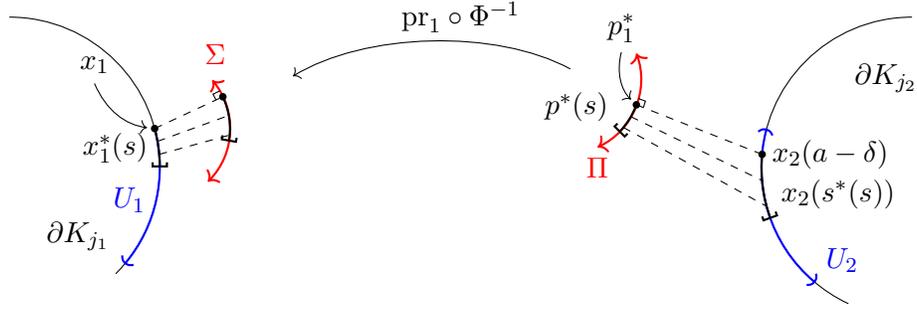
\begin{figure}
		\center
		\begin{tikzpicture}
			\draw (-6,0) arc (90:-45:2);
			\coordinate (k1) at ($(-6,-2)+(-45:2)+(-0.5,0.5)$);
			\node at (k1) {$\partial K_{j_1}$};
			\coordinate (x1) at ($(15:2) + (-6,-2)$);
			\draw[blue,thick, -{Parenthesis}] (x1) arc (15:-40:2) node[midway,left] {$U_1$};
			\draw[thick, -{Bracket}] (x1) arc (15:0:2) node[midway,left] {$x_1^*(s)$};
			\fill (x1) circle(0.05cm);
			\coordinate  (labelx1) at ($(x1) + (-0.8,0.6)$);
			\draw[bend right, ->, shorten >= 0.1cm] (labelx1) node[above] {$x_1$} to (x1);
			
			\draw[thick,red, <->] (x1)+(40:1) arc (40:-45:1);
			\draw[dashed] (x1) -- +(25:1) coordinate (X1);
			\fill (X1) circle(0.05cm);
			\draw[thick, -{Bracket}] (X1) arc (25:-10:1);
			\draw ($(x1)+(30:1)$) -- +(25:-0.1) -- ($(x1)+(25:0.9)$);
			\draw[dashed] ($(10:2) + (-6,-2)$) -- ($(x1)+(10:1)$) coordinate (mx1);
			\draw[dashed] ($(5:2) + (-6,-2)$) -- ($(x1)+(-5:1)$);
			\node[red, right, above] at ($(X1) + (-0.1,0.3)$) {$\Sigma$};
			
			\draw (6,0) arc (90:245:2);
			\coordinate (k2) at ($(6,-2)+(245:2)+(0.5,3)$);
			\node at (k2) {$\partial K_{j_2}$};
			\coordinate (x2) at ($(165:2) + (6,-2)$);
			\draw[blue,thick, {Parenthesis}-{Parenthesis}] (x2) arc (165:230:2) node[anchor = south west] {$U_2$};
			
			\coordinate (Y1) at ($(x2)+(-30:-2)$);
			\coordinate (y1) at ($(Y1) + (20:-1) + (-12:1)$);
			\draw[thick,red, <->] (Y1) arc (20:-65:1) node[red, right, below] {$\Pi$};
			\coordinate (y2) at ($(Y1) + (20:-1) + (-20:1)$);
			\fill (y2) circle(0.05cm);
			\draw[thick, -{Bracket}] (y2) arc (-20:-45:1) node[anchor = south east] {$p^*(s)$};
			\coordinate  (labely2) at ($(y2) + (-0.2,0.7)$);
			\draw[bend right, ->, shorten >= 0.1cm] (labely2) node[above] {$p^*_1$} to (y2);
			\draw[dashed] (y2) -- ($(175:2) + (6,-2)$) coordinate (xs2);
			\draw[dashed] ($(Y1) + (20:-1) + (-30:1)$) -- ($(185:2) + (6,-2)$);
			\draw[dashed] ($(Y1) + (20:-1) + (-40:1)$) -- ($(195:2) + (6,-2)$);
			\draw ($(Y1) + (20:-1) + (-15:1)$) -- + (-20:0.1) -- ($(Y1) + (20:-1) + (-20:1.1)$);
			\fill (xs2) circle(0.05cm) node[right] {$x_2(a-\delta)$};
			
			\coordinate (xa2) at ($(200:2) + (6,-2)$);
			\draw[thick, -{Bracket}] (xs2) arc (175:200:2) node[anchor = south west] {$x_2(s^*(s))$};
			
			\draw [->, shorten <= 1cm, shorten >= 1cm, bend right] (y2) to (mx1) node[midway] {$\pr_1\circ\Phi^{-1}$}; 
		\end{tikzpicture}
		\caption{Construction of $x^*_1(s)$ and $p^*(s)$.}\label{fig2}
	\end{figure}
	
	We will now use our smooth curve $p^*$ to define the smooth path $\sigma^*:[0,a]\to\TTS$. For $s\in[0,a-3\delta]$ let $\sigma^*(s) = \sigma(s)$. For each $s\in[a-3\delta,a]$ let $\sigma^*(s)$ be the unique vector in $\TTS$ such that
	\[
		\genflow_{t^*(s)}(\sigma^*(s)) = (x^*_1(s),\omega^*_1(s)),
	\]
	for some $t^*(s)>0$. Provided $\delta$ is sufficiently small, $t^*(s)$ is smooth, thus $\sigma^*$ is smooth as well. We may also assume that $\gamma_K^N(\sigma^*(s))$ has no tangent points for all $s\in[a-3\delta,a]$ by shrinking $\delta$ if necessary. Furthermore, $\gamma_K^N(\sigma^*(a))$ contains $x_1$, while $x_2$ is not contained in $\gamma_K^N(\sigma^*(s))$ for all $s\in[0,a]$ by construction. Suppose that $z_i$ is a regular point of $\gamma$, then there is a neighbourhood $Z_i\subseteq\partial K$ of $z_i$ such that $Z_i\cap\partial L = Z_i$. By construction, $\gamma_K^N(\sigma^*(s))\cap Z_i\neq\emptyset$, i.e. $\gamma_K^N(\sigma^*(s))$ must have a transversal reflection with $\partial K$ near $z_i$ (within the regular neighbourhood $Z_i$). 
	Hence there can be at most $n$ irregular points on $\gamma_K^N(\sigma^*(s))$ for all $s\in [a-3\delta,a]$, and since $\gamma_K^N(\sigma^*(s))$ passes through $x_2(s^*(s))\in U_2$ there can in fact be at most $n-1$ irregular points. Thus we have constructed the $N$-admissible curve $\sigma^*$ with the desired properties.
\end{proof}

The purpose of the following lemma is to reduce the case where the final ray $\gamma_K^N(\sigma(a))$ has a tangent point of reflection at a regular point to the prior case as in \Cref{lemma:N-non_tangent}. In this case we do not need to eliminate an irregular point, but instead we only have to remove the tangency in the final ray. We do so by carefully perturbing the ray $\gamma_K^N(\sigma(a))$, while ensuring that the resulting ray remains within the regular neighbourhoods that $\gamma_K^N(\sigma(a))$ intersects.

\begin{lemma}\label{lemma:N-tangent_regular}
		Let $\sigma:[0,a] \to \TTS$ be an $N$-admissible path such that:
	\begin{enumerate}[(i)]
		\item $\gamma_K^N(\sigma(a))$ is tangent to $\partial K$ at some regular point $y_0\in\partial K_{i^*}$.
		\item $\gamma_K^N(\sigma(s))$ contains no irregular points for all $s\in [0,a)$.
		\item $\gamma_K^N(\sigma(a))$ contains exactly $n$ irregular points (for some integer $1<n\leq N$).
	\end{enumerate}
	Then for some sufficiently small $\delta^* > 0$ there exists an $N$-admissible path $\sigma^*:[0,a]\to\TTS$ such that:
	\begin{enumerate}[(i)]
		\item $\gamma_K^N(\sigma^*(a))$ is not tangent to $\partial K$.
		\item $\gamma_K^N(\sigma^*(s))$ contains no irregular points for all $s\in [0,a-3\delta^*)$.
		\item $\gamma_K^N(\sigma^*(s))$ contains at most $n$ irregular points and has at most the same number of reflections as $\gamma_K^N(\sigma(a))$ for all $s\in [a-3\delta^*,a]$.
		\item $\gamma_K^N(\sigma^*(a))$ contains the first irregular point of $\gamma_K^N(\sigma(a))$.
	\end{enumerate}
\end{lemma}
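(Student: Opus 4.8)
The strategy is to reduce this ``tangent at a regular point'' case to the purely transversal case handled by \Cref{lemma:N-non_tangent} by performing a small local perturbation of the final ray near the tangent point $y_0$, in such a way that the perturbed ray is everywhere transversal to $\partial K$ while still passing through all the regular neighbourhoods that $\gamma_K^N(\sigma(a))$ visits. Concretely, since $y_0\in\partial K_{i^*}$ is a regular point there is a regular neighbourhood $Y_0\subseteq\partial K$ with $Y_0\cap\partial L = Y_0$. The tangent ray $\gamma_K^N(\sigma(a))$ meets $\partial K_{i^*}$ tangentially at $y_0$ and then continues past it; I would parametrise the family of nearby billiard rays issued from $\sigma(s)$ for $s$ close to $a$ and observe that, because the tangency at $y_0$ is the only tangent point on $\gamma_K^N(\sigma(a))$ (all other reflections transversal, by the $N$-admissibility and \Cref{proposition:tangent_twice} as used earlier), for $s<a$ close to $a$ the ray $\gamma_K^N(\sigma(s))$ either just misses $\partial K_{i^*}$ near $y_0$ or reflects transversally near $y_0$; either way it stays inside $Y_0$ in a neighbourhood of $y_0$ and is otherwise close to $\gamma_K^N(\sigma(a))$.

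\textbf{Key steps.} First I would set up, exactly as in the proof of \Cref{lemma:N-non_tangent}, the open regular neighbourhoods $U_i\subseteq\partial K$ associated to the irregular points $x_1,\dots,x_n$ of $\gamma = \gamma_K^N(\sigma(a))$, together with the regular neighbourhoods $Z_i$ of the transversal regular reflection points $z_i$, and the regular neighbourhood $Y_0$ of the tangent point $y_0$; by construction the first irregular point $x_1$ occurs at some reflection after the tangency or before it, and in either case I want the perturbation to be localised strictly between two consecutive reflections surrounding $y_0$. Second, choose a small strictly convex front $\Sigma$ transverse to $\gamma$ just before the tangency with $\partial K_{i^*}$ (using \cite{MR618545} to get minimum principal curvature $>\Theta$, as in the previous lemma), flow it forward past $y_0$, and observe that the flowed front $\Pi$ is strictly convex by \Cref{proposition:theta_convex} and — shrinking the angular aperture $\delta$ — can be taken to lie inside $S_K$, missing $\partial K$ near $y_0$. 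Third, using the strict convexity of $\Pi$ and \Cref{proposition:always_convex,proposition:convex_front_collision}, pick a perturbed direction so that the ray from $\Pi$ bends slightly to one side of $\partial K_{i^*}$, removing the tangency, and lands back at the next reflection point of $\gamma$ within its regular neighbourhood ($Z_i$ or $U_i$ as appropriate); here I reuse the bump-function interpolation ($\psi$, $s^*(s)$) and the diffeomorphism $\Phi$ from $T_{\partial K}S_K$ to the perturbed-direction bundle $\widehat\Pi$, exactly as in \Cref{lemma:N-non_tangent}, to splice the perturbed initial data smoothly into the path $\sigma$ over $[a-3\delta^*,a]$, leaving $\sigma^*=\sigma$ on $[0,a-3\delta^*]$. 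Finally, I would verify the four conclusions: (i) the new final ray is transversal to $\partial K$ (tangency removed and no new tangencies introduced, shrinking $\delta^*$); (ii) for $s<a-3\delta^*$ we have $\sigma^*=\sigma$, so no irregular points there; (iii) for $s\in[a-3\delta^*,a]$, every regular reflection of $\gamma$ persists within its regular neighbourhood, so no \emph{new} irregular points are created, giving the bound $n$, and the reflection count does not increase (it may drop, since a grazing tangency can disappear); (iv) the perturbation is supported strictly away from the reflection realising $x_1$, so $x_1$ is still on $\gamma_K^N(\sigma^*(a))$.

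\textbf{Main obstacle.} The delicate point is controlling what happens to the tangential reflection at $y_0$ under perturbation: generically a tangency either splits into two nearby transversal reflections or disappears entirely, and I must choose the direction of the perturbation so that it \emph{disappears} (so the reflection count does not go \emph{up}, which would violate conclusion (iii)), while simultaneously keeping the subsequent trajectory inside the regular neighbourhood $Y_0$ and the downstream regular neighbourhoods. This requires showing that the ``graze one side of $\partial K_{i^*}$'' direction is available, i.e. that the flowed convex front $\Pi$ meets the relevant neighbourhood in a way that lets the curve $p^*(s)$ reach a direction yielding a near-miss; strict convexity of $\Pi$ together with \Cref{proposition:convex_front_collision} (which confines the set of directions that hit $\partial K_{i^*}$ tangentially to a discrete set) is exactly what makes this possible, since it guarantees perturbations in most directions avoid tangency with $\partial K_{i^*}$. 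Beyond that, the argument is a careful but routine repetition of the bookkeeping in \Cref{lemma:N-non_tangent}, with the extra wrinkle that the interpolation must be arranged so the perturbed ray lands back on $\gamma$ at a transversal regular point on the far side of $y_0$.
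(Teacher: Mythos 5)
Your overall strategy --- perturb the final ray via a strictly convex front placed near the tangency so as to remove the tangency while keeping every reflection inside the regular neighbourhoods of $\gamma_K^N(\sigma(a))$, then splice the perturbed datum smoothly into $\sigma$ --- is the same as the paper's. But two points in your write-up are off, and the first is a genuine gap. Your justification of conclusion (iv), namely that ``the perturbation is supported strictly away from the reflection realising $x_1$'', cannot work as stated: a billiard ray is determined by its initial condition in $\TS$, so you cannot modify it on the segment around $y_0$ while freezing it elsewhere; any perturbation near $y_0$ propagates backward along the trajectory and moves the reflection near $x_1$ to a nearby point $x_1'$, which need not equal $x_1$ (nor even be irregular), and then (iv) fails. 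The paper secures (iv) by taking $\Sigma$ to be a small geodesic sphere centred at $x_1$ itself (with $\omega_1$ replaced by its reflection at $x_1$ when $y_0$ precedes $x_1$), so that every normal ray of $\widetilde\Pi=\genflow_{\tau'}^K(\widetilde\Sigma)$ passes exactly through $x_1$: the perturbation is a pivot about $x_1$, not a perturbation supported away from it. Your front ``just before the tangency'', transverse to $\gamma$, does not have this property; if you import the $\Phi$/$\widehat\Pi$ machinery of \Cref{lemma:N-non_tangent} you must also import its choice of base point.

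Second, your ``main obstacle'' is a misdiagnosis. For a strictly convex obstacle a ray near a grazing one either misses $\partial K_{i^*}$ or reflects transversally exactly once near $y_0$; it does not split into two reflections, so the reflection count cannot increase. Moreover, since $y_0$ is a \emph{regular} point, a transversal reflection inside its regular neighbourhood creates no new irregular point, so there is no need to engineer the ``miss'' branch at all. All that is required is a non-tangent perturbed ray meeting every regular neighbourhood of $\gamma$ transversally; the paper obtains this from the fact that the tangent directions in $\widetilde\Pi$ are covered by countably many codimension-one submanifolds, combined with the $N$-admissibility hypothesis that $\sigma$ is transversal at $\sigma(a)$ to the single stratum of $\{P_i^{(K,N)}\}$ containing it --- which also guarantees that $\gamma_K^N(\sigma(s^*))$ for $s^*<a$ close to $a$ is already non-tangent and free of irregular points, providing the other end of the splice. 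With the pivot relocated to $x_1$ and the branch-selection worry dropped, your argument coincides with the paper's.
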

\begin{proof}
	Since $y_0$ is a regular point, there is a neighbourhood $W\subseteq\partial K_{i^*}$ of $y_0$ such that $W = W\cap\partial L$. Set $\gamma = \gamma_K^N(\sigma(a))$, and let $x_1$ be the first irregular point of $\gamma$. Note that since $\sigma$ is $N$-admissible, $\gamma$ can have at most one tangent point and hence $x_1$ is a transversal point of reflection. Let $\omega_1$ be the reflected direction of $\gamma$ at $x_1$. If $y_0$ occurs prior to $x_1$ along $\gamma$ then we replace $\omega_1$ with its reflection on $\partial K$ at $x_1$ and proceed in the same manner. Set $\varepsilon,\delta > 0$ sufficiently small so that the minimum principal curvature of the submanifold
		\[
			\Sigma = \{\exp_{x_1}(\varepsilon\omega):\omega\in\uT{S_{x_1}},\norm{\omega-\omega_1}<\delta\}
		\]
		is greater than $\Theta$ (\cite{MR618545}). Let $N_\Sigma(x)$ be the outward unit normal to $\Sigma$ at $x$, and set $\widetilde\Sigma = \{(x,N_\Sigma(x)):x\in\Sigma\}$. Let $\tau>0$ be such that $\genflow_\tau^K(x_1,\omega_1)=(y_0,u_0)$, where $u_0$ is the reflected direction of $\gamma$ at $y_0$. Then for some $0<\tau'<\tau$ close to $\tau$ set $\widetilde\Pi = \genflow_{\tau'}^K(\widetilde\Sigma)$, and $\Pi = \pr_1(\widetilde\Pi)$. Then by \Cref{proposition:theta_convex}, $\Pi$ is a strictly convex hypersurface in $S$. Note that we may pick $\tau'$ (while possibly shrinking $\delta$) such that $\Pi\cap\partial K = \emptyset$ and such that there is a neighbourhood $\Omega\subseteq S$ of $y_0$ such that $\Pi\subseteq \Omega$ and $\partial K_i\cap\Omega = \emptyset$ for all $i\neq i^*$. Now let $\alpha_0\in\widetilde\Pi$ be the point such that $\genflow_{\tau-\tau'}(\alpha_0) = (y_0,u_0)$. 
		
		
		Recall that we denote the reflection times of $\gamma^+_K(\sigma(a))$ by $0=t_0(\sigma(a))<t_1(\sigma(a))<t_2(\sigma(a))<\dots$. Let $\rho<N$ be such that $\gamma_K^+(\sigma(a))(t_{\rho}(\sigma(a))) = y_0$. i.e., $y_0$ is the $\rho$-th reflection of $\gamma_K^+(\sigma(a))$. 
		Since $\gamma$ has no tangential reflections except for $y_0$, then possibly shrinking $\delta$, for each $\alpha\in\widetilde\Pi$ there is a corresponding $\tilde\sigma(\alpha)\in\TTS$, and a time $\tau(\alpha)\in\reals$ such that $\genflow_{\tau(\alpha)}(\alpha)=\tilde\sigma(\alpha)$. Denote this mapping as $\Phi(\alpha) = \genflow_{\tau(\alpha)}(\alpha)$, for all $\alpha\in\widetilde\Pi$. Then $\Phi$ is a diffeomorphism onto the $n-1$ dimensional submanifold $\Phi(\widetilde\Pi)\subseteq\TTS$. 
		
		Note that the set of points $\alpha\in\widetilde\Pi$ such that $\gamma_K^+(\alpha)$ is tangent to $\partial K$ is covered by a countable union of codimension 1 submanifolds. We can therefore pick a point $\alpha^*\in\widetilde\Pi$ such that $\gamma_K^+(\alpha^*)$ is not tangent to $\partial K$. Furthermore, since $\gamma$ is tangent to $\partial K$, by \Cref{proposition:p_i} there is some $P\in\{P_i^{(K,N)}\}$ such that $\sigma(a)\in P$. Since $\sigma$ is $N$-admissible, it follows by definition that $\sigma$ is transversal to $P$ at $\sigma(a)$ and $\sigma(a)\not\in P'$ for all $P'\in\{P_i^{(K,N)}\}\backslash\{P\}$. Thus we may find $\alpha^*$ sufficiently close to $\alpha_0$ such that $\gamma_K^N(\Phi^{-1}(\alpha^*))$ is not tangent to $\partial K$ and $\gamma_K^N(\Phi^{-1}(\alpha^*))$ will also intersect $\partial K$ transversally within any regular neighbourhood that $\gamma$ intersects.

	Moreover, picking $0<s^*<a$ sufficiently close to $a$, it follows that $\gamma_K^N(\sigma(s^*))$ is not tangent to $\partial K$ and has no irregular points (by assumption). Note that both $\gamma_K^N(\sigma(s^*))$ and $\gamma_K^N(\Phi^{-1}(\alpha^*))$ transversally intersect $\partial K$ within every regular neighbourhood of $\gamma$. Therefore we can smoothly extend $\sigma|_{[0,s^*]}$ to $\sigma^*:[0,a]\to\TTS$ such that $\sigma^*|_{[0,s^*]}=\sigma|_{[0,s^*]}$, while $\sigma^*(a) = \Phi^{-1}(\alpha^*)$ and $\gamma_K^N(\sigma^*(s))$ transversally intersects $\partial K$ within every regular neighbourhood of $\gamma$ for all $s\in[s^*,a]$. Thus $\gamma_K^N(\sigma^*(s))$ has at most $n$ irregular points for all $s\in[s^*,a]$.
	By \Cref{proposition:p_i}, \Cref{proposition:tangent_twice} and Thom's transversality theorem, we may assume that $\sigma^*$ satisfies condition (b) of $N$-admissible paths. 
	It follows that $\sigma^*(s)$ is $N$-admissible, and $\gamma_K^N(\sigma^*(a))$ contains $x_1$ (by construction) along with at most $n-1$ other irregular points.
\end{proof}

\Cref{lemma:N-tangent_irregular} is the most technical of the three lemmas in this section. It addresses the case where the final ray $\gamma_K^N(\sigma(a))$ is tangent to $\partial K$ at an irregular point. Since this case cannot be reduced to the first case, as in \Cref{lemma:N-tangent_regular}, we have to eliminate the irregular tangent point directly. The difficulty presents itself in having to work around the tangent irregular point, where unlike the prior case, there is no regular neighbourhood to remain within. When perturbing the ray $\gamma_K^N(\sigma(a))$ we then have two options, the first is "lifting" the ray off the irregular tangent point, and thereby reducing the number of reflections, as well as irregular points. One has to take care in doing so to ensure that no new tangent points are created, as well as to remain within the regular neighbourhoods that $\gamma_K^N(\sigma(a))$ intersects. The other option is to translate the tangent point of reflection along $\partial K$ into a nearby regular neighbourhood which is ensured to exist due to the requirements of the $N$-admissible path $\sigma$. Once again we must remain vigilant to not introduce any new tangent points or irregular points along the new ray.

\begin{lemma}\label{lemma:N-tangent_irregular}
		Let $\sigma:[0,a] \to \TTS$ be an $N$-admissible path such that:
	\begin{enumerate}[(i)]
		\item $\gamma_K^N(\sigma(a))$ is tangent to $\partial K$ at some irregular point $y_0\in\partial K_{i^*}$.
		\item $\gamma_K^N(\sigma(s))$ contains no irregular points for all $s\in [0,a)$.
		\item $\gamma_K^N(\sigma(a))$ contains exactly $n$ irregular points (for some integer $1<n\leq N$).
		\item $y_0$ is not the first irregular point of $\gamma_K^N(\sigma(a))$.
	\end{enumerate}
	Then for some sufficiently small $\delta > 0$ there exists an $N$-admissible path $\sigma^*:[0,a]\to\TTS$ such that:
	\begin{enumerate}[(i)]
		\item $\gamma_K^N(\sigma^*(a))$ is not tangent to $\partial K$.
		\item $\gamma_K^N(\sigma^*(s))$ contains no irregular points for all $s\in [0,a-3\delta)$.
		\item $\gamma_K^N(\sigma^*(s))$ contains at most $n-1$ irregular points and has at most the same number of reflections as $\gamma_K^N(\sigma(a))$ for all $s\in [a-3\delta,a]$.
		\item $\gamma_K^N(\sigma^*(a))$ contains the first irregular point of $\gamma_K^N(\sigma(a))$.
	\end{enumerate}
\end{lemma}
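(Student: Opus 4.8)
The plan is to handle the irregular tangent point $y_0$ by a case split, mirroring the structure of \Cref{lemma:N-non_tangent} and \Cref{lemma:N-tangent_regular} but without the luxury of a regular neighbourhood around $y_0$. Let $\gamma=\gamma_K^N(\sigma(a))$, let $x_1,\dots,x_n$ be its irregular points in order of appearance, and by hypothesis (iv) we have $y_0=x_k$ for some $k\geq 2$. As in the earlier proofs, for each $i$ there is an open set $U_i\subseteq\partial K$ with $x_i\in\partial U_i$ and $U_i\cap\partial L=U_i$, built from the regular reflection points $x_i(s)$ of the nearby rays $\gamma_K^N(\sigma(s))$, $s<a$. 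Now set up a small strictly convex front $\Sigma$ issued from $x_1$ (as in \cite{MR618545}), flow it forward by $\genflow^K$ to a strictly convex front $\Pi=\pr_1(\widetilde\Pi)$ just before the reflection at $y_0$ (using \Cref{proposition:theta_convex} and \Cref{lemma:always_convex} to keep $\Pi$ strictly convex and disjoint from $\partial K$), and let $\Phi$ be the diffeomorphism sending a point $\alpha\in\widetilde\Pi$ (with its distinguished normal) to the corresponding vector $\Phi^{-1}(\alpha)\in\TTS$ generating a ray through $\alpha$. The key point is that $\sigma$ being $N$-admissible forces $\sigma(a)\in P$ for exactly one $P\in\{P_i^{(K,N)}\}$, transversally; this transversality is what gives room to perturb.

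The main body is the case analysis at $y_0$. \textbf{Case 1 (lifting off $y_0$):} shrink $\widetilde\Pi$ and choose $\alpha^*\in\widetilde\Pi$ close to the point $\alpha_0$ with $\genflow_{\tau-\tau'}(\alpha_0)=(y_0,u_0)$, such that $\gamma_K^+(\alpha^*)$ \emph{misses} the component $\partial K_{i^*}$ near $y_0$ entirely — this is possible precisely because the tangency means $\gamma$ grazes $\partial K_{i^*}$, so on one side of $P$ the ray no longer meets that obstacle. Then $\gamma_K^N(\Phi^{-1}(\alpha^*))$ has one fewer reflection and, crucially, loses the irregular point $y_0$; using the $U_i$'s as in \Cref{lemma:N-non_tangent} one checks the remaining irregular points number at most $n-1$. \textbf{Case 2 (sliding $y_0$ into a regular neighbourhood):} if lifting off is obstructed (e.g. the geometry forces the ray to still meet $\partial K_{i^*}$), instead pick $\alpha^*$ so that the reflection point near $y_0$ is displaced along $\partial K_{i^*}$ into a regular neighbourhood — such a neighbourhood exists on at least one side because $y_0$ lies on $\partial K_{i^*}$ and, by \Cref{proposition:p_i} applied near $y_0$, the set of directions through $\widetilde\Pi$ landing at irregular points of $\partial K_{i^*}$ is contained in a countable union of codimension-$1$ submanifolds, hence avoidable. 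After either case, one must also secure that no \emph{new} tangencies are introduced: since $\gamma$'s other reflections are transversal and $\alpha^*$ is close to $\alpha_0$, all of $\gamma_K^N(\Phi^{-1}(\alpha^*))$'s other reflections stay transversal by openness; the single possible tangency is removed by choosing $\alpha^*$ off the (codimension-$1$, by \Cref{proposition:p_i}) tangency locus inside $\widetilde\Pi$.

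Finally, assemble the path: pick $s^*<a$ close to $a$ so that $\gamma_K^N(\sigma(s^*))$ is transversal with no irregular points, then use a cutoff function $\psi$ (as in \cite{LEESMOOTH}) to interpolate smoothly between $\sigma|_{[0,s^*]}$ and the vector $\Phi^{-1}(\alpha^*)$ over $[s^*,a]$, keeping every intermediate ray transversal within all regular neighbourhoods of $\gamma$; shrink $\delta$ so that $s^*=a-3\delta$ works and so that $t^*(s)$ (the escape time to $\partial S$) stays smooth, giving $\sigma^*:[0,a]\to\TTS$. One re-runs the transversality argument (\Cref{proposition:p_i}, \Cref{proposition:tangent_twice}, Thom) so that $\sigma^*$ satisfies condition (b) of $N$-admissibility, and reads off properties (i)--(iv) from the construction: (i) and (iii) from the case analysis, (ii) since $\sigma^*=\sigma$ on $[0,a-3\delta]$, and (iv) because the perturbation happens strictly after the first irregular point $x_1$ (as $k\geq 2$), so $x_1$ is untouched and remains on $\gamma_K^N(\sigma^*(a))$.

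The hard part will be \textbf{Case 2} and the verification that sliding $y_0$ does not create a cascade of new irregular points downstream: displacing one reflection point perturbs the entire tail of the trajectory, and one must argue — via the $U_i$ construction and continuity of the billiard flow on the transversal part — that every later reflection stays inside the regular neighbourhood it previously occupied, so the count of irregular points genuinely drops to $n-1$ rather than merely staying bounded. Deciding cleanly \emph{when} Case 1 versus Case 2 applies (i.e.\ characterising which side of $P$ lifts the ray off $\partial K_{i^*}$) is the delicate geometric input, and is presumably where the dimension hypothesis $m\geq 3$ enters through the codimension counts in \Cref{proposition:p_i} and \Cref{proposition:tangent_twice}.
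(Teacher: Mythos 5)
Your overall architecture matches the paper's: a strictly convex front issued from the first irregular point $x_1$, flowed forward to just before the tangency at $y_0$, a perturbation of the arrival vector there, and a case split according to whether the perturbed ray misses $\partial K_{i^*}$ near $y_0$ or reflects on it. However, there is a genuine gap in your Case 2. You justify the existence of a regular neighbourhood into which the reflection near $y_0$ can be slid by asserting that ``the set of directions through $\widetilde\Pi$ landing at irregular points of $\partial K_{i^*}$ is contained in a countable union of codimension-$1$ submanifolds, hence avoidable,'' citing \Cref{proposition:p_i}. That proposition concerns \emph{tangency}, not irregularity, and nothing in the paper (or a priori) bounds the codimension of the set of irregular points --- before the main theorem is proved, the irregular set could in principle be an open subset of $\partial K$, so no genericity or transversality argument can avoid it. The correct source of the regular neighbourhood is hypothesis (ii) of the lemma itself: in Case 2 the nearby rays $\gamma_K^N(\sigma(s))$, $s<a$, \emph{do} reflect on $\partial K_{i^*}$ near $y_0$, and since those rays carry no irregular points, each such reflection point is regular; the union of their regular neighbourhoods yields an open set $V\subseteq\partial K_{i^*}$ with $y_0\in\partial V$, exactly as in the construction of the sets $U_i$ at the non-tangential irregular points. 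One must then guarantee that the perturbed ray lands in $V$ rather than on the other side of $y_0$, and this is where the paper does real work that your sketch omits: it shows that the front $\widetilde Y$ is transversal to the codimension-$1$ submanifold $P_\lambda^*$ (via a parallel-transport/Jacobi-field argument along the geodesic $\alpha$ in $Y$ tangent to the transported normal $N^*$), so that $\widetilde Y\setminus P_\lambda^*$ has exactly two components, and the component $Y^*=\widetilde Y\cap U_i^*$ corresponding to the side of $P_\lambda^{(K,N)}$ containing $\sigma([a-3\delta,a))$ is precisely the side on which the new reflection falls in $V$ (Case 2) or on which $\partial K_{i^*}$ is missed entirely (Case 1). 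Without this, your choice of $\alpha^*$ has no guarantee of landing on the good side.

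Two further remarks. First, the ``cascade downstream'' issue you flag as the hard part is resolved in the paper by pre-shrinking the neighbourhoods $\Omega_{\zeta_j}$ of the $N-n$ regular reflection points so that $\pr_1(\Omega_{\zeta_j})$ lies inside a regular neighbourhood, and composing the shift-along-flow diffeomorphisms $F_j$ through (or past) $y_0$; the case split is needed precisely because no such diffeomorphism exists through a tangential reflection, so $F_j$ for $j>j^*$ must be manufactured differently in each case. You should not leave this unverified, since it is where the count actually drops to $n-1$. Second, the dimension hypothesis $m\geq 3$ plays no role in this lemma --- it enters only through the path-connectivity of the complement of the trapping set in \Cref{prop:ttts-path-connected} --- so your closing speculation misattributes it.
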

\begin{proof}
	Since $\gamma_K^N(\sigma(a))$ is tangent to $\partial K$ at $y_0$, it follows that $\sigma(a)\in P_\lambda^{(K,N)}$ for some $\lambda$ (\Cref{proposition:p_i}). By the definition of $N$-admissible paths, $\sigma(a)\not\in P_{\lambda'}^{(K,N)}$ for all $\lambda'\neq\lambda$. Thus we can pick a path-connected neighbourhood $W_0\subseteq\TTS$ of $\sigma(a)$ such that $W_0\cap P_{\lambda'}^{(K,N)} = \emptyset$ for all $\lambda'\neq\lambda$. We may shrink $W_0$ around $\sigma(a)$ such that $W_0\setminus P_\lambda^{(K,N)}$ has exactly two path components, since $P_\lambda^{(K,N)}$ has codimension 1 in $W_0$. Pick $\delta>0$ sufficiently small such that $\sigma([a-3\delta,a))$ is contained entirely within one component of $W_0\setminus P_\lambda^{(K,N)}$, and denote this component by $W_0^*$. Set $\gamma = \gamma_K^N(\sigma(a))$. For each irregular point $x_1,\dots,x_n$ (in order of appearance along $\gamma$) denote the reflected direction of $\gamma$ at $x_j$ by $\omega_j$. Set $1<i\leq n$ such that $x_i = y_0$ and denote $u_0 = \omega_i$. Let $\tau>0$ be such that $\genflow_\tau(x_1,\omega_1)=(y_0,u_0)$. That is, $\tau$ is the time $\gamma$ requires to travel between the first irregular point and the tangential irregular point. For some small $\varepsilon,\widetilde\delta>0$, let
	\[
		X = \{\exp_{x_1}(\varepsilon v) : \norm{v-\omega_1}<\widetilde\delta, v\in\uT{S}_{x_1} \}.
	\]
	We choose $\varepsilon$ sufficiently small so that the minimum principal curvature of $X$ is greater than $\globalcurv$ (\cite{MR618545}).
	Let $\nu(x)$ be the outward unit normal to $X$ at $x$, and denote $\widetilde X = \{(x,\nu(x)):x\in X\}$. Pick $\tau^*<\tau-\varepsilon$ close to $\tau-\varepsilon$, and consider $\widetilde Y = \genflow_{\tau^*}(\widetilde X)$. Set $Y=\pr_1(\widetilde Y)$, and $y_0^* = \pr_1\circ\genflow_{\tau^*+\varepsilon}(x_1,\omega_1)\in Y$. By \Cref{proposition:theta_convex}, $Y$ is a strictly convex $n-1$ dimensional submanifold of $S_K$. Note that by shrinking $\widetilde\delta$ as necessary we can ensure that $Y\cap \partial K=\emptyset$. Since $\gamma$ only has transversal reflections up to $x_1$ there are path-connected neighbourhoods $U_0\subseteq W_0$ of $\sigma(a)$ and $U_1\subseteq T_{\partial K} S_K$ of $(x_1,\omega_1)$ such that the map $\Lambda:U_0\to U_1$ given by the shift along the billiard flow is a diffeomorphism. Thus $\Lambda(U_0\cap P_\lambda^{(K,N)})$ is a codimension 1 submanifold of $U_1$ (\Cref{fig:u0u1}). We may shrink $\widetilde\delta$ so that $\widetilde X\subseteq \flow_\varepsilon(U_1)$. Set $U_i=\genflow_{\tau^*+\varepsilon}(U_1)$, it follows that $\widetilde Y\subseteq U_i$ (\Cref{fig:u1ui}). Note that $P_\lambda^* = \genflow_{\tau^*+\varepsilon}\circ\Lambda(U_0\cap P_\lambda^{(K,N)})$ is a codimension 1 submanifold of $U_i$. We claim that $\widetilde Y$ is transversal to $P_\lambda^*$.

	\begin{figure}
		\center
		\begin{tikzpicture}[scale=0.7]
			\coordinate (c1) at (-6,0);
			\draw[dashed] (c1) circle(3cm);
			\node at ($(c1) + (-45:3.5)$) {$U_0$};
			
			\coordinate (a1) at ($(c1) + (75:3.5)$);
			\coordinate (a2) at ($(a1) + (5,0)$);
			\coordinate (p1) at ($(c1) + (105:1.5)$);
			\coordinate (p2) at ($(c1) + (275:1.5)$);
			\coordinate (b1) at ($(c1) + (255:3.5)$);
			\coordinate (b2) at ($(b1) + (5,0)$);
			\draw[thick, name path = pr, <->] (a1) .. controls (p1) and (p2) .. (b1) node[below] {$P_\lambda^{(K,N)}$} node[sloped, midway, above right] {$$};
				
			\coordinate (s1) at ($(c1) + (0:2.5)$);
			\coordinate (s2) at ($(c1) + (180:0.5)$);
			\coordinate (sp1) at ($(c1) + (35:1.25)$);
			\coordinate (sp2) at ($(c1) + (-45:1.75)$);
			\draw[name path = sigma, opacity = 0] (s1) .. controls (sp1) and (sp2) .. (s2);
			\draw[name intersections = {of = sigma and pr, by={s0}}, opacity=0] (s1)--(s0);
			\fill (s0) circle (0.05cm) node[left] {$\sigma(a)$};
			
			\begin{scope}
				\clip (a1) .. controls (p1) and (p2) .. (b1) -- (b2) -- (a2) -- cycle;
				\draw [dashed, pattern=dots, pattern color=gray, opacity = 0.4] (c1) circle(3cm);
				\draw[<-] (s1) .. controls (sp1) and (sp2) .. (s2) node[midway, right] {$\sigma(s)$};
			\end{scope}
			\node at ($(c1) + (-60:2)$) {$W^*_0\cap U_0$};
			
			\coordinate (c2) at (3,0);
			\draw[dashed] (c2) circle(3cm);
			\node at ($(c2) + (-45:3.5)$) {$U_1$};
			
			\coordinate (aa1) at ($(c2) + (75:3)$);
			\coordinate (aa2) at ($(aa1) + (5,0)$);
			\coordinate (pp1) at ($(c2) + (105:1.5)$);
			\coordinate (pp2) at ($(c2) + (275:1.5)$);
			\coordinate (bb1) at ($(c2) + (255:3)$);
			\coordinate (bb2) at ($(bb1) + (5,0)$);
			\draw[thick, name path = ppr] (aa1) .. controls (pp1) and (pp2) .. (bb1) node[below] {$\Lambda(U_0\cap P_\lambda^{(K,N)})$};
				
			\coordinate (ss1) at ($(c2) + (0:2.5)$);
			\coordinate (ss2) at ($(c2) + (180:0.5)$);
			\coordinate (ssp1) at ($(c2) + (35:1.25)$);
			\coordinate (ssp2) at ($(c2) + (-45:1.75)$);
			\draw[name path = ssigma, opacity = 0] (ss1) .. controls (ssp1) and (ssp2) .. (ss2);
			\draw[name intersections = {of = ssigma and ppr, by={ss0}}, opacity=0] (ss1)--(ss0);
			\fill (ss0) circle (0.05cm) node[left] {$(x_1,u_1)$};
			
			\begin{scope}			
				\clip (aa1) .. controls (pp1) and (pp2) .. (bb1) -- (bb2) -- (aa2) -- cycle;
				\draw [dashed, pattern=dots, pattern color=gray, opacity = 0.4] (c2) circle(3cm);
				\draw[<-] (ss1) node[left, rotate=20] {$\Lambda(\sigma(s))$} .. controls (ssp1) and (ssp2) .. (ss2);
			\end{scope}
			\node at ($(c2) + (-60:2)$) {$\Lambda(W^*_0\cap U_0)$};
			
			\draw[bend left, ->, shorten <= 0.5cm, shorten >= 0.5cm] ($(c1) + (45:3)$) to ($(c2) + (135:3)$);
			\node at ($(c1) + (35:5.5)$) {$\Lambda$};
		\end{tikzpicture}
		\caption{The structure of $U_0$ and $U_1$}\label{fig:u0u1}
	\end{figure}
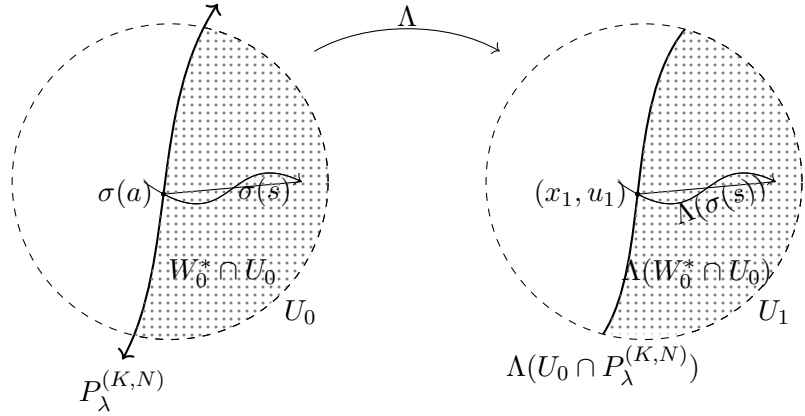
	
	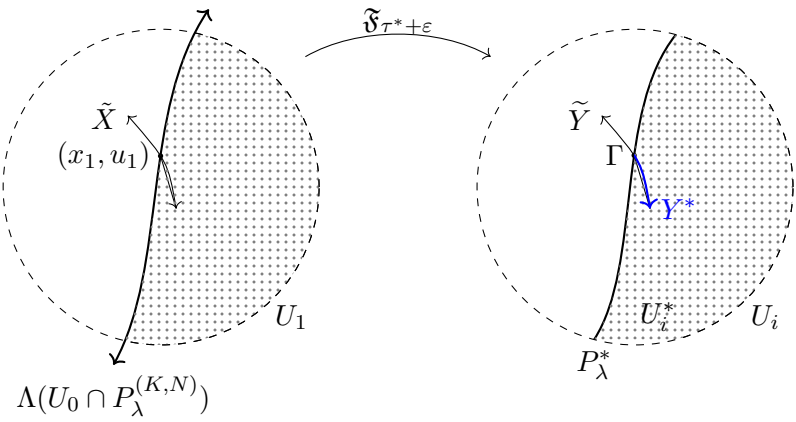
\begin{figure}
		\center
		\begin{tikzpicture}[scale=0.7]
			\coordinate (c1) at (-6,0);
			\draw[dashed] (c1) circle(3cm);
			\node at ($(c1) + (-45:3.5)$) {$U_1$};
			
			\coordinate (a1) at ($(c1) + (75:3.5)$);
			\coordinate (a2) at ($(a1) + (5,0)$);
			\coordinate (p1) at ($(c1) + (105:1.5)$);
			\coordinate (p2) at ($(c1) + (275:1.5)$);
			\coordinate (b1) at ($(c1) + (255:3.5)$);
			\coordinate (b2) at ($(b1) + (5,0)$);
			\draw[thick, name path = pr, <->] (a1) .. controls (p1) and (p2) .. (b1) node[below] {$\Lambda(U_0\cap P_\lambda^{(K,N)})$};
				
			\coordinate (s1) at ($(c1) + (-55:0.5)$);
			\coordinate (s2) at ($(c1) + (115:1.5)$);
			\coordinate (sp1) at ($(c1) + (75:0.5)$);
			\draw[<->, name path = sigma] (s1) .. controls (sp1) .. (s2) node[left] {$\tilde X$};
			\draw[name intersections = {of = sigma and pr, by={s0}}, opacity=0] (s1)--(s0);
			\fill (s0) circle (0.05cm) node[left] {$(x_1,u_1)$};
			
			\begin{scope}
				\clip (a1) .. controls (p1) and (p2) .. (b1) -- (b2) -- (a2) -- cycle;
				\draw [dashed, pattern=dots, pattern color=gray, opacity = 0.4] (c1) circle(3cm);
			\end{scope}
			\node at ($(c1) + (-80:2.5)$) {$$};
			
			\coordinate (c2) at (3,0);
			\draw[dashed] (c2) circle(3cm);
			\node at ($(c2) + (-45:3.5)$) {$U_i$};
			
			\coordinate (aa1) at ($(c2) + (75:3)$);
			\coordinate (aa2) at ($(aa1) + (5,0)$);
			\coordinate (pp1) at ($(c2) + (105:1.5)$);
			\coordinate (pp2) at ($(c2) + (275:1.5)$);
			\coordinate (bb1) at ($(c2) + (255:3)$);
			\coordinate (bb2) at ($(bb1) + (5,0)$);
			\draw[thick, name path = ppr] (aa1) .. controls (pp1) and (pp2) .. (bb1) node[below] {$P^*_\lambda$};

			\coordinate (ss1) at ($(c2) + (-55:0.5)$);
			\coordinate (ss2) at ($(c2) + (115:1.5)$);
			\coordinate (ssp1) at ($(c2) + (75:0.5)$);
			\draw[->, name path = ssigma] (ss1) .. controls (ssp1) .. (ss2) node[left] {$\widetilde Y$};
			\draw[name intersections = {of = ssigma and ppr, by={ss0}}, opacity=0] (ss1)--(ss0);
			\fill (ss0) circle (0.05cm) node[left] {$\Gamma$};
			
			\begin{scope}			
				\clip (aa1) .. controls (pp1) and (pp2) .. (bb1) -- (bb2) -- (aa2) -- cycle;
				\draw [dashed, pattern=dots, pattern color=gray, opacity = 0.4] (c2) circle(3cm);
				\draw[<->, thick, blue, name path = sigma] (ss1) node[right] {$Y^*$} .. controls (ssp1) .. (ss2);
			\end{scope}
			\node at ($(c2) + (-80:2.5)$) {$U_i^*$};
			
			\draw[bend left, ->, shorten <= 0.5cm, shorten >= 0.5cm] ($(c1) + (45:3)$) to ($(c2) + (135:3)$);
			\node at ($(c1) + (35:5.5)$) {$\genflow_{\tau^*+\varepsilon}$};
		\end{tikzpicture}
		\caption{Construction of $\widetilde X$, $\widetilde Y$ and $Y^*$}\label{fig:u1ui}
	\end{figure}

	Since $\gamma$ is tangent to $\partial K_{i^*}$ at $y_0$, the outward unit normal $N_0$ to $\partial K_{i^*}$ at $y_0$ is also normal to $\gamma$ at $y_0$. Let $N^*$ be the vector field along $\gamma$ given by parallel translation of $N_0$ along $\gamma$ from $y_0\in\partial K_{i^*}$ to $y_0^*\in Y$. Since $\gamma$ intersects $Y$ orthogonally (by construction) it follows that $N^*$ is tangential to $Y$ at $y^*_0$. For some small $\delta^*>0$ let $\alpha:(-\delta^*,\delta^*)\to Y$ be the smooth unit-speed geodesic in $Y$ such that $\alpha(0) = y_0^*$ and $\alpha'(0) = N^*$. Let $\widetilde\alpha(s) = (\alpha(s),\mu(\alpha(s))$ where $\mu(y)$ is the outward unit normal to $Y$ at $y$. Also let $\tau_0 = \tau - \tau^* - \varepsilon$, i.e $\tau_0$ is the time $\gamma$ takes to travel between $y_0^*$ and $y_0$. Setting $\beta(s) = \pr_1\circ\flow_{\tau_0}(\widetilde\alpha(s))$ we note that $\beta((-\delta^*,\delta^*))$ is diffeomorphic to $\alpha^*$ since $Y$ is strictly convex. Now provided $\delta^*$ is sufficiently small we must have $\beta((-\delta^*,0))\subseteq K_{i^*}$ and $\beta((0,\delta^*))\cap K_{i^*} = \emptyset$ since $\beta'(0) = N_0$. Consider the smooth geodesics
	\[
		r(s) = \{\pr_1\circ\flow_t(\widetilde\alpha(s)):0\leq t\leq\tau_0\}.
	\]
	For $s\in (-\delta^*,0)$, the smooth geodesics $r(s)$ must intersect $\partial K_{i^*}$. Moreover, $r(s)$ cannot be tangent to $\partial K_{i^*}$ provided it remains sufficiently close to $y_0$, owing to the strict convexity of $\partial K_{i^*}$. Furthermore when $s\in(0,\delta^*)$, since the Jacobi field of $r(s)$ at $r(0)$ is precisely $N^*$ (the vector field along $\gamma$ obtained from the outward unit normal to $\partial K_{i^*}$), and  since $r(0)$ is tangent to $\partial K_{i^*}$ it follows that $r(s)$ does not intersect $\partial K_{i^*}$ at all for any $s\in(0,\delta^*)$, provided $\delta^*$ is sufficiently small. It follows that the curve $\widetilde\alpha$ must be transversal to $P^*_\lambda$, and since $P^*_\lambda$ has codimension 1 in $U_i$, by shrinking $\widetilde\delta$ we can conclude that $\widetilde Y$ is transversal to $P^*_\lambda$ as claimed.
	
	Let $\gamma_1 = \gamma(t_1),\dots,\gamma_N =\gamma(t_N)$ be the points of reflection of $\gamma$ (possibly repeating the last point of reflection as needed until $N$ reflections are achieved). Denote by $j^*$ the integer such that $\gamma_{j^*} = y_0$. Then for each $j<j^*$, since $\gamma_j$ is a transversal point of reflection, there is a diffeomorphism $F_j:W_0\to\Omega_j$ (possibly shrinking $W_0$), where $\Omega_j\subseteq T_{\partial K} S_K$ is an open neighbourhood of $(\gamma_j,\dot\gamma_j(t_j))\in T_{\partial K} S_K$, such that $F_j$ is given by the shift along the billiard flow. Note that since $\gamma_{j^*}$ is a tangential point of reflection, there is no analogous diffeomorphism and open neighbourhood as there is for the transversal reflections of $\gamma$ prior to $\gamma_{j^*}$. We now consider two cases, both of which will produce diffeomorphisms $F_j$ and open neighbourhoods $\Omega_j$ for $j^*<j\leq N$, after which we shall proceed in the same manner. First suppose that there exists a sufficiently small $\delta$, such that for all $s\in(a-3\delta,a)$ the rays $\gamma_K^N(\sigma(s))$ do not intersect $\partial K_{i^*}$ near $y_0$. In this case, since each such ray $\gamma_K^N(\sigma(s))$ has no irregular points, there are diffeomorphism $\widetilde F_j:W_0^*\to\Omega_j$ for all $j^*\leq j \leq N$, with $\widetilde F_j$ and $\Omega_j$ defined analogously. We then define $F_j:\Omega_{j^*}\to\Omega_j$ as $F_j = \widetilde F_j \circ \widetilde F_{j^*}^{-1}$.
	In the second case, suppose that no such $\delta$ exists, hence for all $s\in(a-3\delta,a)$ the rays $\gamma_K^N(\sigma(s))$ intersect $\partial K_{i^*}$ near $y_0$. Since each such point of reflection is a regular point, there is a regular neighbourhood $V\subseteq\partial K_{i^*}$ such that $y_0\in\partial V$. Now there is a path-connected open neighbourhood $\Omega_{j^*}\subseteq T_{\partial K} S_K$ of $(y_0,u_0)\in\uT{\partial K_{i^*}}$ such that for each $j^*<j\leq N$ the map $F_j:\Omega_{j^*}\to\Omega_j$ is a diffeomorphism given by the shift along the billiard flow onto the neighbourhood $\Omega_j\subseteq T_{\partial K} S_K$ of $(\gamma_j,\dot\gamma_j(t_j))$. We shall now continue in the same manner for both cases, except when specified otherwise.
	
	Now there are $N-n$ regular points of reflection along $\gamma$ (possibly counting the last regular point multiple times). Denote these by $\gamma_{\zeta_1},\dots,\gamma_{\zeta_{N-n}}$. For each regular point there exists a regular neighbourhood $\Omega^*_{\zeta_j}\subseteq\partial K$. Then for each $\zeta_j$ we may shrink $\Omega_{\zeta_j}$ such that $\pr_1(\Omega_{\zeta_j})\subseteq\Omega_{\zeta_j}^*$. In doing so we may have to shrink $W_0$ and $\Omega_{j^*}$, as well as $\delta$ and $\widetilde\delta$ accordingly. Recall that $W_0\setminus P_\lambda^{(K,N)}$ has exactly two path components. Since $U_0\subseteq W_0$ it follows that $U_i\setminus P_\lambda^*$ has exactly two path components. Therefore $\widetilde Y\setminus P_\lambda^*$ also has exactly two path components, also owing to the fact that $\widetilde Y\cap P_\lambda^*$ has codimension 1 in $\widetilde Y$ since $P_\lambda^*$ is transversal to $\widetilde Y$. Recall that $\delta$ was chosen so that $\sigma(s)\in W_0^*$ for all $s\in [a-3\delta,a)$. If necessary, we shrink $\delta$ such that $\sigma(s)\in W_0^*\cap U_0$ for all $s\in [a-3\delta,a)$. Denote $U_i^* = \genflow_{\tau^*+\varepsilon}\circ\Lambda(W_0\cap U_0)$ (\Cref{fig:u1ui}). It follows that $U_i^*$ is one of the two path components of $U_i\setminus P_\lambda^*$. We must now deviate based on our two cases again. In the first case, by construction, every ray $\gamma_K^{N-j^*+1}(u)$ for $u\in U_i^*$ will not intersect $\partial K_{i^*}$ near $y_0$, and will pass through $N-j^*+1$ regular neighbourhoods, thus having no irregular points at all. In the second case, where the regular neighbourhood $V$ exists we proceed as follows. We may shrink $U_0$ and $U_1$ such that for any $u\in U_i^*$ there exists a minimum time $0<\ell_u<\varepsilon$ such that $\gamma_K^{N-j^*+1}(u)(\ell_u)\in V$. Note that such a ray may have some irregular points, however its first point of reflection will be a regular point in $V$ near $y_0$. Now in both cases, for any $u\in U_i^*$ the ray $\gamma_K^{N-j^*+1}(u)$ will not be tangent to $\partial K$ by our construction (since $u\not\in P_\lambda^*$). Now pick any $u^*\in Y^* = \widetilde Y\cap U_i^*$ (\Cref{fig:u1ui}) sufficiently close to $(y_0^*,\mu(y_0^*))\in\partial Y^*$. Set $\widetilde\sigma = \Lambda^{-1}\circ\genflow_{-\tau^*-\varepsilon}(u^*)\in W_0^*$. Recall that $W_0^*\cap(\cup_{\lambda'}P_{\lambda'}^{(K,N)})=\emptyset$, hence any $w\in W_0$ generates a ray $\gamma_K^N(w)$ which is not tangent to $\partial K$. Therefore, by our construction, the ray $\gamma_K^N(\widetilde\sigma)$ generates a ray which is not tangent to $\partial K$, has its first irregular point at $x_1$ and has at most $n-1$ irregular points. This follows since $\gamma_K^N(\widetilde\sigma)$ must pass through the $N-n$ regular neighbourhoods $\Omega_{\zeta_j}$ which we prescribed earlier. We may now smoothly extend $\sigma$ to a smooth path $\sigma^*:[0,a]\to\TTS$ by choosing an appropriate smooth path in $W_0^*$ from any $\sigma(s)\in W_0$ (picking $s\in[a-3\delta,a)$ appropriately) to $\widetilde\sigma$. It follows that $\sigma^*$ will be an $N$-admissible curve with the desired properties.
\end{proof}

\section{Eliminating Irregular Points}

For a given $N>1$ and $1\leq n \leq N$, define $\zn$ as the set of irregular points $x\in\partial K$ such that there exists an $N$-admissible path $\sigma:[0,a]\to\TTS$ with the following properties:
\begin{enumerate}
	\item $\gamma^+_K(\sigma(s))$ has at most $N$ reflections. \label{condition:non-trapped}
	\item $\gamma^N_K(\sigma(a))$ contains $x$. \label{condition:has-x}
	\item $\gamma^N_K(\sigma(s))$ has at most $n$ irregular points for all $s\in [0,a]$. \label{condition:n-irregular}
	\item $\gamma^N_K(\sigma(s))=\gamma^N_L(\sigma(s))$ for all $s\in [0,a]$. \label{condition:equal}
\end{enumerate}

\begin{proposition}[\cite{GNS2023Preprint1}]\label{prop:first_irregular_not_tangent}
	Suppose that $\sigma\in\TTS$ generates a ray $\gamma_K^N(\sigma)$ with irregular points. Then the first irregular point of $\gamma_K^N(\sigma)$ cannot be a tangent point of reflection.
\end{proposition}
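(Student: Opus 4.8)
The plan is to argue by contradiction: assume the first irregular point $x_1$ of $\gamma:=\gamma_K^N(\sigma)$ is a tangent point of reflection. First I would pin down the structure of $\gamma$ near $x_1$. Since $\sigma\in\TTS$, \Cref{proposition:tangent_twice} tells us $\gamma$ is tangent to $\partial K$ at most once, so $x_1$ is its only tangency and every reflection of $\gamma$ occurring before $x_1$ is transversal; by minimality of $x_1$ among the irregular points, each of these earlier reflections occurs at a regular point, so $\partial K$ and $\partial L$ coincide on a neighbourhood of it. Following \cite{GNS2023Preprint1}, this forces the billiard rays of $K$ and $L$ to agree along the part of $\gamma$ up to, and through a neighbourhood of, $x_1$ — not only for $\sigma$ itself but for every direction sufficiently close to $\sigma$. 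In particular the geodesic segment $g$ of $\gamma$ arriving at $x_1$ is common to the $K$- and $L$-trajectories, and its direction $\omega^-$ is tangent to $\partial K$ at $x_1$.

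Next I would bring in strict convexity. Because $g$ is tangent to the strictly convex $\partial K$ at $x_1$, a one-sided neighbourhood of $x_1$ in $\partial K$ sits on the convex side of $g$, and there is a definite side of $g$ on which geodesics issued from directions near $\omega^-$ meet $\partial K$ transversally near $x_1$. Since the directions whose $K$-ray is tangent to $\partial K$ lie locally in a codimension-one submanifold (\Cref{proposition:p_i_general}), I may pick $\sigma'$ arbitrarily close to $\sigma$ on that side, so that $\gamma_K^N(\sigma')$ gains a transversal reflection off $\partial K$ near $x_1$ and is deflected off $g$ there, while remaining, up to a point well past $x_1$, within the coincidence regime of the previous paragraph. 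Now compare with $L$. If $x_1\notin\partial L$, then — $x_1$ being irregular forces a punctured neighbourhood of $x_1$ in $\partial K$ to miss the closed set $\partial L$, so $\partial L$ is absent from a whole neighbourhood of $x_1$ in $S$ — and $\gamma_L^N(\sigma')$ has no reflection near $x_1$ at all; hence $\gamma_K^N(\sigma')\neq\gamma_L^N(\sigma')$ already near $x_1$, contradicting their coincidence there. If instead $x_1\in\partial L$ but $g$ meets $\partial L$ transversally at $x_1$, the same clash appears already at $\sigma$ itself: $\gamma_L^N(\sigma)$ would reflect at $x_1$ while $\gamma_K^N(\sigma)$ does not, contradicting coincidence through a neighbourhood of $x_1$.

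This leaves the case $x_1\in\partial L$ with $g$ tangent to $\partial L$ at $x_1$ too, which I expect to be the main obstacle and to genuinely require the hypothesis $\sT_K=\sT_L$ rather than local convexity alone. Here I would argue that, for directions near $\sigma$, the exit behaviour of the $K$-trajectory in a fixed small neighbourhood of $x_1$ is determined by the germ of $\partial K$ at $x_1$ (a grazing ray either misses $\partial K$ or clips it once, with a deflection read off from the local shape of $\partial K$), and likewise for $L$; if these two scattering patterns agreed for every direction near $\sigma$ — which is what the coincidence of the rays from \cite{GNS2023Preprint1} forces — then $\partial K$ and $\partial L$ would have the same germ at $x_1$, contradicting that $x_1$ is irregular. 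Hence some nearby $\sigma'$ yields $\gamma_K^N(\sigma')\neq\gamma_L^N(\sigma')$ near $x_1$, the final contradiction. The delicate point throughout is exactly the step, imported from \cite{GNS2023Preprint1}, that goes from ``$\partial K$ and $\partial L$ agree near all reflections before $x_1$'' to ``the $K$- and $L$-rays coincide through a neighbourhood of $x_1$ for all nearby directions''; the rest is bookkeeping with strict convexity and the transversality statements of \Cref{proposition:tangent_twice,proposition:p_i_general}.
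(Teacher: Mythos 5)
First, a point of reference: the paper does not prove this proposition at all --- it is imported verbatim from \cite{GNS2023Preprint1} --- so there is no in-paper argument to compare against, and your proposal has to stand on its own. It does not, because of the step you yourself flag as delicate. From ``every reflection of $\gamma$ before $x_1$ occurs at a regular point'' you conclude that the $K$- and $L$-trajectories coincide up to and through a neighbourhood of $x_1$, for $\sigma$ and for all nearby directions, citing \cite{GNS2023Preprint1}. That implication is false as stated: regularity of the reflection points of the $K$-ray says nothing about what $\partial L$ does along the free geodesic segments between those reflections, where $\partial L$ could have material that $\partial K$ lacks and deflect the $L$-ray long before it reaches $x_1$. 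This is precisely why the present paper never relies on such an implication --- the equality $\gamma_K^N(\sigma(s))=\gamma_L^N(\sigma(s))$ is carried as an explicit hypothesis (condition 4 in the definition of $\zn$) and is propagated only through the sup/continuity arguments and the inductive input $\zN{n-1}=\emptyset$ in \Cref{prop:zn}. Note also that the proposition's statement contains no such coincidence hypothesis, so it must be manufactured from $\sT_K=\sT_L$; your write-up never lets the travelling-time hypothesis do any actual work, and citing the source of the very proposition you are proving for this step is circular. Without the coincidence claim, Cases 1 and 2 produce no contradiction, since there is nothing for $\gamma_L^N(\sigma')$ to be compared against.

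Second, your final case ($g$ tangent to both $\partial K$ and $\partial L$ at $x_1$) rests on the assertion that agreement of the grazing scattering patterns for all directions near $\sigma$ forces $\partial K$ and $\partial L$ to have the same germ at $x_1$. That is a nontrivial local inverse problem in its own right: nearby grazing rays probe only a thin region of $\partial K$ around the tangency, while ``irregular'' merely means the two boundaries fail to coincide on every neighbourhood of $x_1$, which is compatible with their agreeing on such a thin set. So this case is not closed either. The overall shape of the argument (perturb off the tangency and compare the two scattering behaviours, using \Cref{proposition:tangent_twice} and \Cref{proposition:p_i_general} to control tangencies) is reasonable and in the spirit of the source, but the two load-bearing steps are asserted rather than proved.
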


Note that finding an $N$-admissible path satisfying condition \ref{condition:has-x} for any given $x\in\partial K$ is always possible since we can find a $p\in\TTS$ such that $\gamma^+_K(p)$ is non-trapped and contains $x$ (see \Cref{lemma:always_reachable}). Then we simply apply \Cref{proposition:N_admissible}. It is in fact conditions \ref{condition:non-trapped} and \ref{condition:equal} which are difficult to satisfy.

Since our goal is to ultimately show that there exist no irregular points on $\partial K$ (or $\partial L$), we will first show that the sets $\zn$ are empty. Although we will not directly show that every irregular point must be contained in some $\zn$, we will show that any irregular point must lead to some irregular point being contained in $\zn$, leading us to a contradiction. \Cref{prop:zn} is done via induction, and indeed showing that $\zN{n-1}=\emptyset$ implies $\zn=\emptyset$ will directly rely on \Cref{lemma:N-non_tangent,lemma:N-tangent_regular,lemma:N-tangent_irregular}. The idea is to take any $x\in\zn$ and reduce its corresponding $N$-admissible curve to one with fewer irregular points via one (or more) of the three lemmas, thereby reaching a contradiction. To conclude the induction, we show that $\zN{1}$ is empty by a direct application of the sets of traveling times of $K$ and $L$.

\begin{proposition}\label{prop:zn}
	For any given $N>1$ and $1\leq n < N$, we have $\zn = \emptyset$.
\end{proposition}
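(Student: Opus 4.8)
The plan is to argue by induction on $n$: the base case is $\zN{1}=\emptyset$, and the inductive step is that $\zN{m}=\emptyset$ for all $m<n$ forces $\zn=\emptyset$, for $2\le n<N$.

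For the base case I would work directly with the travelling times. If $x\in\zN{1}$ is witnessed by an $N$-admissible path $\sigma:[0,a]\to\TTS$, then $x$ is the unique irregular point of the non-trapped ray $\gamma_K^N(\sigma(a))$, hence a transversal point of reflection by \Cref{prop:first_irregular_not_tangent}; and since $\gamma_K^N(\sigma(a))=\gamma_L^N(\sigma(a))$, this ray is simultaneously a billiard trajectory of $K$ and of $L$, reflecting transversally at $x$ off both $\partial K$ and $\partial L$ with the same incoming and outgoing directions, so $x\in\partial L$ and $T_x\partial K=T_x\partial L$. Examining how the common family of trajectories $\gamma_K^+(\sigma(s))=\gamma_L^+(\sigma(s))$, $s<a$, reflects near $x$ — each such reflection being at a regular point, where $\partial K=\partial L$ — and then varying $\sigma(a)$ within $\TS\setminus(\cup_i\Xi_i\cup\bigcup_i P_i^{(K,N)})$ so that the reflection point near $x$ sweeps out an open subset of $\partial K$ (here the hypothesis $m\ge 3$ enters), together with the reflection identity of \Cref{proposition:reflection_sff}, I would force $\partial K$ and $\partial L$ to agree on a neighbourhood of $x$ in $\partial K$, contradicting the irregularity of $x$. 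Hence $\zN{1}=\emptyset$.

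For the inductive step, fix $2\le n<N$, assume $\zN{m}=\emptyset$ for all $m<n$, and suppose $x\in\zn$ is witnessed by an $N$-admissible path $\sigma:[0,a]\to\TTS$. Let $s_1$ be the first parameter at which $\gamma_K^N(\sigma(s))$ acquires an irregular point; by property (a) of $N$-admissibility and continuity, $s_1>0$, the ray $\gamma_K^N(\sigma(s))$ has no irregular point for $s<s_1$, and $\gamma_K^N(\sigma(s_1))$ has some $k$ of them with $1\le k\le n$. If $k<n$, the restriction of $\sigma$ to $[0,s_1]$ witnesses that an irregular point of $\gamma_K^N(\sigma(s_1))$ lies in $\zN{k}\subseteq\zN{n-1}=\emptyset$, a contradiction, once one checks conditions 1--4 of $\zN{k}$: conditions 1--3 are inherited or immediate, and condition 4 holds for $s<s_1$ because the common trajectory then stays in regular neighbourhoods and at $s=s_1$ by continuity of the $K$- and $L$-billiard flows (after, if needed, a small preliminary modification of $\sigma$ near $s_1$ keeping $\gamma_K^N(\sigma(s_1))$ transversal). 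If $k=n$, then $\sigma|_{[0,s_1]}$ satisfies the hypotheses of one of \Cref{lemma:N-non_tangent,lemma:N-tangent_regular,lemma:N-tangent_irregular}, according as $\gamma_K^N(\sigma(s_1))$ is non-tangent to $\partial K$, tangent at a regular point, or tangent at an irregular point — the last being legitimate since the tangent point is then not the first irregular point, by \Cref{prop:first_irregular_not_tangent}. Applying the relevant lemma (chaining \Cref{lemma:N-tangent_regular} with \Cref{lemma:N-non_tangent} in the middle case) yields an $N$-admissible path $\sigma^*$ whose final ray is not tangent to $\partial K$, contains the first irregular point $x_1$ of $\gamma_K^N(\sigma(s_1))$, and has at most $n-1$ irregular points throughout; verifying that $\sigma^*$ witnesses $x_1\in\zN{n-1}$ then contradicts the inductive hypothesis and finishes the proof.

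The main obstacle is the bookkeeping showing that the path $\sigma^*$ delivered by the reduction lemmas meets all four defining conditions of $\zN{n-1}$ — above all condition 4, that $\gamma_K^N(\sigma^*(s))=\gamma_L^N(\sigma^*(s))$ for every $s$, which is not literally among the conclusions of \Cref{lemma:N-non_tangent,lemma:N-tangent_regular,lemma:N-tangent_irregular}. To obtain it I would trace through those constructions and observe that the modified arcs of the trajectories stay inside the regular neighbourhoods $\Omega_{\zeta_j}$ fixed in their proofs, where $\partial K=\partial L$, while the at most $n-1$ surviving irregular reflection points can be kept at points where the $K$- and $L$-reflections already coincide, by condition 4 for $\sigma$. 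Two subsidiary points also need care: reconciling the reflection count controlled by the lemmas (which bound the reflections of the truncated ray $\gamma_K^N$) with condition 1 of $\zn$ (which concerns the full ray $\gamma_K^+$), using $\minextra\ll\mindist$ so the short tail appended in forming $\gamma_K^N$ creates no further reflection; and confirming that \Cref{lemma:N-tangent_regular}, which by itself only removes a tangency without lowering the irregular-point count, can always be followed by \Cref{lemma:N-non_tangent} so the count genuinely drops to $n-1$.
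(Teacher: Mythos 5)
Your overall architecture---induction on $n$, the supremum isolating the first parameter at which an irregular point appears, the case split according to tangency handled by \Cref{lemma:N-non_tangent,lemma:N-tangent_regular,lemma:N-tangent_irregular}, and a direct travelling-time argument for the base case---is the same as the paper's. You have also correctly located the crux: the reduction lemmas do not assert condition 4, i.e.\ $\gamma_K^N(\sigma^*(s))=\gamma_L^N(\sigma^*(s))$, for the modified path. But your proposed way of closing that gap does not work. You suggest that the at most $n-1$ surviving irregular reflection points of the modified rays ``can be kept at points where the $K$- and $L$-reflections already coincide, by condition 4 for $\sigma$''. Condition 4 for the original path only controls reflections lying on the rays $\gamma_K^N(\sigma(s))$ themselves; the modified rays reflect at new points (that is the whole purpose of the modification), and at an irregular point there is no neighbourhood on which $\partial K$ and $\partial L$ are known to agree, so nothing forces the $K$- and $L$-reflections of the new rays to coincide there. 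Likewise, tracing the regular neighbourhoods $\Omega_{\zeta_j}$ fixed in the lemmas only handles the regular reflections.

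The paper closes this gap by a \emph{second} application of the inductive hypothesis. On the modified interval set $c=\sup\{s^*:\gamma_K^N(\widetilde\sigma(s))=\gamma_L^N(\widetilde\sigma(s))\textrm{ for all } s<s^*\}$. For $s<c$ the common ray has at most $n-1$ irregular points, each of which would lie in $\zN{n-1}=\emptyset$; hence those rays have \emph{no} irregular points, every reflection sits in a regular neighbourhood, and by continuity the reflections at $s=c$ sit in the closures of those neighbourhoods, where $\partial K=\partial L$ and the outward normals agree. This forces equality at $c$ and lets one push past $c$, so $c$ equals the right endpoint, and the same closure argument gives condition 4 there, after which $\widetilde\sigma$ exhibits the first irregular point of $\gamma_b$ as an element of $\zN{n-1}$, a contradiction. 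You need this (or an equivalent) argument. Two smaller points: the hypothesis $m\geq 3$ plays no role in this proposition (it holds for $m\geq 2$ and is invoked in \Cref{thm:one-sided} without that assumption), and \Cref{proposition:reflection_sff} is not needed in the base case---equality of the $i'$-th reflection points for an open set of perturbed rays, obtained from the travelling-time identity together with the fact that all other reflections lie in common regular neighbourhoods, already yields a regular neighbourhood of $x$.
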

\begin{proof}
	First suppose that for all $1\leq i < n$ we have $\zN{i}=\emptyset$. Suppose that $\zn\neq\emptyset$ and take $x\in\zn$. Then there is some $N$-admissible path $\sigma:[0,a]\to\TTS$ such that $\gamma^+_K(\sigma(a))$ has at most $N$ reflections and contains $x$, and for all $s\in[0,a]$ the billiard rays $\gamma^N_K(\sigma(s))=\gamma^N_L(\sigma(s))$ have at most $n$ irregular points. Let
	\[
		b = \sup\{s^*\in[0,a]:\gamma^N_K(\sigma(s))\textrm{ has no irregular points for all }s\in[0,s^*)\}.
	\]
	Note that the ray $\gamma_b = \gamma^N_K(\sigma(b))$ must have some irregular points. Indeed, suppose that is not the case. Then every reflection point of $\gamma_b$ is contained in a regular neighbourhood. Hence for any $b'\in(b,a]$ sufficiently close to $b$, the ray $\gamma^N_K(\sigma(b'))$ cannot have any irregular points, since its reflection points will also be contained in the regular neighbourhoods of $\gamma_b$.  This contradicts the definition of $b$, hence $\gamma_b$ must have some irregular points. Recall that by definition $\gamma_b$ has at most $n$ irregular points. Suppose that $\gamma_b$ has $j\leq n-1$ irregular points. Pick any irregular point $x'\in\gamma_b$. Then $\sigma|_{[0,b]}$ is an $N$-admissible path satisfying the conditions for $x'\in\zN{n-1}$. But by assumption $\zN{n-1}=\emptyset$, thus we reach a contradiction. Hence $\gamma_b$ must have exactly $n$ irregular points.
	
	We note at this point that the first irregular point of $\gamma_b$ cannot be a tangent point of reflection with $\partial K$ by \Cref{prop:first_irregular_not_tangent}. We must now consider two possibilities, either $\gamma_b$ is tangent to $\partial K$ (exactly once) at a regular point, or it is not. Note that in the latter case $\gamma_b$ is either tangent to $\partial K$ at an irregular point, or is not tangent to $\partial K$ at all. We shall reduce the first case to the latter and then continue in the same manner for both cases. Hence, suppose that $\gamma_b$ is tangent to $\partial K$ at a regular point. For brevity, relabel $\sigma|_{[0,b]}$ as $\sigma$. Then $\sigma$ satisfies all the conditions of \Cref{lemma:N-tangent_regular}. Therefore there is an $N$-admissible path $\sigma^*:[0,b]\to\TTS$ and $\delta^*>0$ such that $\sigma^*(s)=\sigma(s)$ for all $s\in[0,b-3\delta^*)$ and $\gamma_K^N(\sigma^*(s))$ contains at most $n$ irregular points for all $s\in[b-3\delta^*,b]$ and $\gamma_K^N(\sigma^*(b))$ is a non-tangent ray containing the first irregular point of $\gamma_b$. Furthermore, note that $\gamma^+_K(\sigma^*(s))$ has at most $N$ reflections for all $s\in[0,b]$. We can now move on to the latter case.
	
	In the latter case, where $\gamma_b$ has no tangential points of reflection on $\partial K$ or has a single tangential reflection with $\partial K$ at an irregular point we simply set $\sigma^* = \sigma|_{[0,b]}$. We may now proceed from both cases in the same manner using $\sigma^*$ as our $N$-admissible path satisfying the conditions of $\zn$. By \Cref{lemma:N-non_tangent,lemma:N-tangent_irregular} we can find $\delta>0$ sufficiently small and construct a new $N$-admissible path $\widetilde\sigma:[0,b]\to\TTS$ such that
	\begin{itemize}
		\item $\widetilde\sigma(s) = \sigma^*(s)$ for all $s\in[0,b-3\delta)$.
		\item $\gamma^+(\widetilde(\sigma(s))$ has at most $N$ reflections for all $s\in[0,b]$.
		\item $\gamma_K^N(\widetilde\sigma(s))$ has at most $n-1$ irregular points for all $[b-3\delta,b]$.
		\item $\gamma_K^N(\widetilde\sigma(b))$ is not tangent to $\partial K$.
		\item The first irregular point of $\gamma_K^N(\widetilde\sigma(b))$ is the same as the first irregular point of $\gamma_b$.
	\end{itemize}
	Recall that $\gamma_K^N(\sigma^*(s))=\gamma_L^N(\sigma^*(s))$ for all $s\in[0,b]$. However, the same may not be true for $\widetilde\sigma$. Note that by construction this can only occur when $s\in[b-3\delta,b]$. Thus we set
	\[
		c = \sup\{s^*\in[b-3\delta,b]:\gamma_K^N(\widetilde\sigma(s))=\gamma_L^N(\widetilde\sigma(s))\textrm{ for all }s\in[b-3\delta,s^*)\}.
	\]
	Suppose that $c<b$, and denote $\gamma_c = \gamma_K^N(\widetilde\sigma(c))$. Now consider any $s\in[b-3\delta,c)$. Since the ray $\gamma^N_K(\widetilde\sigma(s)) = \gamma_L^N(\widetilde\sigma(s))$ has at most $n-1$ irregular points, they must all belong to $\zN{n-1} = \emptyset$. Thus every reflection point of $\gamma_K^N(\widetilde(\sigma(s))$ is a regular point. Let $W_j(s)\subseteq\partial K$ denote the maximal regular neighbourhood of the $j$-th reflection point of $\gamma_K^N(\widetilde\sigma(s))$. Given $s$ is sufficiently close to $c$ it follows that every point of reflection of $\gamma_c$ (apart from possibly the single tangential reflection which $\gamma_c$ is permitted to have) must be contained in $\overline{W_j(s)}$ for some $j\geq N$. By continuity $\overline{W_j(s)}\cap\partial L = \overline{W_j(s)}$ and the outward unit normal fields of $\partial L$ and $\partial K$ must agree on $\overline{W_j(s)}$ for all $j$. Hence $\gamma_K^N(\widetilde\sigma(c))=\gamma_L^N(\widetilde(\sigma(c))$, so $\widetilde\sigma|_{[0,c]}$ satisfies the conditions of $\zN{n-1}$. Therefore $\gamma_c$ can only have regular points of reflections, owing to the fact that $\zN{n-1}=\emptyset$. Let $W_j\subseteq\partial K$ be the regular neighbourhood of the $j$-th reflection point of $\gamma_c$. Then for $s^*>c$ sufficiently close to $c$, every point of reflection of $\gamma_K^N(\widetilde\sigma(s))$ where $s\in[c,s^*]$ (apart from possibly a tangential reflection) must be contained in $W_j$ for some $j\geq N$. Hence by the argument above, $\gamma_K^N(\widetilde\sigma(s)) = \gamma_L^N(\widetilde\sigma(s))$ for all $s\in[b-3\delta,s^*]$. This is a contradiction with the definition of $c$. Hence our assumption that $c<b$ was false. Suppose that $c=b$. Then as in our argument above, $\gamma_K^N(\widetilde\sigma(b)) = \gamma_L^N(\widetilde\sigma(b))$. Therefore $\widetilde\sigma$ satisfies the conditions for $\zN{n-1}$. However, by construction $\gamma_K^N(\widetilde\sigma(b))$ contains $x^*\in\zn$, the first irregular point of $\gamma_b$. This is a contradiction, since this would imply that $x^*\in\zN{n-1}$, but $\zN{n-1}=\emptyset$. Hence it follows that $\zn=\emptyset$ as desired, assuming that $\zN{i}=\emptyset$ for all $1\leq 1 \leq n-1$. To complete our induction we must now show that $\zN{1} = \emptyset$.
	
	Suppose that $x\in\zN{1}$, then there exists an $N$-admissible path $\sigma:[0,a]\to\TTS$ such that
	\begin{enumerate}
	\item $\gamma^+_K(\sigma(s))$ has at most $N$ reflections. \label{condition:non-trapped}
	\item $\gamma^N_K(\sigma(a))$ contains $x$. \label{condition:has-x}
	\item $\gamma^N_K(\sigma(s))$ has at most $1$ irregular point for all $s\in [0,a]$. \label{condition:n-irregular}
	\item $\gamma^N_K(\sigma(s))=\gamma^N_L(\sigma(s))$ for all $s\in [0,a]$. \label{condition:equal}
	\end{enumerate}
	Note that by \Cref{prop:first_irregular_not_tangent} for any $s\in[0,a]$ the irregular point of $\gamma_K^N(\sigma(s))$ (if it exists) cannot be a tangent point of reflection with $\partial K$. Now set
	\[
		b = \sup\{s^*\in[0,a]:\gamma^N_K(\sigma(s))\textrm{ has no irregular points for all }s\in[0,s^*)\}.
	\]
	Then as before, $\gamma_b=\gamma^N_K(\sigma(b))$ must have exactly one irregular point $x^*\in\zN{1}$. Let $x_1\dots x_p$, where $p\leq N$, be the points of reflection of $\gamma_b$ (in order of appearance along $\gamma_b$). Denote by $i^*$ the integer such that $x_{i^*}=x^*$ is the irregular point of $\gamma_b$. Then for each $i\neq i^*$ there is a maximal regular neighbourhood $W_i\subseteq\partial K$ of $x_i$. For $x_{i^*}$, since it is a transversal point of reflection, given $s\in[0,b)$ sufficiently close to $b$, we must have $\gamma_K^N(\sigma(s))$ reflect transversally on $\partial K$ near $x_{i^*}$. Thus for such $s$ there is a maximal regular neighbourhood $W_{i^*}$, since $\gamma_K^N(\sigma(s))$ has no irregular points. It follows that $x_{i^*}\in\overline{W_{i^*}}$ provided $s$ is chosen sufficiently close to $b$. Therefore $x_{i^*}\in\partial L$ since $\overline{W_{i^*}}\subseteq\partial L$ by continuity. 
	Since $\gamma_K^+(\sigma(b))$ has exactly $p$ reflections with $\partial K$ there is a neighbourhood $U\in\TTS$ of $\sigma(b)$ such that for every $u\in U$ the ray $\gamma_K^+(u)$ has either $p$ or $p-1$ reflections. Note that if $\gamma_b$ is not tangent to $\partial K$ then we can choose $U$ such that $\gamma_K^+(u)$ will have exactly $p$ reflections for all $u\in U$. We can shrink $U$ such that for every $u\in U$ the ray $\gamma_K^+(u)$ will only have reflections in $W_i$ for $i\neq i^*$ except near $x_{i^*}$. Given $u=(x_0,\omega_0)\in U$, let $(x_1^{(K)},\omega_1^{'(K)}),\dots (x_{p'}^{(K)},\omega_{p'}^{(K)})$, where $p' = p$ or $p-1$, be the points of reflection and reflected directions of $\gamma^+_K(u')$. Note that provided we choose $U$ sufficiently small $\gamma_L^+(u)$ must have the same number of reflections. Hence let $(x_1^{(L)},\omega_1^{(L)}),\dots (x_{p'}^{(L)},\omega_{p'}^{(L)})$ be the points of reflection and reflected directions of $\gamma^+_K(u')$. Let $\tau>0$ be the travelling time of $\gamma^+_K(u')$, and set $(x_{p'+1},-\omega_{p'+1})=\genflow^{(K)}_\tau(u)$. Now since the sets of travelling times of $K$ and $L$ are equal it follows that $\genflow^{(K)}_\tau(u)=\genflow^{(L)}_\tau(u)$. Denote the reflection of $\gamma^+_K(u')$ near $x_{i^*}$ by $x_{i'}$ for some $1\leq i'\leq p'$.
	Now by construction, for each $i\neq i'$, we have $x'_i\in W_j$ for some $j\neq i^*$. Hence it follows that for $1\leq i<i'$ we must have $(x_{i}^{(K)},\omega_{i}^{(K)})=(x_{i}^{(L)},\omega_{i}^{(L)})$ and similarly for $i>i'$ we must also have $(x_{i}^{(K)},\omega_{i}^{(K)})=(x_{i}^{(L)},\omega_{i}^{(L)})$. It follows that $(x_{i'-1}^{(K)},\omega_{i'-1}^{(K)})=(x_{i'-1}^{(L)},\omega_{i'-1}^{(L)})$ and $(x_{i'+1}^{(K)},\omega_{i'+1}^{(K)})=(x_{i'+1}^{(L)},\omega_{i'+1}^{(L)})$. Therefore it must be the case that $(x_{i'}^{(K)},\omega_{i'}^{(K)})=(x_{i'}^{(L)},\omega_{i'}^{(L)})$. Hence there is a neighbourhood of $x_{i^*}$ where $\partial K = \partial L$, contradicting the fact that $x_{i^*}$ is an irregular point. Thus we can conclude that $\zN{1} = \emptyset$.
	
	Therefore by induction it follows that $\zn = \emptyset$ for all $1\leq n \leq N$, as claimed.
\end{proof}

The trapping set $\Trap{\partial S}^{(K)}$ presents a challenge to our approach, since \Cref{prop:zn} relies directly on the information given by the travelling times to show that one cannot have a single irregular point along a ray. This of course is not possible when the ray in question is trapped, since there is no travelling time data for such rays. When the dimension $m$ of $S$ is greater than 2, it turns out that the complement of the trapping set is path-connected. 

\begin{proposition}\label{prop:ttts-path-connected}
	$\TTTS$ is path-connected when $m\geq 3$.
\end{proposition}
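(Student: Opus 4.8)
The plan is to realise $\TTTS$ as the complement, inside the connected smooth manifold $\TS$ of dimension $2m-2$, of a subset of ``codimension at least two'', and then to conclude path-connectedness from the classical fact that such a subset cannot separate a connected manifold of dimension $\geq 4$. The family $\{\Xi_i\}$ causes no difficulty: it is a locally finite countable family of codimension-$2$ submanifolds, and exactly as in the proof that $\TTS$ is path-connected (used in \Cref{proposition:N_admissible}) any path in $\TS$ may be perturbed off $\cup_i\Xi_i$. Everything therefore reduces to controlling the trapping set, and the central claim I would establish is the following: \emph{$\Trap{\partial S}^{(K)}$ has topological dimension at most $m-1$}, and more precisely, near each of its points, in suitable local coordinates on $\TS$, it is contained in a product $C\times\reals^{m-1}$ with $\dim C=0$. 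Since $m-1\leq 2m-4$ precisely when $m\geq 3$, this is exactly where the dimension hypothesis enters.

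To prove the local structure I would flow each $\sigma\in\Trap{\partial S}^{(K)}$ forward along $\genflow^K$ to its first reflection on $\partial K$; on the open set of vectors in $\TS$ that reach $\partial K$ this is a diffeomorphism onto an open subset of $T_{\partial K} S_K$, so it suffices to describe the forward-trapped set $\widetilde T$ of the billiard ball map $B$ on $T_{\partial K} S_K$, which is a symplectic map of a $(2m-2)$-dimensional manifold. Fix $p\in\widetilde T$. The orbits forward-asymptotic to the (bounded, infinitely reflecting) orbit of $p$ remain in the bounded obstacle region and hence lie in $\widetilde T$; a stable-manifold argument, available because the billiard off $\partial K$ is dispersing, shows that they sweep out an at most $(m-1)$-dimensional ``stable'' piece through $p$, the requisite contraction being supplied by \Cref{proposition:theta_convex} and \Cref{proposition:reflection_sff} and kept under control over distances $\leq\maxdist\minref$ by \Cref{lemma:always_convex} and conditions (\ref{condition:neg_curve})/(\ref{condition:pos_curve}), while symplecticity pins the dimension at $m-1$. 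Transverse to this stable piece $\widetilde T$ is totally disconnected: a non-degenerate arc in $\widetilde T$ transverse to the stable directions lies in the unstable cone and so can be realised (after a short flow) as a piece of a strictly convex geodesic wavefront $\Sigma$ in $S$; by \Cref{proposition:theta_convex} the flow-out of $\Sigma$ stays strictly convex, its convexity being re-established at the near-normal reflections that \Cref{lemma:minimal_reflection_angle} forces to occur infinitely often (\Cref{proposition:reflection_sff}), so the foot-points of the arc separate without bound along the billiard flow, contradicting the fact that every ray in the arc is trapped in the bounded obstacle region. Hence near $p$ one has $\widetilde T\subseteq C\times\reals^{m-1}$ with $\dim C=0$, and pulling back by the above diffeomorphism gives the same description of $\Trap{\partial S}^{(K)}$; here one may also phrase the contradiction through \Cref{proposition:convex_front_collision}, applied at the successive near-normal reflections.

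The conclusion then follows by a general-position argument. The countable sum theorem of dimension theory gives $\dim\Trap{\partial S}^{(K)}\leq m-1$, so for $m\geq 3$ the removed set $\cup_i\Xi_i\cup\Trap{\partial S}^{(K)}$ has topological dimension at most $2m-4=\dim\TS-2$. Given $\sigma_0,\sigma_1\in\TTTS$, join them by a path in $\TTS$ (path-connected, again because the $\Xi_i$ are codimension $2$), cover its compact image by finitely many charts of $\TS$ in each of which $\Trap{\partial S}^{(K)}\subseteq C\times\reals^{m-1}$ with $\dim C=0$, and successively perturb the path within these charts off the trapping set and off the remaining $\Xi_i$; this is possible because a $1$-dimensional path can be pushed off a $0$-dimensional set inside $\reals^{m-1}$ when $m-1\geq 2$, and off a codimension-$2$ submanifold in any case. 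The resulting path lies in $\TTTS$, proving the proposition. The main obstacle is the transversal total-disconnectedness of $\widetilde T$ without any no-eclipse or general-position hypothesis on $K$: one cannot invoke a ready-made uniformly hyperbolic structure, so the estimate must be driven entirely by the Riemannian convex-front machinery (\Cref{proposition:theta_convex}, \Cref{proposition:reflection_sff}, \Cref{lemma:always_convex}) together with the curvature conditions (\ref{condition:neg_curve})/(\ref{condition:pos_curve}), adapting the Euclidean expanding-sphere computations of \cite{MR3359579} to geodesic wavefronts; it is exactly this step that fails when $m=2$, where the trapping set has the same dimension as its own stable direction and genuinely disconnects $\TS$.
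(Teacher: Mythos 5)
Your overall strategy --- bound the topological dimension of the removed set and conclude by a general-position argument, with the threshold $m\geq 3$ entering through a dimension count --- is in the right spirit, and you have correctly located where the hypothesis on $m$ is used. However, the step your argument hinges on, namely that near each trapped point the set $\Trap{\partial S}^{(K)}$ (equivalently, the forward-trapped set of the billiard ball map) is contained in a product $C\times\reals^{m-1}$ with $\dim C=0$, is not established by your sketch, and the sketch as written would fail. Two concrete problems. First, the existence of an $(m-1)$-dimensional stable piece through \emph{every} trapped point presupposes a uniformly hyperbolic structure on the whole trapped set; without a no-eclipse or general-position hypothesis the trapped set contains orbits with tangencies, across which the billiard map is not even continuous, so no stable lamination (and no local product structure) is available off the shelf, and symplecticity cannot ``pin the dimension'' where hyperbolic splittings do not exist. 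Second, your argument for transversal total disconnectedness does not go through: an arc in the trapped set that is merely transversal to the stable direction need not have tangent vector in the unstable cone, and even when it does it is not thereby a piece of a strictly convex geodesic wavefront, so \Cref{proposition:theta_convex} and \Cref{proposition:reflection_sff} cannot be applied to it directly; one would need to control the holonomy between the arc and the convex fronts through its points, which is again exactly the missing hyperbolic structure.

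The paper's proof avoids all of this by never describing the trapped set globally in the $(2m-2)$-dimensional space $\TS$. It first reduces to fibrewise connectivity: vectors whose rays miss a ball containing $K$ are never trapped and connect all base points, so it suffices to join two directions at a fixed $x\in\partial S$ inside the $(m-1)$-dimensional hemisphere $\uT{S}_x\cap\TS$. That hemisphere is canonically realised, via a small geodesic sphere about $x$, as a strictly convex front $\Sigma$ with minimum principal curvature greater than $\globalcurv$, and the trapped never-tangent directions in it are shown to be $0$-dimensional because the itinerary map into $\prod_{j=1}^\infty\{1,\dots,d\}$ is a homeomorphism onto its image (a result imported from \cite{GNS2023Preprint1}); the once-tangent trapped directions are handled by a separate $0$-dimensionality result and the countable sum theorem. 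Removing a $0$-dimensional set from an $(m-1)$-manifold leaves it path-connected exactly when $m-1\geq 2$ --- the same threshold you found, but obtained using only one specific convex transversal per fibre rather than a local product structure for the entire trapping set. To salvage your route you would essentially have to prove the statements you are assuming, which amounts to redoing and strengthening the $0$-dimensionality results of \cite{GNS2023Preprint1} in every unstable-cone direction simultaneously; the fibrewise reduction is what makes the proposition tractable.
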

\begin{proof}
	Denote by $B_r\subseteq S$ the ball of radius $r>0$ with respect to the metric on $S$. Let
	\[
		\widetilde\delta = \inf\{r\in\reals: K\subseteq B_r\}.
	\]
	Given $x\in\partial S$, let $(x,\omega_0)\in\TS$ be any vector such that the ray $\gamma^+_K(x,\omega_0)$ does not intersect $B_{\widetilde\delta}$. To prove the result it is sufficient to show that for any $(x,\omega_1)\in\uT{S}_x\cap \TTTS$ there exists a continuous path $\sigma:[0,a]\to\TTTS$ such that $\sigma(0)=(x,\omega_0)$ and $\sigma(a) = (x,\omega_1)$. Denote the inward unit normal to $\partial S$ at $x$ by $N_S(x)$. Set
	\[
		\widehat\delta = \inf\{\norm{\omega-N_S(x)}:(x,\omega)\in\TS\cap\uT{S}_x\textrm{ and }\gamma^+_K(x,\omega)\cap B_{\widetilde\delta} \neq\emptyset\}.
	\]
	\[
			\Sigma = \{\exp_{x}(\varepsilon\omega):(x,\omega)\in\TS\cap\uT{S}_x\textrm{ and }\norm{\omega-N_S(x)}<\delta\}.
	\]
	\[
			\widetilde\Sigma = \{(y,n(y)):y\in\Sigma\textrm{ where }n(y)\textrm{ is the outward unit normal to $\Sigma$ at }y\}.
	\]
	We pick $\varepsilon>0$ such that the minimum principal curvature of $\Sigma$ is greater than $\globalcurv$ (\cite{MR618545}). We may pick $\delta>\widehat\delta$ such that $\Sigma\cap\partial S=\emptyset$ (possibly shrinking $\varepsilon$). Note that $\Sigma$ (and $\widetilde\Sigma$) are diffeomorphic to an open neighbourhood $U$ of $N_S(x)$ in $\uT{S}_x\cap \TS$, we shall denote this diffeomorphism by $\Phi:\Sigma\to U$. Let $\Sigma(n,i)\subseteq\Sigma$ be the set of points $y\in\Sigma$ such that the ray $\gamma_K^+(y,n(y))$ is trapped, tangent to $\partial K$ exactly once, after $n$ reflections and the tangent point of reflection is on $\partial K_i$. In \cite{GNS2023Preprint1} it is shown that the topological dimension of $\Sigma(n,i)$ is 0. Now denote by $\Sigma_\circ\subseteq\Sigma$ the set of points $y\in\Sigma$ such that the ray $\gamma_K^+(y,n(y))$ is trapped but never tangent to $\partial K$. Given $y\in\Sigma_\circ$ let $(x,\omega_y) = \Phi(y)$. Let $U_y\subseteq U$ be any path-connected neighbourhood of $(x,\omega_y)$ such that $\gamma_K^+(x,\omega)$ is not tangent to $\partial K$ for all $(x,\omega)\in U_y$. Set $V_y = \Phi^{-1}(U_y)$, then we claim that $\dim(V_y\cap\Sigma_\circ)=0$. Recall that $d$ is the number of components of $K$, and set $D_0 = \{1,2,\dots,d\}$ and $D = \prod_{i=1}^\infty D_0$. We endow $D_0$ with the discrete topology and $D$ with the product topology. It is well known that $D$ has topological dimension 0 (\cite{MR0482697}). Denote $V_y^\circ = V_y\cap\Sigma_\circ$ and let $\Psi:V_y^\circ\to D$ be the map defined as follows. $\Psi(y)=(i_1,i_2,\dots)$ if $\gamma^+_K(y,n(y))$ has its $j$-th reflection on the component $\partial K_{i_j}$. It is shown in \cite{GNS2023Preprint1} that $\Psi$ is in fact a homeomorphism onto its image, hence $\dim(V_y^\circ)=0$ as claimed. It follows that $\dim(\Sigma_\circ)=0$ since for every $y\in\Sigma_\circ$ there is an open set $V_y\subseteq\Sigma$ such that $y$ is contained in the corresponding subset $V_y^\circ$ with topological dimension $0$. Therefore by the sum theorem (\cite{MR0482697}) it follows that $\dim(\cup_n\cup_i\Sigma(n,i)\cup\Sigma_\circ)=0$. Now we simply note that $U\cap\Trap{\partial S}^{(K)}\subseteq \cup_i\Xi_i\cup\Phi(\cup_n\cup_i\Sigma(n,i)\cup \Sigma_\circ)$ so $U\setminus\Trap{\partial S}^{(K)}$ is path-connected, as desired, and this completes our proof.
\end{proof}
Note that by using the argument above at a point $x\in\partial K$ we can immediately find the following conclusion, which holds for all $m\geq 2$.
\begin{corollary}\label{lemma:always_reachable}
	For every $x\in\partial K$ there is some $\sigma\in\TTTS$ such that $x\in\gamma^+_K(\sigma)$.
\end{corollary}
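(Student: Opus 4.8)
Here $\gamma_K^+(\sigma)$ denotes the set of points on the billiard ray issued from $\sigma$, so the claim is that every $x\in\partial K$ lies on some non-trapped billiard trajectory joining two boundary points of $S$ and avoiding the exceptional set $\cup_i\Xi_i$. The plan is to re-run the argument in the proof of \Cref{prop:ttts-path-connected}, but now placing the convex front at the point $x\in\partial K$ rather than at a boundary point of $S$. The only genuinely new feature is that a billiard trajectory passing \emph{through} $x\in\partial K$ must reach $\partial S$ on \emph{both} sides of $x$, so we have to control a pair of billiard rays issuing from $x$ instead of a single one.

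Concretely: fix $x\in\partial K_{i^*}$ and let $\mathcal{W}\subseteq\uT{S}_x$ be the open hemisphere of unit vectors at $x$ pointing strictly into $S_K$; this is a connected smooth manifold of dimension $m-1\geq 1$. For $\omega\in\mathcal{W}$ let $\bar\omega\in\mathcal{W}$ be the vector obtained from $\omega$ by reversing its component tangent to $\partial K_{i^*}$ (equivalently, $\bar\omega$ is the reverse of the incoming velocity whose reflection off $\partial K_{i^*}$ at $x$ is $\omega$); the map $\omega\mapsto\bar\omega$ is an involutive diffeomorphism of $\mathcal{W}$. Each $\omega\in\mathcal{W}$ determines the billiard trajectory through $x$ reflecting transversally there with outgoing direction $\omega$, whose forward half is $\gamma_K^+(x,\omega)$ and whose backward half is $\gamma_K^+(x,\bar\omega)$ traversed in reverse. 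It therefore suffices to exhibit one $\omega^*\in\mathcal{W}$ such that (a) neither $\gamma_K^+(x,\omega^*)$ nor $\gamma_K^+(x,\bar\omega^*)$ is trapped, and (b) the resulting trajectory is tangent to $\partial K$ at most once: for then the trajectory runs from some $y_1\in\partial S$ through $x$ to some $y_2\in\partial S$, and if $\sigma$ is the unit vector at $y_1$ pointing into $S$ along it, then $\sigma\in\TS$, $\sigma\notin\Trap{\partial S}^{(K)}$ (the trajectory reaches $\partial S$), $\sigma\notin\cup_i\Xi_i$ (by \Cref{proposition:tangent_twice} and the construction of the $\Xi_i$ in \cite{GNS2023Preprint1}), and $x\in\gamma_K^+(\sigma)$, so $\sigma\in\TTTS$.

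The remaining point is that the set of $\omega\in\mathcal{W}$ violating (a) or (b) is too small to cover $\mathcal{W}$. For (a): forming the convex front $\{(\exp_x(\varepsilon\omega),\text{outward radial normal}):\omega\in\mathcal{W}\}$ and applying the results of \cite{GNS2023Preprint1} recalled in the proof of \Cref{prop:ttts-path-connected}, the set of $\omega$ with $\gamma_K^+(x,\omega)$ trapped has topological dimension $0$ — the part producing trapped rays tangent to $\partial K$ at most once is a countable union of the dimension-$0$ sets $\Sigma(n,i)$, the part producing trapped rays never tangent to $\partial K$ is homeomorphic to a subset of $\prod_{j\geq 1}\{1,\dots,d\}$, which has dimension $0$, and the sum theorem applies; composing with the diffeomorphism $\omega\mapsto\bar\omega$ handles the backward ray as well. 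For (b): the set of $\omega$ whose trajectory is tangent to $\partial K$ twice or more is contained in a countable union of submanifolds of $\mathcal{W}$ of codimension $2$ by an argument parallel to \Cref{proposition:tangent_twice}; this set is empty when $m=2$ and has dimension $m-3<m-1$ when $m\geq 3$. Since $\mathcal{W}$ is a connected $(m-1)$-manifold, the sum theorem shows it is not the union of these countably many subsets of topological dimension $<m-1$, so a suitable $\omega^*$ exists.

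I expect the main obstacle to be bookkeeping rather than a new idea: one must check that the backward direction $\bar\omega$ depends diffeomorphically on $\omega$ so that the topological-dimension input from \cite{GNS2023Preprint1} transports cleanly to the backward ray, and one must confirm that the $\Xi_i$ (as constructed in \cite{GNS2023Preprint1}) are indeed captured by a codimension-$\geq 2$ tangency condition on $\omega$, so that they can be avoided for all $m\geq 2$ and in particular that for $m=2$ there is nothing to avoid there. All the hard topological-dimension facts are already established in \cite{GNS2023Preprint1} and in the proof of \Cref{prop:ttts-path-connected}, so once these routine points are verified the argument closes.
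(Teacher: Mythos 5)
Your argument is correct and is essentially the paper's own: the paper proves this corollary in one line by re-running the dimension-zero trapping argument of \Cref{prop:ttts-path-connected} with the convex front based at $x\in\partial K$, exactly as you propose. Your explicit treatment of the backward ray via the involution $\omega\mapsto\bar\omega$ fills in a step the paper leaves implicit (a non-trapped $\sigma\in\TS$ through $x$ requires both halves of the trajectory to reach $\partial S$), but it is the intended reading rather than a different route.
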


\section{Proof of Theorem 1}

\begin{theorem}\label{thm:one-sided}
	Suppose that $K$ and $L$ have the same set of travelling times, and for every point $x\in\partial K$ there is an $N$-admissible path $\sigma:[0,a]\to\TTS$ such that:
	\begin{enumerate}
		\item $\gamma_K^+(\sigma(a))$ contains $x$.
		\item If $\gamma_K^+(\sigma(s))$ has more than $N$ reflections then it has no irregular points.
	\end{enumerate}
	Then $K\subseteq L$.
\end{theorem}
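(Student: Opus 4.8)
The plan is to show that $\partial K$ contains no irregular point; this gives $K\subseteq L$ at once, since then each compact connected component $\partial K_i$ lies in $\partial L=\bigsqcup_j\partial L_j$, hence, being open and closed in the component $\partial L_j$ that it meets, equals $\partial L_j$, so $K_i=L_j$ (strict convexity pins down on which side the solid obstacle sits), and therefore $K=\bigsqcup_i K_i\subseteq L$.

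So assume some $x\in\partial K$ is irregular, and feed it to the hypothesis to obtain an $N$-admissible path $\sigma:[0,a]\to\TTS$ with $x\in\gamma_K^+(\sigma(a))$ satisfying condition (2) of the theorem. Since $\gamma_K^+(\sigma(a))$ contains the irregular point $x$, condition (2) forces it to have at most $N$ reflections; hence $x\in\gamma_K^N(\sigma(a))$ and that ray is non-trapped. Put $b=\sup\{s\in[0,a]:\gamma_K^N(\sigma(s'))\text{ has no irregular points for all }s'\in[0,s)\}$; then $b\in(0,a]$, using that $\gamma_K^+(\sigma(0))=\gamma_L^+(\sigma(0))$ has no irregular points and that having no irregular points is stable under small moves of $\sigma$ (transversal reflections at regular points stay so, and the at most one grazing reflection an $N$-admissible ray may carry occurs at a regular point and only creates or destroys reflections inside a regular neighbourhood). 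For $s<b$ one has $\gamma_K^N(\sigma(s))=\gamma_L^N(\sigma(s))$, by continuation from $s=0$ along reflections at regular points, where $\partial K$ and $\partial L$ agree to first order. The ray $\gamma_b:=\gamma_K^N(\sigma(b))$ must contain at least one irregular point — otherwise $b$ could be enlarged, or, if $b=a$, it already contains $x$ — and again by (2) it has at most $N$ reflections, so it is non-trapped; write $n\in\{1,\dots,N\}$ for its number of irregular points and $x^*$ for the first one.

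It now suffices to exhibit an $N$-admissible path witnessing that some irregular point belongs to $\z{n'}{N}$ for some $n'<N$, which contradicts \Cref{prop:zn}. If $n\le N-1$, then $\sigma|_{[0,b]}$ already does this: among the defining conditions of $\z{n}{N}$, the second is clear ($x^*\in\gamma_b$), the third holds since the ray carries $0$ irregular points for $s<b$ and $n$ at $s=b$, the first (finiteness of the reflection count) comes from hypothesis (2), and the fourth holds because $\gamma_K^N(\sigma(s))=\gamma_L^N(\sigma(s))$ for $s<b$ and, by continuity of the truncated billiard flows of $K$ and $L$ at $\sigma(b)$, also at $s=b$ (the irregular reflection points of $\gamma_b$ are limits of regular ones, so $\partial K$ and $\partial L$ still agree to first order there). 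Hence $x^*\in\z{n}{N}=\emptyset$, a contradiction. If $n=N$, first apply the appropriate one of \Cref{lemma:N-non_tangent,lemma:N-tangent_regular,lemma:N-tangent_irregular} to $\sigma|_{[0,b]}$ — chosen according to whether $\gamma_b$ is non-tangent, tangent at a regular point, or tangent at an irregular point, the last point necessarily distinct from $x^*$ by \Cref{prop:first_irregular_not_tangent} — obtaining an $N$-admissible $\sigma^*:[0,b]\to\TTS$ that agrees with $\sigma$ on $[0,b-3\delta)$, whose final ray $\gamma_K^N(\sigma^*(b))$ is non-tangent, still contains $x^*$, and carries at most $N-1$ irregular points on $[b-3\delta,b]$. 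The modification may spoil the identity $\gamma_K^N=\gamma_L^N$ near $b$, which is repaired exactly as in the proof of \Cref{prop:zn}: with $c=\sup\{s\in[b-3\delta,b]:\gamma_K^N(\sigma^*(s'))=\gamma_L^N(\sigma^*(s'))\text{ for all }s'<s\}$, the at most $N-1$ irregular points of $\gamma_K^N(\sigma^*(s))$ for $s<c$ all lie in $\z{N-1}{N}=\emptyset$, so that ray has only regular reflections and the identity propagates past $c$; thus $c=b$, $\gamma_K^N(\sigma^*(b))=\gamma_L^N(\sigma^*(b))$, and $\sigma^*$ places $x^*$ in $\z{N-1}{N}=\emptyset$, a contradiction. (In the trivial case $N=1$, where \Cref{prop:zn} is vacuous, one argues directly on a neighbourhood of $\sigma(b)$ in $\TTS$, using $\sT_K=\sT_L$ to match for $K$ and $L$ the single reflection near $x^*$, as in the $\zN{1}=\emptyset$ step of \Cref{prop:zn}.)

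The main obstacle is exactly the one already confronted in \Cref{prop:zn}: lowering the number of irregular points along the path while preserving the identity $\gamma_K^N(\sigma(s))=\gamma_L^N(\sigma(s))$. Each of the three modification lemmas perturbs $\sigma$ only near its endpoint, and the delicate points are that the perturbed ray still threads every regular neighbourhood visited by the original ray (so that no fresh irregular point is introduced) and that the $L$-identity can be lost only on a short final interval, controlled by the supremum $c$. Hypothesis (2) is precisely what keeps us among rays with at most $N$ reflections — equivalently, away from the trapping set $\Trap{\partial S}$ — which is the regime in which the travelling-time equality $\sT_K=\sT_L$ is exploitable through \Cref{prop:zn}; this is also why the argument does not reach $m=2$, where $\Trap{\partial S}$ may disconnect $\TS$.
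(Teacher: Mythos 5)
Your overall strategy is the paper's: locate the first parameter $b$ at which an irregular point appears along the path, show that $\gamma_K^N(\sigma(b))$ is non-trapped with some number $n\geq 1$ of irregular points, and contradict $\zn=\emptyset$ (\Cref{prop:zn}). There is, however, one genuine gap, in your verification of condition (1) in the definition of $\zn$ for the path $\sigma|_{[0,b]}$. You write that ``the first (finiteness of the reflection count) comes from hypothesis (2)''; this is backwards. Hypothesis (2) only says that a ray with \emph{more} than $N$ reflections has no irregular points --- it does not exclude such rays from occurring along the path. For $s<b$ the rays $\gamma_K^+(\sigma(s))$ carry no irregular points and are therefore entirely unconstrained by hypothesis (2): they may have arbitrarily many reflections, and $\sigma(s)$ may even lie in $\Trap{\partial S}^{(K)}$. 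In that case $\sigma|_{[0,b]}$ simply fails condition (1) of $\zn$ and the conclusion $x^*\in\zn$ does not follow. The paper repairs exactly this by introducing $b'=\inf\{s^*<b:\gamma_K^+(\sigma(s))\text{ has at most }N\text{ reflections for all }s\in(s^*,b]\}$, picking $c\in(b',b)$, and reparameterising $\sigma|_{[c,b]}$ as a new $N$-admissible path all of whose rays have at most $N$ reflections (its new initial ray is non-trapped, free of irregular points, and equal for $K$ and $L$ by the same propagation-through-regular-neighbourhoods argument you use); only then may one invoke $\zn=\emptyset$. The fix is short, but it is necessary, and your writeup derives the needed property from the wrong source.

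Two smaller remarks. Your separate treatment of the case $n=N$ (one application of \Cref{lemma:N-non_tangent,lemma:N-tangent_regular,lemma:N-tangent_irregular} followed by the $c$-supremum repair) is a sensible way to stay within the literal statement of \Cref{prop:zn}, which is phrased for $n<N$; the paper instead leans on the arbitrariness of $N$. And your opening reduction (no irregular points on $\partial K$ implies $\partial K\subseteq\partial L$, hence $K\subseteq L$) is fine and marginally stronger than the paper's, which only refutes the existence of a point of $\partial K\setminus\partial L$.
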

\begin{proof}
	Suppose that there exists $x\in\partial K\setminus\partial L$. Then by our hypothesis there is an $N$-admissible path $\sigma:[0,a]\to\TTS$ such that $\gamma_K^+(\sigma(a))$ contains $x$ and whenever $\sigma(s)\in \Trap{\partial S}^{(K)}$ then $\gamma_K^+(\sigma(s))$ has no irregular points. Let
	\[
		b = \sup\{s^*\in[0,a]:\gamma_K^+(\sigma(s))\textrm{ has no irregular points for all } s\in[0,s^*)\}.
	\]
	Note that as in the proof of \Cref{prop:zn}, $\gamma_b=\gamma_K^+(\sigma(b))$ must have at least one irregular point. It therefore follows, owing to our hypothesis, that $\sigma(b)\not\in \Trap{\partial S}^{(K)}$. In fact, it follows that $\gamma_K^+(\sigma(b)$ has at most $N$ reflections. If there is some $s\in(0,b)$ such that $\sigma(s)\in\Trap{\partial S}^{(K)}$ we set
	\[
		b' = \inf\{s^*\in[0,b):\gamma^+_K(\sigma(s))\textrm{ has at most } N \textrm{ reflections for all } s\in(s^*,b]\}.
	\]
	It follows that for $s\in(b',b]$ we have $\sigma(s)\in\TTTS$. Note that by the definitions of $b$ and $b'$ the rays $\gamma_K^+(\sigma(s))$ have at most $N$ reflections and contain no irregular points for all $s\in(b',b)$. Thus pick any $c\in(b',b)$. We can now re-parameterise and relabel $\sigma|_{[c,b]}$ as $\sigma:[0,a']\to\TTTS$ for some $a'\in\reals$. We now set
	\[
		c' = \sup\{s^*\in[0,a']:\gamma_K^N(\widetilde\sigma(s))=\gamma_L^N(\widetilde\sigma(s))\textrm{ for all }s\in[0,s^*)\}.
	\]
	Then repeating the argument in the proof of \Cref{prop:zn}, it follows that $c' = a'$, since the rays $\gamma_K^N(\sigma(s))$ contain only regular points for all $s<a$. Now let $x'$ be the first irregular point of $\gamma_{a'}=\gamma_K^N(\sigma(a'))$, and let $n$ be the number of irregular points along $\gamma_{a'}$. It follows that $x'\in\zn$, but by \Cref{prop:zn} we have $\zn=\emptyset$. Hence we reach a contradiction, and thus $K\subseteq L$.
\end{proof}

In the case where $m\geq 3$, given $x\in\partial K$ we can always find an $N$-admissible path $\sigma:[0,a]\to\TTS$ such that $\gamma_K^+(\sigma(a))$ contains $x$, owing to \Cref{proposition:N_admissible}. Furthermore following from \Cref{prop:ttts-path-connected} we can ensure that $\sigma(s)\in\TTTS$ for all $s\in[0,a]$. Since $N$ is chosen arbitrarily, we may simply set it to be the maximal number of reflections for $\gamma_K^+(\sigma(s))$. It follows that the requirements for \Cref{thm:one-sided} are trivially satisfied. By symmetry the same holds for any $x\in\partial L$. Hence we have proved the following.

\mainthm*


\begin{thebibliography}{GNS23}

\bibitem[CM06]{CHERNOVCHAOTICBILLIARDS}
Nikolai Chernov,  Roberto Markarian.
\newblock {\em Chaotic billiards}, wolumen 127 serii {\em Mathematical Surveys
  and Monographs}.
\newblock American Mathematical Society, Providence, RI, 2006.

\bibitem[CV81]{MR618545}
B.-Y. Chen,  L.~Vanhecke.
\newblock Differential geometry of geodesic spheres.
\newblock {\em J. Reine Angew. Math.}, 325:28--67, 1981.

\bibitem[Eng78]{MR0482697}
Ryszard Engelking.
\newblock {\em Dimension theory}.
\newblock North-Holland Publishing Co., Amsterdam-Oxford-New York, 1978.

\bibitem[GL19]{MR3990606}
Colin Guillarmou,  Thibault Lefeuvre.
\newblock The marked length spectrum of {A}nosov manifolds.
\newblock {\em Ann. of Math. (2)}, 190(1):321--344, 2019.

\bibitem[GNS23]{GNS2023Preprint1}
Tal Gurfinkel, Lyle Noakes,  Luchezar Stoyanov.
\newblock Uniqueness of obstacles in riemannian manifolds from travelling
  times.
\newblock {\em arXiv preprint arXiv:2309.11141}, 2023.

\bibitem[Hir94]{MR1336822}
Morris~W. Hirsch.
\newblock {\em Differential topology}, wolumen~33 serii {\em Graduate Texts in
  Mathematics}.
\newblock Springer-Verlag, New York, 1994.
\newblock Corrected reprint of the 1976 original.

\bibitem[Ika88]{MR949013}
Mitsuru Ikawa.
\newblock Decay of solutions of the wave equation in the exterior of several
  convex bodies.
\newblock {\em Ann. Inst. Fourier (Grenoble)}, 38(2):113--146, 1988.

\bibitem[IS16]{MR3514564}
Joonas Ilmavirta,  Mikko Salo.
\newblock Broken ray transform on a {R}iemann surface with a convex obstacle.
\newblock {\em Comm. Anal. Geom.}, 24(2):379--408, 2016.

\bibitem[Lee13]{LEESMOOTH}
John~M. Lee.
\newblock {\em Introduction to smooth manifolds}, wolumen 218 serii {\em
  Graduate Texts in Mathematics}.
\newblock Springer, New York, wydanie second, 2013.

\bibitem[NS15]{MR3359579}
Lyle Noakes,  Luchezar Stoyanov.
\newblock Rigidity of scattering lengths and travelling times for disjoint
  unions of strictly convex bodies.
\newblock {\em Proc. Amer. Math. Soc.}, 143(9):3879--3893, 2015.

\bibitem[PSU23]{GEOMETRICINVERSEPROBLEMS}
Gabriel~P. Paternain, Mikko Salo,  Gunther Uhlmann.
\newblock {\em Geometric inverse problems---with emphasis on two dimensions},
  wolumen 204 serii {\em Cambridge Studies in Advanced Mathematics}.
\newblock Cambridge University Press, Cambridge, 2023.
\newblock With a foreword by Andr\'{a}s Vasy.

\bibitem[Sin70]{MR0274721}
Ja.~G. Sinai.
\newblock Dynamical systems with elastic reflections. {E}rgodic properties of
  dispersing billiards.
\newblock {\em Uspehi Mat. Nauk}, 25(2 (152)):141--192, 1970.

\bibitem[SUV21]{MR4276284}
Plamen Stefanov, Gunther Uhlmann,  Andr\'{a}s Vasy.
\newblock Local and global boundary rigidity and the geodesic {X}-ray transform
  in the normal gauge.
\newblock {\em Ann. of Math. (2)}, 194(1):1--95, 2021.

\bibitem[Vet84]{MR807598}
A.~Vetier.
\newblock Sinai billiard in potential field (construction of stable and
  unstable fibers).
\newblock {\em Limit theorems in probability and statistics, {V}ol. {I}, {II}
  ({V}eszpr{\'e}m, 1982)}, wolumen~36 serii {\em Colloq. Math. Soc. J{\'a}nos
  Bolyai}, strony 1079--1146. North-Holland, Amsterdam, 1984.

\end{thebibliography}
\end{document}